\DeclareRobustCommand\bigop{%
\mathop{\circledcirc\hspace*{-0.4cm}\vphantom{\sum}\mathpalette\bigop@{\sum}}\slimits@
}
\newcommand{\bigop@}[2]{%
  \vcenter{%
    \sbox\z@{$#1\sum$}%
\hbox{\resizebox{\ifx#1\displaystyle.9\fi\dimexpr\ht\z@+\dp\z@}{!}{$\m@th#2$}}%
  }%
}
\tikzstyle{vertex}=[circle, draw, inner sep=0pt, minimum size=6pt]
\newcommand{\vertex}{\node[vertex]}
\numberwithin{equation}{section}
\theoremstyle{plain} 
\newtheorem{theorem}{Theorem}[section]
\newtheorem{lemma}[theorem]{Lemma}
\newtheorem{proposition}[theorem]{Proposition}
\newtheorem{corollary}[theorem]{Corollary}
\theoremstyle{definition}
\newtheorem{definition}[theorem]{Definition}
\newtheorem{dfn}[theorem]{Definition}
\newtheorem{example}[theorem]{Example}
\newtheorem{remark}{Remark}
\newcommand{\bthe}{\begin{theorem}}
\newcommand{\ethe}{\end{theorem}}
\newcommand{\ben}{\begin{enumerate}}
\newcommand{\een}{\end{enumerate}}
\newcommand{\bit}{\begin{itemize}}
\newcommand{\eit}{\end{itemize}}
\newcommand{\beq}{\begin{equation}}
\newcommand{\eeq}{\end{equation}}
\newcommand{\ble}{\begin{lemma}}
\newcommand{\ele}{\end{lemma}}
\newcommand{\bde}{\begin{definition}\rm}
\newcommand{\ede}{\halmos\end{definition}}
\newcommand{\bco}{\begin{corollary}}
\newcommand{\eco}{\end{corollary}}
\newcommand{\bpr}{\begin{proposition}}
\newcommand{\epr}{\end{proposition}}
\newcommand{\brem}{\begin{remark}\rm}
\newcommand{\erem}{\end{remark}}
\newcommand{\bproof}{\begin{proof}}
\newcommand{\eproof}{\end{proof}}
\newcommand{\bexam}{\begin{example}\rm}
\newcommand{\eexam}{\halmos\end{example}}
\newcommand{\bfi}{\begin{fig}}
\newcommand{\efi}{\end{fig}}
\newcommand{\btab}{\begin{tab}}
\newcommand{\etab}{\end{tab}}
\newcommand{\beao}{\begin{eqnarray*}}
\newcommand{\eeao}{\end{eqnarray*}\noindent}
\newcommand{\balo}{\begin{align*}}
\newcommand{\ealo}{\end{align*}}
\newcommand{\balm}{\begin{align}}
\newcommand{\ealm}{\end{align}\noindent}
\newcommand{\beam}{\begin{eqnarray}}
\newcommand{\eeam}{\end{eqnarray}\noindent}
\newcommand{\barr}{\begin{array}}
\newcommand{\earr}{\end{array}}
\newcommand{\E}{\mathbb{E}}
\newcommand{\M}{\mathbb{M}}
\newcommand{\N}{\mathbb{N}}
\renewcommand\P{\mathbf{P}}
\newcommand{\R}{\mathbb{R}}
\def\C{\mathbb{C}}
\def\CA{\mathbb{CA}}
\def\bE{\mathbf E}
\def\MRV{\mathcal{MRV}}
\def\RV{\mathcal{RV}}
\def\binfty{\boldsymbol \infty}
\def\bCA{\mathbb{CA}}
\def\bzero{\boldsymbol 0}
\def\bone{\boldsymbol 1}
\def\bA{\boldsymbol A}
\def\bV{\boldsymbol V}
\def\bX{\boldsymbol X}
\def\bZ{\boldsymbol Z}
\def\binfty{\boldsymbol \infty}
\def\bv{\boldsymbol v}
\def\bx{\boldsymbol x}
\def\by{\boldsymbol y}
\def\bz{\boldsymbol z}
\def\ba{\boldsymbol a}
\def\be{\boldsymbol e}
\newcommand{\tto}{{t\to\infty}}
\newcommand{\al}{{\alpha}}
\newcommand{\halmos}{\quad\hfill\mbox{$\Box$}}
\newcommand{\vague}{\stackrel{\lower0.2ex\hbox{$\scriptscriptstyle
                    \it{v} $}}{\rightarrow}}
\newcommand{\weak}{\stackrel{\lower0.2ex\hbox{$\scriptscriptstyle
                    \it{w} $}}{\rightarrow}}
\newcommand{\what}{\stackrel{\lower0.2ex\hbox{$\scriptscriptstyle
                    \it{\hat{w}} $}}{\rightarrow}}
\newcommand{\eqdis}{\stackrel{\lower0.2ex\hbox{$\scriptscriptstyle
                    \mathrm{d}$}}{=}}
\newcommand{\distr}{\stackrel{\lower0.2ex\hbox{$\scriptscriptstyle
                    \it{d} $}}{\rightarrow}}
\definecolor{darkred}{RGB}{139,0,0}
\definecolor{darkgreen}{RGB}{0,139,0}
\newcommand{\DAS}[1]{{\color{darkred} #1}}
\begin{document}

\begin{frontmatter}
\title{Tail probabilities of random linear functions of regularly varying random vectors}
\runtitle{Linear functions of regularly varying vectors}

\begin{aug}
  \author{\fnms{Bikramjit}  \snm{Das} \thanksref{t1}\ead[label=e1]{bikram@sutd.edu.sg}}
 \address{Engineering Systems and Design\\ Singapore University of Technology and Design\\ 8 Somapah Road\\ Singapore 487372, Singapore\\  \printead{e1}}
 \affiliation{Singapore University of Technology and Design}
  \and
  \author{\fnms{Vicky} \snm{Fasen-Hartmann}\ead[label=e2]{vicky.fasen@kit.edu}}
  \address{Institute for Stochastics\\ Karlsruhe Institute of Technology\\ Englerstrasse 2\\ 76131 Karlsruhe, Germany\\    \printead{e2}}
  \affiliation{Karlsruhe Institute of Technology}
  \and
  \author{\fnms{Claudia} \snm{Kl\"uppelberg} \ead[label=e3]{cklu@ma.tum.edu}}
  \address{Center for Mathematical Sciences\\Technical University of Munich\\ Boltzmanstrasse 3\\ 85748 Garching, Germany\\  \printead{e3}}
 \affiliation{Technical University of Munich}
\thankstext{t1}{B. Das was partially supported by the MOE Academic Research Fund Tier 2 grant MOE2017-T2-2-161 on ``Learning from common connections in social networks".}

  \runauthor{B. Das, V. Fasen, and C. Kl\"uppelberg}
\end{aug}

\begin{abstract}
We provide a new extension of Breiman's Theorem on computing tail probabilities of a product of random variables to a multivariate setting.
In particular, we give a complete characterization of regular variation on cones in $[0,\infty)^d$ under random linear transformations.
This allows us to compute probabilities of a variety of tail events, which classical multivariate regularly varying models would report to be asymptotically negligible.
{We illustrate our findings with  applications to risk assessment in financial systems and reinsurance markets under a bipartite network structure.}


\end{abstract}

\begin{keyword}[class=AMS]
\kwd[Primary ]{60B10} 
\kwd{60F10} 
\kwd{60G70} 
\kwd{90B15}  
\end{keyword}

\begin{keyword}
\kwd{bipartite graphs}
\kwd{heavy-tails}
\kwd{multivariate regular variation}
\kwd{networks}
\end{keyword}

\end{frontmatter}

\section{Introduction}\label{sec:intro}

In this article we study the probability of tail events for random linear functions of regularly varying random vectors.
Throughout all random elements are defined on the same probability space $(\Omega,\mathcal{F},\P)$.
Suppose $\bZ$ is a non-negative random vector with {\it{multivariate regularly varying tail distribution}} on $\E_d^{(1)}:=\left[0,\infty\right)^{d}\setminus\{\bzero\}$ with index $-\alpha_1\le 0$, denoted as $\MRV(\alpha_{1},\E^{(1)}_d)$. A precise definition of this notion is given in Section \ref{sec:mrv}. Furthermore, let $\bA$ be a $q\times d$ random matrix independent of $\bZ$. For $\bX= \bA\bZ$, our goal is to  find $\P(\bX\in tC )$ for large values of $t$ and a wide variety of sets $C\subset \left[0,\infty\right)^q$.

 A classical result on the tail behavior of a product of random variables, now known as Breiman's Theorem, states that given independent non-negative random variables $Z$ and $A$, where $Z$ has a univariate regularly varying tail distribution with index $-\alpha \le 0$ and $\bE[A^{\alpha+\delta}]<\infty$ for some $\delta>0$, the tail distribution of $X=AZ$ is also regularly varying with index $-\alpha$. More precisely,
  \begin{align}\label{eq:breiman}
  \P(AZ>t) \sim \bE[A^{\alpha}]\P(Z>t), \quad\text{as} \quad t\to \infty.
  \end{align}
This was stated first in \citet{breiman:1965} for $\alpha\in  [0,1]$ and established for all $\alpha\ge0$ in \citet{cline:samorodnitsky:1994}. The inherent applicability of this result to stochastic recurrence equations and portfolio tail risk computations has lead to a few generalizations in the past decades. A generalization of Breiman's Theorem by relaxing the assumption of independence of  the random variables $A$ and $Z$ to {\it{asymptotic independence}}  was provided in \citet{maulik:resnick:rootzen:2002}. On the other hand, a weakening of the conditions on $A$ such that \eqref{eq:breiman} holds was given in \citet{denisov:zwart:2007}.

 A vector-valued generalization of \eqref{eq:breiman} was obtained in \citet[Proposition A.1]{basrak:davis:mikosch:2002a} where the $d$-dimensional non-negative random vector $\bZ\in \MRV({\alpha_{1}},\E_d^{(1)})$ for $\alpha_1\ge 0$ is independent of a $q\times d$-dimensional  random matrix $\bA$ with $\bE\|\bA\|^{\al_1+\delta} <\infty$ for some $\delta>0$. The result states that in such a case $\bX=\bA\bZ \in \MRV(\alpha_{1},\E_q^{(1)})$ where $\E_{q}^{(1)} = \left[0,\infty\right)^{q}\setminus \{\bzero\}$. A generalization of this result with respect to the dependence and joint regular variation assumptions on $(\bA,\bZ)$ was given in \citet{fougeres:mercadier:2012}. On the other hand,  \citet[Theorem 2.3]{janssen:drees:2016}   generalized  Proposition A.1 in \citet{basrak:davis:mikosch:2002a} so that one may compute probabilities of tail sets $C$ contained in $\E_q^{(q)} = (0,\infty)^q$ when $q=d$ and $\bA$ is of full rank (and certain other conditions).
 {For $\bZ\in \MRV({\alpha_{d}},\E_d^{(d)})$} 
 they show that $\bX=\bA\bZ \in \MRV(\alpha_{d},\E_d^{(d)})$. 

 Consider the following example to fix ideas in this setting. Let $\bZ=(Z_1,\ldots,Z_d)^{\top}$ be comprised of iid (independent and identically distributed) Pareto random variables with $\P(Z_{i}>z)=z^{-\alpha}$ for $z>1$, where $\alpha>0$, and let $\bA$ be a $d\times d$ random matrix independent of $\bZ$ satisfying the conditions for both   \citet[Proposition A.1]{basrak:davis:mikosch:2002a} and  \citet[Theorem 2.3]{janssen:drees:2016}. Then $\bX=\bA\bZ\in\MRV(\alpha,\E_{d}^{(1)})$ and $\bX=\bA\bZ\in\MRV(d\alpha,\E_{d}^{(d)})$. Hence for sets of the form $ [\bzero,\bx]^{c}$ and $(\bx,\infty)$ with $\bx>0$, we are able to compute for $t\to\infty$:
 \begin{align}
 & \P(\bA\bZ \in t  [\bzero,\bx]^{c})  \sim  t^{-\alpha}\, \bE[\mu_{1}(\bA\bZ\in[\bzero,\bx]^{c})], \label{firsteq}\\
 & \P(\bA\bZ \in t  (\bx,\binfty)) \sim  t^{-d\alpha}\,\bE[\mu_{d}(\bA\bZ\in (\bx,\infty))], \label{secondeq}
 \end{align}
for some measures $\mu_{1}$ and $\mu_{d}$ to be elaborated on later.
Moreover, the expectations on the right hand side of both \eqref{firsteq} and \eqref{secondeq} are non-trivial and finite; hence our probability estimates are valid. Thus \eqref{firsteq} allows us to compute probabilities of events described as ``at least one of the components of $\bX$ is large'', whereas  \eqref{secondeq} allows us to compute probabilities of events described as ``all components of $\bX$ are large''. Natural questions to inquire of here would be, what if we want to compute such probabilities when the matrix $\bA$ is not invertible, or perhaps $q\neq d$. We may also wish to find the probability that ``at least three of the components of $\bX$ are large'' or ``exactly two of the components of $\bX$ are large". We can check that, although a probability computation akin to \eqref{firsteq} is possible in such a case, it will often render the measure $\mu_{1}$ and hence, the right hand side of \eqref{firsteq} to be zero. On the other hand, \eqref{secondeq} will fail to answer such a question if either $q\neq d$ or the particular set of concern does not have all components to be large. To the best of our knowledge, \eqref{firsteq} and \eqref{secondeq} are the only results that compute probabilities of extreme sets for random linear functions of regularly varying vectors. In our work, we provide a generalization of Breiman's Theorem which allows us to compute such probabilities for more general extreme sets. For example, in this particular setting of $\bZ$ being iid Pareto, our results show that
  \begin{align}\label{eq:breinew}
 \P(\bA\bZ \in tC) \sim  t^{-i\alpha}\, \bE_i^{(k)}\left[{\mu}_{i}(\bA^{-1}(C))\right], \quad \quad t\to\infty.
   \end{align}
   where the index $i\in \{1,\ldots,d\}$ depends on the structure of the matrix $\bA$ and the set $C$, and $\bE_i^{(k)}$ represents an expectation over an appropriate subspace of the probability space; see Section \ref{subsec:mainBreiman} for the definition. Finding the correct exponent $i$ under a general set-up forms the basis of this paper.

 {\it{Further related literature:}}  A few other publications have also exhibited interesting applications and generalizations of Breiman's Theorem, albeit in different contexts. In \citet{jessen:mikosch:2006}, the authors provide partial converses to Breiman's Theorem: assuming $A$ and $Z$ to be non-negative independent random variables, if $AZ$ has a regularly varying tail distribution, they find conditions when $Z$ will also have a regularly varying tail distribution.   In \citet{tillier:wintenberger:2018} we find an extension of Breiman's multivariate result to vectors of random length, determined for instance by a Poisson random variable. In a more general setting,  \citet{chakraborty:hazra:2018}, extend Breiman's result for multiplicative Boolean convolution of regularly varying measures. Finally, the monograph  \citet{buraczewski:damek:mikosch:2016} provides many applications of Breiman's result  and its generalizations in the area of stochastic modeling with power-law tails.

 Our interest in computation of  probabilities of the form \eqref{eq:breinew} is motivated by a wide range of applications in mind. Regularly varying tail distributions have been used to model power-law tail behavior in stochastic models in applications including hydrology, finance, insurance, telecommunication, social networks and many more. A regularly varying random vector like $\bZ\in \left[0,\infty\right)^{d}$ can be used to represent {investment risks} from multiple stocks (in finance) or losses  pertaining to different insurance companies (in an  insurance context). In such applications a $q\times d$ random matrix $\bA$ represents randomly weighted choices of portfolios $\bZ$ of a group of stockholders or business entities,  or, randomly weighted exposures of insurance companies to losses, respectively.  Thus a common quantity of interest to compute here is $\P(\bA\bZ\in tC)$ for tail sets $C$ representing a variety of worst case scenarios relating to multiple portfolios, or bankruptcy or loss for multiple insurers.

 Our paper is organized as follows. We provide a summary of  notations used in the paper in Section \ref{subsec:notation} to finish up the introduction. In Section \ref{sec:mrv}, we discuss  multivariate regular variation with $\M$-convergence in different subspaces of $\left[0,\infty\right)^{d}$ which provides a set up for the main result of the paper. Our main result extending Breiman's Theorem is developed in Section \ref{sec:Breiman}. In Section \ref{sec:applications}, we provide applications of the model in the context of bipartite networks, where $q$ agents can be exposed to the risk of $d$ objects where $\bZ\in\left[0,\infty\right)^d$ are the risks of the objects. The exposures of the agents is represented by $\bX=\bA\bZ$ and illustrate the behavior of tail risk of the agents for possible structures of the weighted adjacency matrix $\bA\in\left[0,\infty\right)^{q\times d}$. We conclude with indications to future directions of research in Section \ref{sec:concl}.

\subsection{Notations}\label{subsec:notation}
Various notations and concepts used in this paper are summarized in this section.
Vector operations are always understood component-wise, e.g., for vectors $\bx=(x_1,\ldots,x_d)^{\top}$ and $\by=(y_1,\ldots,y_d)^{\top}$, $\bx\le \by$ means $x_i\le y_i$ for all $i$.  For a constant $t\in \R$ and a set $C\subseteq \R^{d}$, we denote by $tC:= \{t\bx: \bx\in C\}$.
Further notations are tabulated below. References are provided wherever applicable.

$$
\begin{array}{ll}
\RV_\beta & \text{Regularly varying functions with index $\beta\in\R$; that
           is, functions $f:\mathbb{R}_+\mapsto \mathbb{R}_+$}\\
{}& \text{satisfying $\lim_{t\to\infty}f(tx)/f(t)=x^\beta,$ for $x>0$; see  \cite{resnickbook:2008};}\\
{}&\text{\cite{bingham:goldie:teugels:1989,dehaan:ferreira:2006}; for details.}\\[2mm]
V\in\RV_{-\al} &    \text{ A random variable {$V$} with distribution function $F_V$ is regularly varying} \\
{}& \text{(at infinity) if $\overline{F}_V:= 1-F_V \in \RV_{-\alpha}$  for some $\alpha\ge 0$.}\\[2mm]
\R_{+}^{d} & [0,\infty)^{d} \text{ for dimension $d\ge 1$.}\\[2mm]
v^{(1)}, \ldots, v^{(d)}   & \text{Order statistics of $\bv=(v_1,\ldots,v_d)^{\top}\in \R_+^d$ such that $v^{(1)}\ge v^{(2)}\ge \ldots \ge v^{(d)}$.}  \\[2mm]
\CA_d^{(i)} & \{\bv\in \R^{d}_+: v^{(i+1)}=0\}, \text{ $i=1,\ldots,d-1$; also define } \CA_d^{(0)} = \{\bzero\}.\\[2mm]
\M(\mathbb{C}\setminus \mathbb{C}_0) & \text{The set of all non-zero
measures on $\mathbb{C}\setminus \mathbb{C}_0$ which are finite on Borel subsets }\\&\text{bounded away from  $\mathbb{C}_0$.}
\\[2mm]
\mu_n\to \mu & \text{Convergence in $\M(\mathbb{C}\setminus
  \mathbb{C}_0)$;} 
  \text{ see Section~\ref{Def:M convergence} and
   \cite{das:mitra:resnick:2013};} \\
   {}& \text{\cite{hult:lindskog:2006a,
  lindskog:resnick:roy:2014} for details.}\\[2mm]
\end{array}
$$

$$
\begin{array}{ll}
\MRV(\alpha,b,\mu,\E) & \text{Multivariate regular variation on the space $\E=\C\setminus\C_0$, where $\C$ and $\C_0$}\\
 & \text{are closed cones in $\R_+^d$. Here $-\alpha \le 0$ is the index of regular variation,}\\
  & \text{$b$ is the scaling function, and $\mu$ is the limit measure. We often omit one}\\
  & \text{or more of the arguments. See Definition \ref{def:mrv} for details.}\\[2mm]
 \tau_{q}^{(k)}(\bx) &  d(\bx,\CA^{(k-1)}_{q}) = x^{(k)}, \text{ the distance between $\bx\in \R_+^q$ and $\CA_q^{(k-1)}$,}  \\
& \text{where }  d(\bx,\by)=||\bx-\by||_\infty; \text{ see Section \ref{sec:Breiman} for details.}
\\[2mm]
   \tau_{d,q}^{(k,i)}(\bA) & \sup\limits_{\bz \in \E_d^{(i)}} \frac{\tau_{q}^{(k)} (\bA \bz)}{\tau_{d}^{(i)}(\bz)}  \text{ for  $\bA\in\R_+^{q\times d}$ ; see Section \ref{sec:Breiman} for details.}\\[2mm]
\|\cdot\| & \text{For $\bx\in\R^d$,  $\|\bx\|$ denotes some vector norm and for a matrix $\bA\in\R^{q\times d}$,  }\\
& \text{$\|\bA\|$ denotes the corresponding operator norm.}
\end{array}
$$

\section{Multivariate regular variation and convergence concepts}\label{sec:mrv}

We use the notion of $\M$-convergence of measures to define multivariate regular variation on Euclidean spaces and subsets thereof; see \citet{das:mitra:resnick:2013,lindskog:resnick:roy:2014} for details.
In particular, we investigate regular variation of a random vector $\bX$, which is given as $\bX=\bA\bZ$, where $\bZ\in \R_+^d=\left[0,\infty\right)^d$ is multivariate regularly varying with index $-\al\le 0$ and $\bA$ is a $q\times d$ random matrix independent of $\bZ$ such that $\bE[\|\bA\|^{\al+\delta}]<\infty$ for some $\delta>0$ and some operator norm $\|\cdot\|$ for matrices.

Our goal is to obtain a complete picture concerning linear functions $\bX=\bA\bZ$ which possess multivariate regular variation on a sequence of subspaces of $\R_+^d$ (also called hidden regular variation),
thus extending results from \citet{basrak:davis:mikosch:2002a,janssen:drees:2016}.
The particular choice of subsets where we seek regular variation are natural, depending on the type of extreme sets for which we seek to find probabilities; see \citet{mitra:resnick:2011hrv} for examples.
The necessary definitions and results formulated with respect to $\M$-convergence are discussed below.

Consider the space $\R_{+}^{d}$ endowed with a metric $d(\bx,\by)$ satisfying for some $c>0$
\begin{equation}\label{eq:Metricscale}
d(c\bx, c\by)=c\,d(\bx,\by), \qquad   (\bx,\by) \in \R_+^d \times \R_+^d.
\end{equation}
Any metric $d$  defined by a norm as $d(\bx,\by)=\|\bx-\by\|$ will always satisfy \eqref{eq:Metricscale}.  {In this paper, we use the sup-norm $d(\bx,\by)=||\bx-\by||_\infty$ as our choice of metric $d$, since the distance of a point $\by \in \R_+^d$ to a specific closed set can be represented as an order statistic of the co-ordinates of $\by$; see \eqref{eq:taukq}.}

Recall that a cone $\mathbb{C} \subset \R_+^d $ is a set which is closed under
scalar multiplication: if $\bx \in\mathbb{C}$ then $c\bx \in \mathbb{C}$ for $c>0$.
A closed cone of course, is a cone which is a closed set in $\R_+^d$.
Now we define multivariate regular variation using convergence of measures
on a closed cone $\mathbb{C}\subset {\R^d_+}$ with a closed cone  $\mathbb{C}_0 \subset  \mathbb{C}$ deleted.
Moreover, we say that a  subset $\Lambda \subset \mathbb{C} \setminus \mathbb{C}_0$ is {\it
  bounded away from\/} $\mathbb{C}_0$ if $d(\Lambda,\mathbb{C}_0)=\inf\{d(\bx,\by):\bx\in\Lambda,\by\in\C_0 \}>0$. The class of Borel measures on $\C\setminus\C_0$ that assign finite measure to all Borel sets $B\subset \C\setminus\C_0$, which are bounded away from $\C_0$, is denoted by $\M(\mathbb{C}\setminus \mathbb{C}_0)$.

{In this paper, regular variation on cones is defined using $\M$-convergence, which is slightly different from vague convergence which has been traditionally used in multivariate regular variation. Reasons for the preference of $\M$-convergence are presented in \citet[Remark~1.1]{das:resnick:2015}; see also \citet{das:mitra:resnick:2013, lindskog:resnick:roy:2014}.
In the space $\E_d^{(1)}=\R^d_+\setminus\{\bzero\}$ the notions of vague convergence and $\M$-convergence are identical.

\begin{definition} \label{Def:M convergence}
Let $\C_0\subset \C\subset \R_+^d$ be closed cones containing $\bzero$.
 Let $\mu_n,\mu$ be Borel measures on $\M(\mathbb{C}\setminus \mathbb{C}_0)$ and $\int f\,d\mu_n\to\int f\,d\mu$ as $n\to\infty$ for any bounded, continuous, real-valued function $f$ whose support is bounded away from $\C_0$, then we say {\em $\mu_n$ converges to $\mu$ in $\M(\mathbb{C}\setminus \mathbb{C}_0)$}, and write $\mu_n\to\mu$ in $\M(\C\backslash\C_0)$.
\end{definition}

\begin{dfn}
\label{def:mrv}
Let $\C_0\subset \C\subset \R_+^d$ be closed cones containing $\bzero$.
A random vector $\bV=(V_1,\ldots,V_d)^{\top} \in \C$ is {\it{regularly varying}} on $\mathbb{C}
\setminus \mathbb{C}_0$ if there exists a  function
$b(\cdot) \in \RV_{1/\alpha}$ for {$\alpha \ge 0$}, called the {\it scaling
  function\/}, and a non-null (Borel) measure $\mu(\cdot)
\in \M(\mathbb{C}
\setminus \mathbb{C}_0)$ called the {\it limit or tail measure\/}
{such that}
\begin{equation*}
t\P(\bV/b(t) \in \,\cdot \,) \to \mu(\cdot), \quad  t \to\infty,
\end{equation*}
in $\M(\mathbb{C}\setminus \mathbb{C}_0)$.
We write $\bV \in \MRV(\alpha, b, \mu, \mathbb{C}\setminus \mathbb{C}_0)$ or,  $\bV \in \MRV(\alpha,  \mu, \mathbb{C}\setminus \mathbb{C}_0)$ if the scaling function is contextually irrelevant. If  $\mathbb{C}\setminus \mathbb{C}_0= \left[0,\infty\right)^d\setminus\{\bzero\}=:\E_{d}^{{(1)}}$, we simply write  $\bV \in \MRV(\alpha, \mu,\E_{d}^{(1)})$ or $\bV \in \MRV(\alpha, \mu)$.
\end{dfn}
For $\alpha>0$, a possible choice of $b$ is given by using $t\P(\max\{V_1,\dots,V_d\}>b(t))\to 1$ as $t\to\infty$.
Since $b \in \RV_{1/\alpha}$, the limit measure $\mu(\cdot) $ has a scaling property:
\begin{equation*}
\mu(c \;\cdot\,) =c^{-\alpha} \mu(\,\cdot\,),\qquad c>0.
\end{equation*}




\subsection{Regular variation on a sequence of subspaces}\label{subsec:hrv}

We define regular variation on a specific sequence of subspaces of $\R^{d}_{+}$ following \citet{mitra:resnick:2011hrv}.
For $\bv\in \R^{d}_+$, write $\bv= (v_{1}, \ldots,v_{d})^{\top}$.
Moreover, the order statistics for any vector $\bv\in\R^d_+$ is defined as
\beao
v^{(1)}\ge v^{(2)}\ge \ldots \ge v^{(d)},
\eeao
where $v^{(i)}$ denotes the $i$-th largest component of $\bv$.
First we define closed sets which we think of as a union of co-ordinate hyper-planes of various dimensions in $\R^d_+$. Let $\CA_d^{(0)} := \{\bzero\}$ and for $1\le i \le d-1$ define
\begin{align*}
    \CA_d^{(i)} & = \bigcup_{1\le j_{1}<\ldots<j_{d-i} \le d}  \{\bv\in \R^{d}_+: v_{j_{1}}=0,\ldots, v_{j_{d-i}}=0\}
                 := \{\bv\in \R^{d}_+: v^{(i+1)}=0\}.
\end{align*}
Here $\bCA^{(i)}_{d}$ represents the union of all $i$-dimensional co-ordinate hyperplanes in $\R^{d}_{+}$.  Also define $\CA_d^{(d)}:=\{\bv\in \R^d_+: v^{(d)}>0\}$.
Now define the following sequence of subcones of $\R^d_+$: 
\begin{align}
\E^{(1)}_{d}  := &\;\R^{d}_+ \setminus \CA_d^{(0)} =  \R_+^{d} \setminus \{\bzero\} =   \{\bv\in \R^d_+: v^{(1)}>0\}, \label{def:ed1}\\
\E^{(i)}_{d}  := &\;  \R^d_+ \setminus \bCA^{(i-1)}_{d} = \; \{\bv\in \R^d_+: v^{(i)}>0\}, \quad\quad 2\le i \le d. \label{def:edd}
\end{align}
  Hence $\E^{(1)}_{d}$ is the non-negative orthant with $\{\bzero\}=\bCA^{(0)}_{d}$ removed, $\E^{(2)}_{d}$ is the non-negative orthant with all one-dimensional co-ordinate axes removed, $\E^{(3)}_{d}$ is the non-negative orthant with all two-dimensional co-ordinate hyperplanes removed, and so on.
 Clearly, we have
 \beao
\E^{(1)}_{d} \supset \E^{(2)}_{d} \supset \ldots \supset \E^{(d)}_{d}.
\eeao
Note that according to our definition $\E_d^{(d)}=\CA_d^{(d)}.$
We also define for $i=1,\ldots, d$,
 \begin{align}\label{eq:cminusc}
  \bCA_d^{(i)}\backslash \bCA_d^{(i-1)}  & = \{\bv\in \R_+^d: \text{exactly $i$ co-ordinates of $\bv$ are positive}\}
                      =: \bigcup_{j=1}^{\binom{d}{i}} \widetilde \bCA_d^{(i)}(j),
 \end{align}
 where $\widetilde\bCA_d^{(i)} (j)$ denotes the $j$-th $i$-dimensional co-ordinate hyperplane in $\R_+^d$ with $i$ positive and $d-i$ zero co-ordinates in some ordering of the hyperplanes.
 We note in passing that
 \[\CA_d^{(d)} =\E_d^{(d)}= \widetilde{\CA}_d^{(d)}(1).\]

 A recipe for finding regular variation in the above sequence of cones can be devised as follows.
 To start with, suppose $\bV \in \MRV(\alpha_1,b_1,\mu_1, \E^{(1)}_d)$ with $\alpha_1>0$.
\begin{enumerate}[(1)]
\item
 If $\mu_1(\E^{(d)}_d)>0$, we seek no further regular variation on cones of $\R_{+}^{d}$.
\item
If $\mu_1(\E^{(d)}_d)=0$, we may find an $i\in\{2,\ldots,d\}$ such that $\mu_1(\E^{(i-1)}_d)>0$, yet $\mu_1(\E^{(i)}_d)=0$.
Hence $\mu_1$ concentrates on $\bCA^{(i-1)}_d$.
So we seek regular variation in $\E^{(i)}_d = \R_+^d\setminus\bCA^{(i-1)}_d$.
Suppose there exists $b_{i}(t)\uparrow \infty$ with $\lim_{t\to\infty} b_{1}(t)/b_i(t) = \infty$ and $\mu_{i}\neq0$ on $\E^{(i)}_d$    such that $\bV \in \MRV(\alpha_{i},b_{i},\mu_{i}, \E^{(i)}_d)$.
Then,  $\alpha_i\geq\alpha_1$, $b_{i}(\cdot) \in \RV_{1/\alpha_{i}}$ and $\mu_{i}(c\;\cdot\,) = c^{-\alpha_{i}} \mu_{i}(\,\cdot\,)$ for $c>0$. Hence $\bV$ has regular variation on $\E_d^{(i)}$ with parameter $-\alpha_i$.
\item  In the next step, if  $\mu_{i}(\E^{(d)}_d)>0$, we stop looking for regular variation; otherwise we keep seeking regular variation through $\E^{(i+1)}_d, \ldots, \E^{(d)}_d$ sequentially.
\end{enumerate}
The idea of regular variation on a sequence of cones is easier understood with an example.

\begin{example}\label{ex:indpar}
For $d\ge 2$, suppose $\bV=(V_{1}, \ldots, V_{d})^\top$ and $V_1,\ldots,V_d$ are iid Pareto($\alpha$) random variables with $\alpha>0$ such that $\P(V_i>t)= t^{-\al}$, $t\ge 1$.  
\begin{enumerate}[(i)]
\item First we observe that for all $i=1 ,\ldots,d$, we have $\bV\in\MRV(\alpha_i,b_i,\mu_i, \E^{(i)}_d)$ with $\alpha_i=i\alpha$ and $b_i(t) = t^{1/(i\alpha )}$  where
the limit measure $\mu_i$ on $\E^{(i)}_d$ is such that for any $\bz=(z_{1},\ldots,z_{d})^{\top} \in \E^{(d)}_d$,
 \begin{align}
 \mu_i(\{\bv\in \E^{(i)}_d: v_{j_{1}} > z_{j_{1}}, \dots, v_{j_{i}} > z_{j_{i}} &\text{ for some }1\le  j_{1} <\ldots< j_{i}\le d\}) \nonumber \\ & =  \frac{1}{\binom{d}{i}}\sum_{1\le j_{1}< \cdots <j_{i}\le d} \left({z_{j_{1}}z_{j_2}\cdots z_{j_{i}}}\right)^{-\alpha}. \label{ex:muidef}
 \end{align}
 This follows from Example~5.1 in \citet{maulik:resnick:2005} and Example~2.2 in \citet{mitra:resnick:2011hrv}.  Hence, if
 \begin{align}\label{ex:setC}
 C= \{\bv\in\R_+^d: v_{{1}} > z_{{1}}, \dots, v_{{i}} > z_{{i}} \}
 \end{align}  we find
 \begin{align}\label{ex:Vcoord}
\P(\bV\in t C) &=  t^{-i\alpha} \left({z_{{1}}z_{2}\cdots z_{{i}}}\right)^{-\alpha} +o(t^{-i\alpha}) ,\quad t\to\infty.
\end{align}
 \item The measure $\mu_i$ as defined in \eqref{ex:muidef} concentrates on $\CA_d^{(i)}\setminus\CA_d^{(i-1)}$.
\item In general, from part (i) we conclude that  for any Borel set $C\subset \E_d^{(i)}$ which is bounded away from $\CA_d^{(i-1)}$,
 \begin{align*}
\P(\bV\in t C) & = t^{-i\alpha} \mu_i(C) + o(t^{-i\alpha}), \quad\quad t\to\infty.
\end{align*}
So, in case $\mu_i(C)=0$, we get $\P(\bV\in t C) = o(t^{-i\alpha})$ as $t\to\infty$.
However, if  $C$ is of the form \eqref{ex:setC}, or a finite union of such sets (for fixed $i$), from \eqref{ex:Vcoord} we know that $\mu_i(C)>0$.
\end{enumerate}
\end{example}

\begin{remark}
Although multivariate regular variation can be defined for a very general class of cones  in $\R^{d}_+$  (see \citet{das:mitra:resnick:2013, lindskog:resnick:roy:2014, mitra:resnick:2011hrv} for examples), for the purposes of this paper, restricting to the  sub-cones $\E_d^{(1)},\ldots,\E_d^{(d)}$ defined in \eqref{def:ed1} and \eqref{def:edd} suffices. For an example of  regular variation with infinite sequence of indices on an infinite sequence of cones contained in the space $\R_+^2$, see \citet[Example 5.3]{das:mitra:resnick:2013}.
\end{remark}

\begin{definition}\label{def:bit}
Suppose $\bV=(V_1,\ldots,V_d)^{\top}\in\MRV(\alpha_i,b_i,\mu_i, \E^{(i)}_d)$ and $F_{V^{(i)}}^{\leftarrow}(s)=\inf\{t\in\R: F_{V^{(i)}}(t)\geq s\}$ is the generalized
inverse of the distribution function $F_{V^{(i)}}$ of $V^{(i)}$, where $V^{(1)}\geq \ldots\geq V^{(d)}$ are the order statistics of $V_1,\ldots,V_d$.
If $b_i(t)=F_{V^{(i)}}^{\leftarrow}\left(1-1/t\right)$, we call $b_i(\cdot)$ the {\em canonical choice} of the scaling function.
\end{definition}

\section{Breiman's Theorem and  regular variation on Euclidean subspaces}\label{sec:Breiman}

In this section we provide a complete characterization of  the vector-valued generalization addressed in   \citet[Proposition A.1]{basrak:davis:mikosch:2002a} for the space $\E_q^{(1)}$ and its subsequent {modification for} $\E_q^{(q)}$ for $q=d$  provided in \citet[Theorem 2.3]{janssen:drees:2016}.  We investigate the vector $\bX=\bA\bZ$, where $\bA \in \R^{q\times d}$ is a random matrix which is independent of $\bZ\in \R_+^d$, and $\bZ$ is multivariate regularly varying on subspaces $\E_d^{(i)}$ for $i=1,\ldots, d$. We provide asymptotic rates of convergence  of tail probabilities for  $\P(\bA\bZ\in t C)$ for Borel sets $C\subset\E_q^{(k)}$ for $k= 1,\ldots,q$.  For the sake of convenience, first we present the  two available results addressing this issue.

Throughout $\|\cdot\|$ denotes an arbitrary vector and operator norm, only the metric $d(\cdot,\cdot)$ is always defined by the sup-norm.
Most results quoted from previous papers appeared with asymptotic properties and definitions in terms of vague convergence, we restate them here with respect to $\M$-convergence.


  \begin{theorem}[{\citet[Proposition A.1]{basrak:davis:mikosch:2002a}}]\label{th31}
 Let $\bZ\in \R_+^d$ be a random vector such that  $\bZ \in \MRV(\al_{1}, \mu_1,\E_d^{(1)})$ with $\alpha_1\ge 0$ and $\bA \in \R^{q \times d}_{+}$  be a random matrix independent of $\bZ$ with  $0<\bE[\|\bA\|^{\alpha_1+\delta}] <\infty$ for some $\delta>0$. Then
  \begin{align}\label{eq:multBreiman}
\frac{\P(t^{-1}\bA\bZ \in \,\cdot\,)}{\P(\|\bZ\|>t)} \to  \bE\left[ \mu_1(\{\bz\in \E^{(1)}_{d}:\bA\bz \in \,\cdot\,\})\right]=:\overline{\mu}_{1}(\cdot),\quad \tto,
  \end{align}
 in $\M(\E^{(1)}_{q})$.  In particular, we have $\bA\bZ\in \MRV(\alpha_{1},\overline{\mu}_{1},\E_{q}^{(1)})$.
  \end{theorem}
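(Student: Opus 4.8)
The plan is to verify the $\M(\E_q^{(1)})$-convergence in \eqref{eq:multBreiman} directly from Definition~\ref{Def:M convergence}: it suffices to show $\int f\,d\nu_t\to\int f\,d\overline{\mu}_1$ for every bounded continuous $f:\E_q^{(1)}\to\R$ whose support is bounded away from $\bzero$, where $\nu_t(\cdot):=\P(t^{-1}\bA\bZ\in\cdot)/\P(\|\bZ\|>t)$. Fix such an $f$, say with $\mathrm{supp}(f)\subseteq\{\by:\|\by\|>\eta\}$ for some $\eta>0$. Using independence of $\bA$ and $\bZ$ and conditioning on $\bA$,
\[
\int f\,d\nu_t \;=\; \bE\bigl[\psi_t(\bA)\bigr],\qquad \psi_t(A):=\frac{\bE\!\left[f(t^{-1}A\bZ)\right]}{\P(\|\bZ\|>t)}.
\]
The strategy is then to (i) identify the pointwise-in-$A$ limit of $\psi_t(A)$, and (ii) justify passing the limit through the outer expectation by dominated convergence; step (ii) is where the moment hypothesis enters and is the technical heart.

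For (i), fix a deterministic $A\in\R_+^{q\times d}$ with $A\neq\bzero$. The function $\bz\mapsto f(A\bz)$ is bounded and continuous, and its support lies in $\{\bz:\|A\bz\|>\eta\}\subseteq\{\|\bz\|>\eta/\|A\|\}$, hence is bounded away from $\bzero$ in $\E_d^{(1)}$. Applying Definition~\ref{Def:M convergence} to the hypothesis $\P(t^{-1}\bZ\in\cdot)/\P(\|\bZ\|>t)\to\mu_1$ in $\M(\E_d^{(1)})$ (which is $\bZ\in\MRV(\al_1,\mu_1,\E_d^{(1)})$ in this normalization), with test function $\bz\mapsto f(A\bz)$, gives
\[
\psi_t(A)\;\longrightarrow\;\int_{\E_d^{(1)}} f(A\bz)\,\mu_1(d\bz)\;=:\;g(A),\qquad t\to\infty.
\]
Integrating $g$ against the law of $\bA$ reproduces $\int f\,d\overline{\mu}_1$ with $\overline{\mu}_1(\cdot)=\bE[\mu_1(\{\bz:\bA\bz\in\cdot\})]$.

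The main obstacle is step (ii). Since $f$ vanishes outside $\{\|\by\|>\eta\}$ and $\|A\bz\|\le\|A\|\,\|\bz\|$,
\[
0\le\psi_t(A)\le \|f\|_\infty\,\frac{\P(\|\bZ\|>t\eta/\|A\|)}{\P(\|\bZ\|>t)},
\]
where $\P(\|\bZ\|>\cdot)\in\RV_{-\al_1}$ is a consequence of $\bZ\in\MRV(\al_1,\E_d^{(1)})$. Fixing $\eps\in(0,\de)$ and applying Potter's bounds to this ratio, I dominate $\psi_t(\bA)$, uniformly in all large $t$, by a constant multiple of $1+(\|\bA\|/\eta)^{\al_1+\eps}$, which is $\bA$-integrable precisely because $\bE[\|\bA\|^{\al_1+\de}]<\infty$. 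The one regime escaping a single Potter bound is $\{\|\bA\|>t\eta/t_0\}$, where the argument $t\eta/\|\bA\|$ falls below the Potter threshold $t_0$; there I bound $\psi_t(\bA)$ crudely by $\|f\|_\infty/\P(\|\bZ\|>t)$ and use Markov's inequality, $\P(\|\bA\|>x)\le \bE[\|\bA\|^{\al_1+\de}]\,x^{-(\al_1+\de)}=o(\P(\|\bZ\|>x))$, to see this part is asymptotically negligible. Dominated convergence then yields $\int f\,d\nu_t\to\bE[g(\bA)]=\int f\,d\overline{\mu}_1$.

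Finally, to conclude $\bA\bZ\in\MRV(\al_1,\overline{\mu}_1,\E_q^{(1)})$ I check the structural requirements on $\overline{\mu}_1$: finiteness on sets bounded away from $\bzero$ follows from the same domination applied with $f$ replaced by an indicator majorant; the homogeneity $\overline{\mu}_1(c\,\cdot\,)=c^{-\al_1}\overline{\mu}_1(\cdot)$ is inherited directly from the corresponding scaling property of $\mu_1$; and $\overline{\mu}_1$ is non-null under the stated hypotheses, since $\mu_1$ is non-null on $\E_d^{(1)}$ and $0<\bE[\|\bA\|^{\al_1+\de}]$ forces $\P(\bA\neq\bzero)>0$, so that $\overline{\mu}_1$ does not collapse to the zero measure. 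This establishes both the displayed convergence and the membership claim.
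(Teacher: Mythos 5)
Your argument for the convergence \eqref{eq:multBreiman} itself is sound. Note first that the paper does not prove this theorem --- it quotes it from \citet{basrak:davis:mikosch:2002a} --- but its proof of the analogous Theorem~\ref{thm:breiman} proceeds by exactly the same split you use: the regime where the matrix functional is large is killed by the univariate Breiman theorem together with the moment condition on $\bA$, and the bounded regime is handled by conditioning on $\bA$ and a dominated-convergence/Pratt argument. Your version, phrased with test functions $f$ and with Potter's bounds supplying the dominating function $C\bigl(1+(\|\bA\|/\eta)^{\al_1+\eps}\bigr)$ on $\{\|\bA\|\le t\eta/t_0\}$ and Markov's inequality disposing of $\{\|\bA\|>t\eta/t_0\}$, is a correct and standard implementation of this idea; the identification of the pointwise limit $\psi_t(A)\to\int f(A\bz)\,\mu_1(d\bz)$ via the test function $\bz\mapsto f(A\bz)$ (bounded, continuous, support contained in $\{\|\bz\|>\eta/\|A\|\}$) is also fine, as is the homogeneity of $\overline{\mu}_1$.

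The genuine gap is your final claim that $\overline{\mu}_1$ is non-null. The implication ``$\mu_1$ non-null and $\P(\bA\neq\bzero)>0$ imply $\overline{\mu}_1\neq 0$'' is false: $\overline{\mu}_1(\E_q^{(1)})=\bE\bigl[\mu_1(\{\bz:\bA\bz\neq\bzero\})\bigr]$ can vanish whenever the support of $\mu_1$ lies (almost surely) in the kernel of $\bA$. For a concrete example take $d=2$, $\bZ=(Z,0)^{\top}$ with $Z$ Pareto, so that $\mu_1$ concentrates on the first coordinate axis, and let $\bA=(0,1)$ be deterministic; then $0<\bE[\|\bA\|^{\al_1+\de}]<\infty$ but $\bA\bZ=0$ almost surely and $\overline{\mu}_1\equiv 0$, so $\bA\bZ\notin\MRV(\al_1,\overline{\mu}_1,\E_1^{(1)})$ under Definition~\ref{def:mrv}, which requires a non-null limit. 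This is precisely the deficiency of the cited proposition that motivates the rest of the paper (the discussion immediately after Theorem~\ref{th31} and the hypotheses of Theorem~\ref{thm:breiman} exist to rule such degeneracies out), so the honest conclusion of your argument is the convergence \eqref{eq:multBreiman} together with the membership statement \emph{provided} $\overline{\mu}_1$ is non-null; non-nullity does not follow from the stated hypotheses and should not be asserted.
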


  \begin{remark} A couple of remarks are in order here.
  \begin{enumerate}[(i)]
      \item For $\|\bZ\|$ to become large, it suffices that one component of $\bZ$ becomes large. Hence $\P(\|\bZ\|>t) \sim c\,\P(Z^{(1)}>t)$ as $t\to\infty$ for some constant $c>0$, {and $\P(Z^{(1)}>t)$ provides the rate of convergence of $\P(t^{-1}\bA\bZ \in \,\cdot\,)$ to zero.}
\item The observation in \eqref{firsteq} is an easy consequence of this theorem.
  \end{enumerate}
  \end{remark}

For certain sets in $\E_q^{(1)}$, it is possible that the right hand side of \eqref{eq:multBreiman} turns out to be zero, rendering the result uninformative. A partial solution for guaranteeing a non-zero limit in \eqref{eq:multBreiman}
is provided in \citet{janssen:drees:2016}, when $q=d$ and where convergence occurs in the space $\E^{(d)}_{d}=(0,\infty)^{d}$, which means that we focus on sets, where all components of $\bX=\bA\bZ$ are large, translated into the event $ \{X^{(d)}>t\}$.


The formal setting in \citet{janssen:drees:2016} is as follows. Define $\tau: \R^{d}_{+}\to \R_{+}$ to be the distance of a point $\bx \in \R^{d}_{+}$ from the space $\bCA^{(d-1)}_{d}: = \left[0,\infty\right)^{d} \setminus (0,\infty)^{d}$ in the sup-norm, given by $\tau(\bz) : = d(\bz, \bCA_{d}^{(d-1)}) = z^{(d)}$.
For a deterministic matrix $\bA \in \R^{d\times d}$ we define the analog
 \beam\label{mnorm}
 \tau(\bA):= \sup_{\bz\in \E^{(d)}_{d}: \tau(\bz)=1} \tau(\bA \bz)
 = \sup_{\bz\in \partial\aleph^{(d)}_d} \tau(\bA \bz),
 \eeam
where for $i\in \{1,\ldots,d\}$,
\begin{align*}
\partial\aleph_{d}^{(i)} \, = \, \{\bx \in \E_d^{(i)} : d(\bx, \CA_d^{(i-1)}) = 1\}.
\end{align*}
\begin{theorem}[{\citet[Theorem 2.3]{janssen:drees:2016}}]\label{lem:jd2017thm23}
  Let $\bZ\in \R_+^d$ be a random vector such that $\bZ \in \MRV(\al_{d},\mu_d,\E^{(d)}_{d})$ and $\bA \in \R^{d \times d}$ be a  random matrix independent of $\bZ$. Assume $\tau(\bA)>0$ almost surely and  $\bE[\tau(\bA)^{\alpha_d+\delta}] <\infty$ for some $\delta>0$. Then
   \begin{align*}
   \frac{\P(t^{-1}\bA\bZ \in \cdot)}{\P(\tau(\bZ)>t)} \to \bE \left[\mu_d(\{\bz\in\E_d^{(d)}: \bA\bz \in \cdot\})\right]=: \overline{\mu}_{d}(\cdot),\quad \tto,
   \end{align*}
 in $\M(\E^{(d)}_{d})$. In particular, we have $\bA\bZ\in \MRV(\alpha_{d},\overline{\mu}_{d},\E_{d}^{(d)})$.
\end{theorem}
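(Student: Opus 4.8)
\emph{Proof plan.} The plan is to run a Breiman-type argument in which the role of the norm is played by the functional $\tau$. By Definition~\ref{Def:M convergence}, convergence in $\M(\E_d^{(d)})$ is equivalent to convergence of the integrals of every bounded continuous $f\colon\E_d^{(d)}\to\R$ whose support is bounded away from $\C_0:=\CA_d^{(d-1)}$. So I would fix such an $f$, normalised so that $\|f\|_\infty\le 1$ and $f\equiv 0$ on $\{\bx:\tau(\bx)<\eps\}$ for some $\eps>0$, and show
\[
\frac{\bE\big[f(t^{-1}\bA\bZ)\big]}{\P(\tau(\bZ)>t)}\;\longrightarrow\;\int f\,d\overline{\mu}_d,\qquad\tto.
\]
Since $\bA$ and $\bZ$ are independent, Fubini's theorem lets me condition on $\bA$ and write the left-hand side as $\bE[g_t(\bA)]$, where for a fixed (deterministic) matrix $\bA$,
\[
g_t(\bA):=\frac{\bE\big[f(t^{-1}\bA\bZ)\big]}{\P(\tau(\bZ)>t)},\qquad \nu_t^{\bZ}:=\frac{\P(t^{-1}\bZ\in\,\cdot\,)}{\P(\tau(\bZ)>t)},
\]
the remaining expectation being over $\bZ$ only.

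\emph{Pointwise limit in $\bA$.} First I would fix a matrix $\bA$ with $0<\tau(\bA)<\infty$ and set $h_{\bA}(\bz):=f(\bA\bz)$. The map $\bz\mapsto\bA\bz$ is continuous, so $h_{\bA}$ is bounded and continuous. Moreover, the homogeneity $\tau(c\bz)=c\,\tau(\bz)$ together with the definition \eqref{mnorm} of $\tau(\bA)$ yields $\tau(\bA\bz)\le\tau(\bA)\,\tau(\bz)$ for all $\bz\in\E_d^{(d)}$; hence $h_{\bA}(\bz)\neq 0$ forces $\tau(\bA\bz)\ge\eps$ and therefore $\tau(\bz)\ge\eps/\tau(\bA)>0$, so $h_{\bA}$ has support bounded away from $\C_0$ and is an admissible test function. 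Because $\bZ\in\MRV(\alpha_d,\mu_d,\E_d^{(d)})$ says exactly that $\nu_t^{\bZ}\to\mu_d$ in $\M(\E_d^{(d)})$, integrating $h_{\bA}$ gives
\[
g_t(\bA)=\int h_{\bA}\,d\nu_t^{\bZ}\;\longrightarrow\;\int f(\bA\bz)\,\mu_d(d\bz),\qquad\tto,
\]
and the right-hand side integrates $f$ against $\mu_d(\{\bz:\bA\bz\in\,\cdot\,\})$, the $\bA$-section of $\overline{\mu}_d$.

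\emph{Interchange of limit and expectation -- the crux.} The main obstacle is to justify passing the limit through $\bE[\,\cdot\,]$ over $\bA$, for which I would produce a $t$-uniform integrable dominating function. Using $\|f\|_\infty\le 1$, $f\equiv0$ on $\{\tau<\eps\}$, and $\tau(\bA\bZ)\le\tau(\bA)\tau(\bZ)$,
\[
g_t(\bA)\le\frac{\P(\tau(\bA\bZ)\ge\eps t)}{\P(\tau(\bZ)>t)}\le\frac{\P\big(\tau(\bZ)\ge \eps t/\tau(\bA)\big)}{\P(\tau(\bZ)>t)}.
\]
Since $\P(\tau(\bZ)>\,\cdot\,)\in\RV_{-\alpha_d}$, I would split on $\{\tau(\bA)\le 1\}$ and $\{\tau(\bA)>1\}$. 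On the former the argument $\eps t/\tau(\bA)\ge\eps t$ makes the ratio at most $\P(\tau(\bZ)\ge\eps t)/\P(\tau(\bZ)>t)\to\eps^{-\alpha_d}$, hence bounded by a constant for $t$ large. On the latter, Potter's bounds (\citet{bingham:goldie:teugels:1989}) give, for any $\delta'\in(0,\delta]$, a constant $K$ with the ratio at most $K\,\eps^{-(\alpha_d+\delta')}\tau(\bA)^{\alpha_d+\delta'}$ for $t$ large. The resulting dominating variable is integrable because $\bE[\tau(\bA)^{\alpha_d+\delta}]<\infty$ (taking $\delta'\le\delta$), so dominated convergence yields $\bE[g_t(\bA)]\to\bE\big[\int f(\bA\bz)\,\mu_d(d\bz)\big]=\int f\,d\overline{\mu}_d$.

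\emph{The limit measure.} Finally I would check that $\overline{\mu}_d\in\M(\E_d^{(d)})$ is non-null. For a Borel set $B$ with $\tau(B)\ge\eps$, the scaling property $\mu_d(c\,\cdot\,)=c^{-\alpha_d}\mu_d(\cdot)$ and $\tau(\bA\bz)\le\tau(\bA)\tau(\bz)$ give $\overline{\mu}_d(B)=\bE[\mu_d(\bA^{-1}B)]\le\bE\big[\mu_d(\{\tau(\bz)\ge\eps/\tau(\bA)\})\big]\le C\,\eps^{-\alpha_d}\,\bE[\tau(\bA)^{\alpha_d}]<\infty$, so $\overline{\mu}_d$ is finite on sets bounded away from $\C_0$; non-nullity follows from $\mu_d\neq0$ and $\tau(\bA)>0$ almost surely, which guarantees that $\bA$ carries a $\mu_d$-non-null part of $\E_d^{(d)}$ into the interior. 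Since the normalisation $\P(\tau(\bZ)>t)\in\RV_{-\alpha_d}$, this identifies $\bA\bZ\in\MRV(\alpha_d,\overline{\mu}_d,\E_d^{(d)})$. Measurability of $\bA\mapsto g_t(\bA)$ and of $\bA\mapsto\int f(\bA\bz)\,\mu_d(d\bz)$ is routine and I would verify it along the way.
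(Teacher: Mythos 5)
First, a point of reference: the paper does not prove Theorem~\ref{lem:jd2017thm23} at all — it is quoted (restated in $\M$-convergence language) from Janssen and Drees — so the only internal comparison is with the proof of the paper's own generalization, Theorem~\ref{thm:breiman}. Your scheme (test functions supported away from $\CA_d^{(d-1)}$, conditioning on $\bA$, the pointwise limit from $\MRV$ of $\bZ$, then an integrable domination) is the standard Breiman template and is technically different from the paper's route, which works with continuity sets $C$, splits on $\{\tau(\bA)\le M\}$ versus $\{\tau(\bA)>M\}$, disposes of the second piece via the univariate Breiman theorem as $M\to\infty$, and treats the first via Pratt's lemma. Both are viable; yours trades Pratt's lemma for a Potter-type uniform bound. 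Your reductions ($\tau(\bA\bz)\le\tau(\bA)\tau(\bz)$, the support of $h_{\bA}$ being bounded away from $\CA_d^{(d-1)}$, the finiteness of $\overline{\mu}_d$ on sets bounded away from $\CA_d^{(d-1)}$) are all correct.

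Two steps need repair. \emph{(i)} The domination by Potter's bounds is not valid as written: the inequality $\P(\tau(\bZ)\ge y)/\P(\tau(\bZ)>t)\le K(y/t)^{-\alpha_d-\delta'}$ holds only for $y,t\ge t_0$, and with $y=\eps t/\tau(\bA)$ the requirement $y\ge t_0$ fails on the event $\{\tau(\bA)>\eps t/t_0\}$ — exactly where $g_t(\bA)$ can be as large as $1/\P(\tau(\bZ)>t)$. You must peel off this event and show its contribution vanishes, e.g.
\begin{equation*}
\frac{\P(\tau(\bA)>\eps t/t_0)}{\P(\tau(\bZ)>t)}\;\le\;\bE\!\left[\tau(\bA)^{\alpha_d+\delta}\right]\left(\tfrac{t_0}{\eps}\right)^{\alpha_d+\delta}\frac{t^{-\alpha_d-\delta}}{\P(\tau(\bZ)>t)}\;\longrightarrow\;0
\end{equation*}
by Markov's inequality and $\P(\tau(\bZ)>\cdot)\in\RV_{-\alpha_d}$; on the complement the Potter bound is legitimate and the dominating variable is integrable. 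This splitting plays the same role as the $\{\tau(\bA)>M\}$ truncation in the paper's proof of Theorem~\ref{thm:breiman}. \emph{(ii)} Non-nullity of $\overline{\mu}_d$ does not follow from ``$\mu_d\neq 0$ and $\tau(\bA)>0$ a.s.''\ by the argument you sketch: $\tau(\bA)>0$ only yields one direction $\bz_0$ with $\bA\bz_0\in\E_d^{(d)}$, while $\mu_d$ may charge no neighbourhood of $\bz_0$ (it can be concentrated on a single ray). For non-negative $\bA$ the gap closes because $\tau(\bA)>0$ forces every row of $\bA$ to have a positive entry, whence $\bA^{-1}(\E_d^{(d)})\supseteq\E_d^{(d)}$ and $\overline{\mu}_d(\E_d^{(d)})\ge\mu_d(\E_d^{(d)})>0$ — this is precisely ``Case 2'' in the paper's proof of Theorem~\ref{thm:breiman}; for general real $\bA$, which the statement formally allows, one needs the structural input of Janssen and Drees' Lemma 2.2. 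Since this non-degeneracy is exactly where the paper's own argument invests its effort (condition (ii) on $\mu_i(\widetilde\bCA_d^{(i)}(j))$ and the Case 1/Case 2 analysis), it should not be waved through in one sentence.
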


 \begin{remark}\label{rem:jandrees}
 A couple of remarks are necessary to explain the result stated above.
  \begin{enumerate}[(i)]
   \item  Note that  $\P(\tau(\bZ)>t)= \P(Z^{(d)}>t)$ which provides the rate of convergence of $\P(t^{-1}\bA\bZ\in\cdot)$ to zero {as $t\to\infty$}.
   \item The observation in \eqref{secondeq} is an easy consequence of Theorem \ref{lem:jd2017thm23}.
   \item    Theorem \ref{lem:jd2017thm23} is designed for a specific situation in the context of stochastic volatility models. It is restrictive in its assumptions and may fail to capture a variety of instances where the right hand side of \eqref{eq:multBreiman} is zero.
In particular, for a square random matrix $\bA$ with almost surely non-negative entries, Theorem \ref{lem:jd2017thm23}  requires  that $\bA$ is almost surely invertible and, moreover, that its inverse has almost surely non-negative entries (see \citet[Lemma 2.2]{janssen:drees:2016}).
This entails that almost all realizations of $\bA$ are row permutations of diagonal matrices with positive diagonal entries (cf. \citet{dingrhee}).
  \end{enumerate}
  \end{remark}

 \subsection{Extension of Breiman's Theorem to Euclidean subspaces}

 In light of the previous results,
 we provide a multivariate extension to Breiman's Theorem which entails non-trivial convergence for a multitude of forms of $\bA$. Let $\bA\in \R_{+}^{q\times d}$ be deterministic.
We define the analog sequence of subcones of $\R_+^q$ as in \eqref{def:ed1}-\eqref{def:edd}  and proceed as follows. For  $k =1,\dots, q$, define $\tau_q^{(k)}:\R_+^q\to\R_+$ to be the distance of a point $\bx \in \R^{q}_{+}$ from the space $\CA^{(k-1)}_{q}$ in the sup-norm, given by
\begin{align}\label{eq:taukq}
\tau_{q}^{(k)}(\bx) & = d(\bx,\CA^{(k-1)}_{q}) = x^{(k)}.
\end{align}

Furthermore, we define in analogy to \eqref{mnorm} the function $\tau_{q,d}^{(k,i)}:\R_+^{q\times d}\to\R_+$ given by
\beam\label{mtau}
\tau_{q,d}^{(k,i)} (\bA) =\sup_{\bz \in \E_d^{(i)}} \frac{\tau_{q}^{(k)} (\bA \bz)}{\tau_{d}^{(i)}(\bz)} =\sup_{\bz\in\E_d^{(i)}} \frac{(\bA\bz)^{(k)}}{z^{(i)}} = \sup_{\bz\in \partial \aleph_d^{(i)}} \tau_{q}^{(k)} (\bA \bz).
\eeam
Note that  $\tau_{q,d}^{(q,d)} (\bA) =\tau(\bA)$ from \eqref{mnorm} if $q=d$.

Although the functions $\tau_{q}^{(k)}, \tau_{q,d}^{(k,i)}$ are not necessarily seminorms on the induced vector space (see \citet[Section~5.1]{horn:johnson:2013}), they admit to some useful  properties as  listed below.
We call a row of $\bA$ {\it{trivial}}, if it is a zero vector.

   \begin{lemma}\label{lem:tauqi}
    For every deterministic matrix $\bA \in \R^{q\times d}_{+}$ and $\bz \in \R^{d}_{+}$ the following hold
    for $i=1,\ldots,d$ and $k=1,\ldots,q$:
    \begin{enumerate}
   \item[(a)] \, $\tau_{q}^{(k)}(\bA\bz) \le \tau_{q,d}^{(k,i)}(\bA)\tau_{d}^{(i)}(\bz)$.
    \item[(b)] \, $\tau_{q,d}^{(k,i)}(\bA)\leq \tau_{q,d}^{(k-1,i)}(\bA)$.
    \item[(c)] \, $\tau_{q,d}^{(k,i)}(\bA)\leq \tau_{q,d}^{(k,i+1)}(\bA)$.
     \item[(d)] \,
     $\tau_{q,d}^{(q,1)}(\bA)>0$ if and only if all rows of $\bA$ are non-trivial.
    \item[(e)] \, $\tau_{q,d}^{(k,1)}(\bA)\leq \tau_{q,d}^{(1,1)}(\bA)<\infty$.
      \end{enumerate}
    \end{lemma}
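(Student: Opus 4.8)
The plan is to read all five statements off the three equivalent expressions for $\tau_{q,d}^{(k,i)}(\bA)$ in \eqref{mtau}, combined with two elementary monotonicity facts about order statistics on $\R_+^q$: the map $\bx\mapsto x^{(k)}$ is non-increasing in the rank $k$, and it is non-decreasing under the componentwise partial order (if $\bx\le\by$ then $x^{(k)}\le y^{(k)}$ for every $k$). Since $\bA\ge\bzero$, left multiplication by $\bA$ also preserves the componentwise order. Four of the five parts then reduce to bookkeeping with these facts; the one substantial point is (c).

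Part (a) is immediate from the supremum form: for $\bz\in\E_d^{(i)}$ the quotient $\tau_q^{(k)}(\bA\bz)/\tau_d^{(i)}(\bz)$ is one of the terms defining $\tau_{q,d}^{(k,i)}(\bA)$, so clearing the positive denominator $\tau_d^{(i)}(\bz)=z^{(i)}$ gives the bound; the degenerate case $z^{(i)}=0$ follows by applying the inequality to $\bz+\eps\bone$, noting $(\bz+\eps\bone)^{(i)}=z^{(i)}+\eps$, and letting $\eps\downarrow0$ by continuity of $\tau_q^{(k)}(\bA\,\cdot\,)$. Part (b) uses monotonicity in $k$: from $(\bA\bz)^{(k)}\le(\bA\bz)^{(k-1)}$ for every $\bz$, dividing by $z^{(i)}$ and taking the supremum over $\bz\in\E_d^{(i)}$ yields $\tau_{q,d}^{(k,i)}(\bA)\le\tau_{q,d}^{(k-1,i)}(\bA)$. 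Part (e) then has two halves: its first inequality is obtained by iterating (b) from $k$ down to $1$, and its finiteness claim follows by identifying $\tau_{q,d}^{(1,1)}(\bA)=\sup_{\bz\ne\bzero}\|\bA\bz\|_\infty/\|\bz\|_\infty$, which is exactly the operator norm of $\bA$ induced by the sup-norm, equal to the finite maximal row sum $\max_{1\le j\le q}\sum_{m=1}^d A_{jm}$.

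The \emph{main obstacle} is (c), because increasing $i$ to $i+1$ simultaneously shrinks the feasible set ($\E_d^{(i+1)}\subset\E_d^{(i)}$) and shrinks the normalizing denominator ($z^{(i+1)}\le z^{(i)}$), so these two effects push the desired inequality in opposite directions and a term-by-term comparison of the two suprema is not available. I would instead argue constructively using the normalized form $\tau_{q,d}^{(k,i)}(\bA)=\sup_{\bz\in\partial\aleph_d^{(i)}}\tau_q^{(k)}(\bA\bz)$. Fix $\bz\in\partial\aleph_d^{(i)}$, so that $z^{(i)}=1$ and hence $z^{(i+1)}\le1$, and define $\bz'$ coordinatewise by $z'_j=\max(z_j,1)$. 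Because $x\mapsto\max(x,1)$ is non-decreasing, the order statistics transform as ${z'}^{(m)}=\max(z^{(m)},1)$; in particular ${z'}^{(i+1)}=\max(z^{(i+1)},1)=1$, so $\bz'\in\partial\aleph_d^{(i+1)}$. Moreover $\bz'\ge\bz$ componentwise, whence $\bA\bz'\ge\bA\bz$ and therefore $(\bA\bz')^{(k)}\ge(\bA\bz)^{(k)}$. Combining, $\tau_q^{(k)}(\bA\bz)\le\tau_q^{(k)}(\bA\bz')\le\tau_{q,d}^{(k,i+1)}(\bA)$, and taking the supremum over $\bz\in\partial\aleph_d^{(i)}$ gives (c).

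Finally, for (d) I would prove both implications directly from $\tau_{q,d}^{(q,1)}(\bA)=\sup_{\bz\ne\bzero}(\bA\bz)^{(q)}/z^{(1)}$, where $(\bA\bz)^{(q)}=\min_{1\le j\le q}(\bA\bz)_j$ is the smallest coordinate. If some row $j_0$ of $\bA$ is trivial, then $(\bA\bz)_{j_0}=0$ for every $\bz$, so $(\bA\bz)^{(q)}=0$ identically and $\tau_{q,d}^{(q,1)}(\bA)=0$. Conversely, if every row is non-trivial, each row has a strictly positive entry, so $\bz=\bone$ gives $(\bA\bone)_j=\sum_{m=1}^d A_{jm}>0$ for all $j$; hence $(\bA\bone)^{(q)}=\min_j(\bA\bone)_j>0$, and since $\bone^{(1)}=1$ we conclude $\tau_{q,d}^{(q,1)}(\bA)\ge(\bA\bone)^{(q)}>0$. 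This yields the stated equivalence, and apart from the construction in (c) every step is routine manipulation of the definitions.
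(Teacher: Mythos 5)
Your proof is correct, and for parts (a), (b), (d) and (e) it follows essentially the same route as the paper: (a) by homogeneity/clearing the denominator, (b) by monotonicity of order statistics in the rank, (d) via the test vector $\bone$ for one direction and the vanishing of $\min_j(\bA\bz)_j$ for the other, and (e) by iterating (b) and bounding $\tau_{q,d}^{(1,1)}(\bA)$ by a finite expression in the entries of $\bA$ (your identification with the sup-norm operator norm, i.e.\ the maximal row sum, is a slightly sharper constant than the paper's $d\max_{i,j}A_{ij}$, but both suffice). The one place where you genuinely diverge is (c): the paper dismisses it as immediate from the definition, whereas you correctly observe that passing from $i$ to $i+1$ shrinks the feasible set and the denominator simultaneously, so no term-by-term comparison of the two suprema is available. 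Your construction $z'_j=\max(z_j,1)$, which sends $\partial\aleph_d^{(i)}$ into $\partial\aleph_d^{(i+1)}$ while increasing $\bA\bz$ componentwise (using $\bA\ge\bzero$), is a clean and valid way to close that gap, and your small limiting argument in (a) for the degenerate case $\tau_d^{(i)}(\bz)=0$ is likewise a detail the paper leaves implicit. In short: same skeleton, with two minor holes in the published argument properly filled.
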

\begin{proof} $\mbox{}$
\begin{enumerate}
\item[(a)] By definition we have
\beao
\tau_{q}^{(k)}(\bA\bz) = \tau_{q}^{(k)}\Big(\bA\frac{\bz}{\tau_{d}^{(i)}(\bz)}\Big)\tau_{d}^{(i)}(\bz)\le \tau_{q,d}^{(k,i)}(\bA)\tau_{d}^{(i)}(\bz).
\eeao
\item[(b)] and (c) immediately follow from the definition.
\item[(d)] If $\bA = (A_{ij})_{i,j}$ has {no trivial row}, denoting $\be = (1,\ldots,1)^{\top}\in\R_+^d$, we have     $$\tau_{q,d}^{(q,1)} (\bA) \ge \frac{\tau_q^{(q)} (\bA\be)}{\tau_d^{(1)}(\be)} = \min_{1\le i\le q} \sum_{j=1}^d A_{ij}>0,$$
the final domination being a consequence of each row of $\bA$ having at least one positive entry.

On the other hand, suppose that $\tau_{q,d}^{(q,1)} (\bA)>0$ and  $\bA$ has a trivial row.  Then for any $\bz\in \E_d^{(1)}$, we have
\[\tau_q^{(q)} (\bA\bz) = \min_{1\le i\le q} \sum_{j=1}^d A_{ij}z_j =0.\]
This implies
$$ \tau_{q,d}^{(q,1)} (\bA) = \sup_{\bz \in \E_d^{(i)}} \frac{\tau_q^{(q)} (\bA\bz)}{\tau_d^{(1)}(\bz)}=0,$$ which is a contradiction. Hence $\bA$ cannot have a trivial row.
\item[(e)] The first inequality follows from (b).
Moreover $$\tau_{q,d}^{(1,1)}(\bA) = \sup_{\bz\in \partial \aleph_d^{(1)}} (\bA\bz)^{(1)} = \sup_{\bz\in \E_d^{(1)}: z^{(1)}=1} (\bA\bz)^{(1)} \le d\max_{1\le i \le q, 1\le j \le d} A_{ij} < \infty.$$
\end{enumerate}
\end{proof}
For a deterministic matrix $\bA\in\R_+^{q\times d}$ and $C\subseteq \R_+^q$, the {\em pre-image of $C$} is given by $$\bA^{-1}(C) = \{\bz \in \R_+^d: \bA \bz \in C\}.$$The following lemma characterizes the mapping of the subspaces of $\R_+^{d}$ under the linear map $\bA$ and is key to the results to follow.

\begin{lemma} \label{Lemma 3.4}
Let $\bA \in \R_+^{q\times d}$ be a deterministic matrix with all rows non-trivial.
Then for fixed $i\in\{1,\dots,d\}$ and fixed $k\in\{1,\dots,q\}$, the following are equivalent:
\begin{itemize}
    \item[(a)] $\bA^{-1}(\E_q^{(k)}) \subseteq \E_d^{(i)}$.
    \item[(b)] $0<\tau_{q,d}^{(k,i)} (\bA) <\infty$.
\end{itemize}
\end{lemma}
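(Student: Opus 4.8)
The plan is to separate the two assertions packaged in (b): the positivity $\tau_{q,d}^{(k,i)}(\bA)>0$ and the finiteness $\tau_{q,d}^{(k,i)}(\bA)<\infty$. The positivity is automatic under the standing hypothesis and carries no real content: by the monotonicity relations in Lemma~\ref{lem:tauqi}(b),(c) one has $\tau_{q,d}^{(k,i)}(\bA)\ge\tau_{q,d}^{(q,i)}(\bA)\ge\tau_{q,d}^{(q,1)}(\bA)$, and $\tau_{q,d}^{(q,1)}(\bA)>0$ precisely because all rows of $\bA$ are non-trivial, by Lemma~\ref{lem:tauqi}(d) (equivalently, testing the ratio in \eqref{mtau} at $\bz=\bone$ already yields a positive lower bound). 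Thus the whole content of the lemma is the equivalence of (a) with $\tau_{q,d}^{(k,i)}(\bA)<\infty$. Before proving it I would rewrite (a) in coordinate form: unwinding the definitions of $\E_q^{(k)}$, $\E_d^{(i)}$ and the sets $\CA$, statement (a) is the implication $\bz\notin\E_d^{(i)}\Rightarrow\bA\bz\notin\E_q^{(k)}$, i.e. $z^{(i)}=0\Rightarrow(\bA\bz)^{(k)}=0$; equivalently, $\bA$ maps $\CA_d^{(i-1)}$ into $\CA_q^{(k-1)}$, meaning every $\bz$ with at most $i-1$ positive coordinates has an image $\bA\bz$ with at most $k-1$ positive coordinates.

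For the direction (a)$\Rightarrow$finiteness I would work with the representation $\tau_{q,d}^{(k,i)}(\bA)=\sup_{\bz\in\partial\aleph_d^{(i)}}\tau_q^{(k)}(\bA\bz)$ from \eqref{mtau}. Fix $\bz\in\partial\aleph_d^{(i)}$, so $z^{(i)}=1$, and split $\bz=\bz'+\bz''$, where $\bz'$ carries the coordinates of $\bz$ that exceed $1$ and $\bz''$ the remaining coordinates, all lying in $[0,1]$. Since $z^{(i)}=1$, at most $i-1$ coordinates exceed $1$, so $\bz'\in\CA_d^{(i-1)}$, and the coordinate form of (a) forces $\bA\bz'$ to have at most $k-1$ positive coordinates. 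On the other hand every coordinate of $\bA\bz''$ is bounded by $M:=d\max_{m,j}A_{mj}$. Hence at most $k-1$ coordinates of $\bA\bz=\bA\bz'+\bA\bz''$ can exceed $M$, so the $k$-th largest satisfies $(\bA\bz)^{(k)}\le M$; taking the supremum over $\partial\aleph_d^{(i)}$ gives $\tau_{q,d}^{(k,i)}(\bA)\le M<\infty$.

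For the converse I would prove the contrapositive, that the failure of (a) forces $\tau_{q,d}^{(k,i)}(\bA)=\infty$. If (a) fails there is $\bz_0\in\CA_d^{(i-1)}$, with $p\le i-1$ positive coordinates, whose image $\bA\bz_0$ has at least $k$ positive coordinates. I would lift $\bz_0$ into $\partial\aleph_d^{(i)}$ by choosing $i-p$ indices on which $\bz_0$ vanishes (possible since $p\le i-1\le d-1$) and setting $\bz_t=t\bz_0+\bz_1$, where $\bz_1$ is the $\{0,1\}$-vector supported on those indices. For $t$ large, $\bz_t$ has exactly $i$ positive coordinates and $z_t^{(i)}=1$, so $\bz_t\in\partial\aleph_d^{(i)}$; and since $\bA$ has non-negative entries and $\bA\bz_0$ has at least $k$ positive coordinates, at least $k$ coordinates of $\bA\bz_t=t\,\bA\bz_0+\bA\bz_1$ tend to $\infty$ as $t\to\infty$. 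Thus $(\bA\bz_t)^{(k)}\to\infty$ and $\tau_{q,d}^{(k,i)}(\bA)=\infty$, completing the equivalence.

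I expect the finiteness direction to be the main obstacle, the crux being the bookkeeping that the only coordinates of $\bz$ able to blow up are those exceeding its $i$-th order statistic $z^{(i)}$, that there are at most $i-1$ of them, and that (a) confines their image to at most $k-1$ coordinates, so that the $k$-th largest entry of $\bA\bz$ is controlled by the uniformly bounded remainder $\bA\bz''$. By contrast the converse is a direct construction. Minor care is needed with ties at the threshold value $1$ (coordinates equal to $1$ should be placed in $\bz''$) and with the boundary case $i=1$, where (a) holds trivially and finiteness is already contained in Lemma~\ref{lem:tauqi}(e).
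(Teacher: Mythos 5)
Your proposal is correct. It shares the paper's key combinatorial insight---that on $\partial\aleph_d^{(i)}$ only the at most $i-1$ coordinates strictly above the $i$-th order statistic can be unbounded, and that (a) confines the image of such a vector to at most $k-1$ positive coordinates---but packages both directions differently. For (a)$\Rightarrow$finiteness the paper argues by contradiction: it assumes $\tau_{q,d}^{(k,i)}(\bA)=\infty$, picks a single $\bz$ with $\tau_q^{(k)}(\bA\bz)\ge M+d$, and shifts the truncated vector $\bz^*$ down by $\be$ to produce a point of $\CA_d^{(i-1)}$ whose image still lies in $\E_q^{(k)}$; your decomposition $\bz=\bz'+\bz''$ turns the same idea into a direct uniform bound $(\bA\bz)^{(k)}\le d\max_{m,j}A_{mj}$, which is arguably cleaner and even yields an explicit value of the supremum's ceiling. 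For (b)$\Rightarrow$(a) the paper has a one-line argument via Lemma~\ref{lem:tauqi}(a): any preimage $\bz_{\bx}$ of $\bx\in\E_q^{(k)}$ satisfies $\tau_d^{(i)}(\bz_{\bx})\ge\tau_q^{(k)}(\bx)/\tau_{q,d}^{(k,i)}(\bA)>0$; your constructive contrapositive (lifting a violating $\bz_0\in\CA_d^{(i-1)}$ into $\partial\aleph_d^{(i)}$ and letting $t\to\infty$) is longer but equally valid, and has the side benefit of showing that failure of (a) forces $\tau_{q,d}^{(k,i)}(\bA)=\infty$ rather than merely negating (b). Your preliminary observation that positivity of $\tau_{q,d}^{(k,i)}(\bA)$ is automatic from the non-trivial-rows hypothesis (via the monotonicity in Lemma~\ref{lem:tauqi}(b)--(d), or simply by testing at $\bz=\be$) is a genuine simplification: the paper instead rules out $\tau_{q,d}^{(k,i)}(\bA)=0$ by contradiction with (a), which implicitly also relies on the non-emptiness of $\bA^{-1}(\E_q^{(k)})$ coming from the non-trivial rows.
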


%
%

\begin{proof}
(a)$\Rightarrow$(b): \, Let $\bA^{-1}(\E_q^{(k)}) \subseteq \E_d^{(i)}$.
First suppose that $\tau_{q,d}^{(k,i)} (\bA)=0$. Hence by definition, from \eqref{mtau} we have that
$\tau_{q}^{(k)} (\bA\bz)=(\bA\bz)^{(k)}=0$\
for every $\bz\in \E_d^{(i)}$.
Thus $$\bA^{-1}(\E_q^{(k)}) \cap \E_d^{(i)}=\emptyset$$
contradicting the premise.\\
Now suppose that $\tau_{q,d}^{(k,i)} (\bA)=\infty$.
Let $M=\tau_q^{(1)}(\bA\be)$ where $\be=(1,1,\ldots,1)^\textsf{T}\in\R_+^d$.
Then there exists a $\bz \in \partial \aleph_d^{(i)}= \{\bz\in \R_+^d: z^{(i)}=1\}$ such that $\tau_q^{(k)}(\bA\bz)\ge M+d$.
Fix such a $\bz$ and without loss of generality assume that $z_1\ge z_2\ge \ldots\ge z_d$ (otherwise we may arrange {columns} of $\bA$ accordingly). Hence $z^{(i)} =z_i=1$. Define $\bz^*\in \R_+^d$ by converting the last $d-i$ components of $\bz$ to 1. Hence
\[\bz^*=(z_1,\ldots,z_{i-1},1,\ldots,1)^{\top}.\]
Since the components of $\bz^*$ and $\bz$ are ordered and component-wise $\bz^*\ge \bz$, we have $\tau_q^{(k)}(\bA\bz^*)\ge \tau_q^{(k)}(\bA\bz)\ge M+d$.
Now, define \[\bz^*_{\be}:=\bz^*-\be= (z_1-1,\ldots,z_{i-1}-1,0,\ldots,0)^{\top}.\]
Clearly $\bz^*_{\be} \in \R_+^{d}$ as well as $\bz^*_{\be}\notin \E_d^{(i)}$ since $z_{\be}^{*(i)}=z_{\be,i}^*=0$. Note that $\tau_q^{(k)}(\bA\bz^*)\ge M+d$ means at least $k$-elements of $\bA\bz^*$ are larger than $M+d$, whereas $\tau_q^{(k)}(\bA\be)\le \tau_q^{(1)}(\bA\be)=M$
by definition.
Hence all elements of $\bA\be$ are at most $M$.
Since $\bA\bz^*_{\be}=\bA\bz^*-\bA\be$, at least $k$ elements of $\bA\bz^*_{\be}$ are greater or equal to $d$.
Therefore, $\tau_q^{(k)}(\bA\bz^*_{\be}) \ge d>0$. Thus $\bA\bz^*_{\be}\in \E_q^{(k)}$ which is a contradiction.

(b)$\Rightarrow$(a):  Let $\bx\in \E_q^{(k)}$. Then $\tau_q^{(k)}(\bx)>0$.
 Furthermore, let $\bA^{-1}(\bx):=\{\bz\in\R_+^d:\bA\bz=\bx\}$
 and let $\bz_{\bx}\in \bA^{-1}(\bx) \subseteq \R_+^d $. Then by Lemma \ref{lem:tauqi}(a),
 \begin{eqnarray*}
    \tau_d^{(i)}(\bz_{\bx})\geq\frac{\tau_q^{(k)}(\bA\bz_{\bx})}{\tau_{q,d}^{(k,i)}(\bA)}=\frac{\tau_q^{(k)}(\bx)}{\tau_{q,d}^{(k,i)}(\bA)}>0,
 \end{eqnarray*}
implying $\bz_{\bx} \in \E_d^{(i)}$. Hence $\bA^{-1}(\E_q^{(k)}) \subseteq \E_d^{(i)}$.
\end{proof}

\bexam\label{Ex1}
The following example illustrates the equivalence shown in Lemma \ref{Lemma 3.4}.
 Suppose that
\begin{align*}
\bA = \begin{bmatrix}
       1 & 1 & 1 & 0         \\[0.3em]
       1 & 1 & 0 & 1         \\[0.3em]
       1 & 0 & 1 & 1         \\[0.3em]
       0 & 1 & 1 & 1         \\[0.3em]
     \end{bmatrix}
     \end{align*}
and $\bz=(z_1,z_2,z_3,z_4)^{\top}$. Then
\beao
\bx = \bA\bz = (z_{1}+z_{2}+z_{3},z_{1}+z_{2}+z_{4},z_{1}+z_{3}+z_{4},z_{2}+z_{3}+z_{4})^{\top}.
\eeao
For $k=q=4$ we find
\beao
\tau_{4,4}^{(4,1)} (\bA) & = & \sup_{\bz\in \E_{4}^{(1)}} \frac{x^{{(4)}}}{z^{{(1)}}} = 3<\infty, \quad \quad \tau_{4,4}^{(4,2)} (\bA) = \sup_{\bz\in \E_{4}^{(2)}} \frac{x^{{(4)}}}{z^{{(2)}}} = 3<\infty,\\
 \tau_{4,4}^{(4,3)} (\bA) & = & \sup_{\bz\in \E_{4}^{(3)}} \frac{x^{{(4)}}}{z^{{(3)}}} = \infty.
\eeao
The supremum value of 3 in the first two cases is attained at $\bz=(z,z,z,z)^{\top}$ for $z>0$. The final equality is attained by using $\bz^*=(z^4,z^3,z^2,z)^{\top}$ for $z>0$, where $\bz^*\in\E_{4}^{(3)}$.
Hence according to Lemma~\ref{Lemma 3.4} we have
\[ \bA^{-1}(\E_{4}^{(4)}) \subseteq \E_{4}^{(2)} \quad (\text{and by inclusion also $\E_{4}^{{(1)}}$}).\]
This means that the pre-image $\bA^{-1}(\E_{4}^{(4)})$ contains vectors $\bz\in\R_+^4$, whose largest two components are positive, and the other two components can be either zero or positive.

This example can be compared to \citet[Lemma 2.2]{janssen:drees:2016} where only  $\tau_{4,4}^{(4,4)} (\bA)$ is considered, which  for this example  is infinite by
Lemma~\ref{lem:tauqi}(c). The only choice for $\bA$ where  $\tau_{4,4}^{(4,4)} (\bA)<\infty$ are permutations of diagonal matrices with positive diagonal entries; see Remark \ref{rem:jandrees} (iii).
\eexam

\subsection{Main Result}\label{subsec:mainBreiman}

 The key result extending Theorems \ref{th31} and \ref{lem:jd2017thm23}, incorporating general random matrices $\bA\in \R_+^{q\times d}$ and a wide variety of tail sets, is provided in this section. If $\bZ \in \MRV(\alpha,\mu)$ with asymptotically independent components, implying $\mu(\R_+^d\setminus\bCA^{(d-1)}_{d})=\mu(\{\bz\in \R_d^+:z^{(d)}>0\}) =0$, we may seek and find multivariate regular variation in subcones $\E^{(i)}_{d}$ for $i=1,\ldots, d$ as seen in Section~\ref{subsec:hrv}. Theorem~\ref{thm:breiman} provides the appropriate non-null limit and its rate in the presence of such  regular variation for  $\bA\bZ$.




For $k=1,\ldots, q$ and  $\omega\in\Omega$ define $\bA_\omega:=\bA(\omega)$ and
 $$ i_k(\bA_\omega)=\arg\max\{j\in\{1,\ldots,d\}: \tau_{q,d}^{(k,j)}(\bA_\omega)<\infty\},$$
which creates a partition of $\Omega$ given by
\[\Omega_i^{(k)}:= \{\omega\in \Omega: i_k(\bA_{\omega})=i\}, \quad i=1,\ldots,d.\]
We write $\P_i^{(k)} (\,\cdot\,):=\P(\,\cdot\,\cap\,\Omega_i^{(k)})$ and $\bE_i^{(k)}[\,\cdot\,]:=\bE[\,\cdot\,\bone_{\Omega_i^{(k)}}]$.
This means, for fixed $k$, we summarize all $\omega\in\Omega$, such that $\bA_\omega$ yields the same $i_k$, and
we work on measure spaces $(\Omega_i^{(k)}, \mathcal{F}\cap \Omega_i^{(k)}, \P_i^{(k)} )$ indexed by $i=1,\ldots,d$.
 \begin{theorem}\label{thm:breiman}
   Let $i\in\{1,\ldots, d\}$ be fixed and  $\bZ\in \R_+^d$ a random vector  such that $\bZ \in \MRV(\alpha_{i},b_{i}, \mu_{i},\E^{(i)}_d)$ with canonical choice of $b_i$  as in Definition~\ref{def:bit}.
Also let $\bA \in \R_+^{q \times d}$ be a  random matrix with almost surely no trivial rows  independent of $\bZ$.
Furthermore, assume that the following conditions are satisfied for some $k\in\{1,\ldots,q\}$:
\begin{enumerate}[(i)]
 \item
 for some $\delta=\delta(i,k)>0$ we have
 $$\bE_i^{(k)}\left[\tau_{q,d}^{(k,i)}(\bA)^{\alpha_{i}+\delta}\right]
:=\int\limits_{\Omega_i^{(k)}}  \tau_{q,d}^{(k,i)}(\bA)^{\alpha_{i}+\delta} \; \mathrm d\P <\infty,$$
 \item
 {$\mu_{i}(\widetilde \bCA_d^{(i)}(j))>0$}  for all $j=1,\ldots,\binom{d}{i}$.
\end{enumerate}
  Then we have
  \begin{align}\label{eq:multBreimannew}
 \frac{\P_i^{(k)}(\bA\bZ \in t \,\cdot\,)}{\P(\tau_{d}^{(i)}(\bZ)>t )} \to \bE_i^{(k)}\left[ \mu_{i}(\{\bz\in\E_d^{(i)}:\bA\bz \in \,\cdot\,\})\right] =: \overline{\mu}_{i,k}(\cdot),\quad\tto,
  \end{align}
  in $\M(\E^{(k)}_{q})$.
    \end{theorem}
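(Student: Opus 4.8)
The plan is to establish the $\M(\E_q^{(k)})$-convergence \eqref{eq:multBreimannew} by reducing it, via the independence of $\bA$ and $\bZ$, to a pointwise-in-$\omega$ application of the continuous mapping theorem for $\M$-convergence, followed by a dominated-convergence argument over $\Omega_i^{(k)}$. By Definition \ref{Def:M convergence} it suffices to prove, for every bounded continuous $f\colon\E_q^{(k)}\to\R_+$ whose support is bounded away from $\CA_q^{(k-1)}$, that
$$
\frac{\bE_i^{(k)}\!\left[f(\bA\bZ/t)\right]}{\P(\tau_d^{(i)}(\bZ)>t)}\;\longrightarrow\;\int f\,\mathrm{d}\overline{\mu}_{i,k},\qquad\tto .
$$
Since $\bA$ and $\bZ$ are independent, Fubini's theorem rewrites the left-hand side as $\int_{\Omega_i^{(k)}}\big(\bE_{\bZ}[f(\bA_\omega\bZ/t)]/\P(\tau_d^{(i)}(\bZ)>t)\big)\,\P(\mathrm d\omega)$, where $\bE_{\bZ}$ denotes expectation over $\bZ$ with $\bA_\omega$ held fixed.

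\textbf{Pointwise convergence.} Fix $\omega\in\Omega_i^{(k)}$. If $\tau_{q,d}^{(k,i)}(\bA_\omega)=0$, then Lemma \ref{lem:tauqi}(a) forces $\tau_q^{(k)}(\bA_\omega\bz)=0$ for every $\bz\in\R_+^d$, so $f(\bA_\omega\bZ/t)\equiv0$ and this $\omega$ contributes nothing to either side. For the remaining $\omega$ we have $0<\tau_{q,d}^{(k,i)}(\bA_\omega)<\infty$, and Lemma \ref{Lemma 3.4} gives $\bA_\omega^{-1}(\E_q^{(k)})\subseteq\E_d^{(i)}$. I then apply the continuous mapping theorem for $\M$-convergence (see \citet{lindskog:resnick:roy:2014}) to the continuous linear map $\bz\mapsto\bA_\omega\bz$: the necessary properness is again Lemma \ref{lem:tauqi}(a), since any $B$ bounded away from $\CA_q^{(k-1)}$, say with $\tau_q^{(k)}\ge\eta>0$ on $B$, has preimage on which $\tau_d^{(i)}(\bz)\ge\eta/\tau_{q,d}^{(k,i)}(\bA_\omega)>0$, hence bounded away from $\CA_d^{(i-1)}$. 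As the canonical choice of $b_i$ yields $\P(\bZ\in t\,\cdot\,)/\P(\tau_d^{(i)}(\bZ)>t)\to\mu_i$ in $\M(\E_d^{(i)})$, this produces, for each such $\omega$,
$$
\frac{\bE_{\bZ}[f(\bA_\omega\bZ/t)]}{\P(\tau_d^{(i)}(\bZ)>t)}\;\longrightarrow\;\int_{\E_d^{(i)}} f(\bA_\omega\bz)\,\mu_i(\mathrm d\bz),\qquad\tto .
$$

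\textbf{Domination (the main obstacle).} Interchanging $\tto$ with the $\P$-integral over $\Omega_i^{(k)}$ is the crux, and it is here that hypothesis (i) enters. Choosing $\eta>0$ with $f\equiv0$ on $\{\tau_q^{(k)}\le\eta\}$, the event $\{f(\bA_\omega\bZ/t)\neq0\}$ forces $\tau_q^{(k)}(\bA_\omega\bZ)>\eta t$, and Lemma \ref{lem:tauqi}(a) then gives $\tau_d^{(i)}(\bZ)>\eta t/\tau_{q,d}^{(k,i)}(\bA_\omega)$, so that
$$
\frac{\bE_{\bZ}[f(\bA_\omega\bZ/t)]}{\P(\tau_d^{(i)}(\bZ)>t)}\;\le\;\|f\|_\infty\,\frac{\P\big(\tau_d^{(i)}(\bZ)>\eta t/\tau_{q,d}^{(k,i)}(\bA_\omega)\big)}{\P(\tau_d^{(i)}(\bZ)>t)} .
$$
Because $\P(\tau_d^{(i)}(\bZ)>\,\cdot\,)=\P(Z^{(i)}>\,\cdot\,)\in\RV_{-\alpha_i}$, Potter's bounds supply, uniformly in $t$ large, a dominating function of the form $C\|f\|_\infty\big(1+(\tau_{q,d}^{(k,i)}(\bA_\omega)/\eta)^{\alpha_i+\delta}\big)$; this is $\P$-integrable over $\Omega_i^{(k)}$ by hypothesis (i), the constant term being integrable since $\P(\Omega_i^{(k)})\le1$. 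Dominated convergence then lets me pass to the limit inside the integral.

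\textbf{Identification and non-degeneracy.} Combining the two steps, and recalling that by Definition \ref{Def:M convergence} convergence in $\M(\E_q^{(k)})$ is exactly convergence of the integrals $\int f\,\mathrm d(\cdot)$, I obtain the limit $\int_{\Omega_i^{(k)}}\int_{\E_d^{(i)}} f(\bA_\omega\bz)\,\mu_i(\mathrm d\bz)\,\P(\mathrm d\omega)=\int f\,\mathrm d\overline{\mu}_{i,k}$, which is \eqref{eq:multBreimannew}; finiteness of $\overline{\mu}_{i,k}$ on sets bounded away from $\CA_q^{(k-1)}$ follows from the same Potter estimate. Hypothesis (ii), ensuring that $\mu_i$ charges every coordinate hyperplane $\widetilde\CA_d^{(i)}(j)$, is then used only to guarantee that $\overline{\mu}_{i,k}$ is non-null, so that $\bA\bZ$ is genuinely regularly varying on $\E_q^{(k)}$. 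The delicate points to monitor are the uniformity in $t$ of the Potter bound---so that the dominating function is genuinely $t$-free, which requires a short case split according to whether $\eta t/\tau_{q,d}^{(k,i)}(\bA_\omega)$ is large or small---and the verification that the exceptional set $\{\tau_{q,d}^{(k,i)}(\bA)=0\}$ within $\Omega_i^{(k)}$ is inert on both sides.
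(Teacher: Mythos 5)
Your convergence argument is sound and takes a genuinely different route from the paper's. You test against bounded continuous functions supported away from $\CA_q^{(k-1)}$, obtain the pointwise-in-$\omega$ limit by composing the test function with $\bz\mapsto\bA_\omega\bz$ (the properness check via Lemma~\ref{lem:tauqi}(a) and Lemma~\ref{Lemma 3.4} is exactly right), and close with a Potter-bound domination that is integrable by hypothesis (i). The paper instead works with sets $C$ satisfying $\bE_i^{(k)}[\mu_i(\partial\bA^{-1}(C))]=0$, truncates on $\{\tau_{q,d}^{(k,i)}(\bA)\le M\}$, controls the complementary piece by the univariate Breiman theorem (letting $M\to\infty$ at the end), and interchanges limit and integral by Pratt's lemma. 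Your route buys a cleaner treatment---no boundary condition on $\bA^{-1}(C)$, no truncation level to dispose of---at the cost of the Potter-bound case split you already flag; both are legitimate.

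There is, however, a genuine gap: you never prove that $\overline{\mu}_{i,k}$ is non-null, and in this paper that is part of the assertion, since $\M(\E_q^{(k)})$ is by definition a space of \emph{non-zero} measures and the convergence ``in $\M(\E_q^{(k)})$'' is what delivers $\bA\bZ\in\MRV(\alpha_i,\overline{\mu}_{i,k},\E_q^{(k)})$. Saying that hypothesis (ii) ``guarantees'' non-nullity is not a proof: roughly half of the paper's argument is devoted to exactly this point, and it is not obvious. One must show $\mu_i(\bA_\omega^{-1}(\E_q^{(k)}))>0$ for $\omega\in\Omega_i^{(k)}$ (when $\P(\Omega_i^{(k)})>0$). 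The paper argues that $\bA_\omega^{-1}(\E_q^{(k)})$ must meet $\CA_d^{(i)}\setminus\CA_d^{(i-1)}$---otherwise $\bA_\omega^{-1}(\E_q^{(k)})\subseteq\E_d^{(i+1)}$ and Lemma~\ref{Lemma 3.4} would force $\tau_{q,d}^{(k,i+1)}(\bA_\omega)<\infty$, contradicting the maximality built into the definition of $\Omega_i^{(k)}$---and then uses the non-negativity of the entries of $\bA_\omega$ to upgrade a single point of intersection to containment of an entire hyperplane $\widetilde\CA_d^{(i)}(j^*)$, which carries positive $\mu_i$-mass precisely by (ii) (with a separate, easier argument for $i=d$). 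Without some such argument your limit could a priori be the zero measure and the theorem, as stated, would not be established; the rest of your plan I would accept.
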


\begin{proof}
If $\P(\Omega_i^{(k)})=0$ then \eqref{eq:multBreimannew} is trivially satisfied because the left and right hand side are zero.
Thus we assume that $\P(\Omega_i^{(k)})>0$.
 Let   $C\subset \E_q^{(k)}$ be a Borel set which is bounded away from $\bCA^{(k-1)}_{q}$ and satisfies $\bE^{(k)}_i\left[\mu_i(\partial\bA^{-1}(C))\right] =0$. Then there exists a constant $\delta_C$ such that ${\tau_q^{(k)}(\bx)} = x^{(k)}>\delta_C$
 for all {$\bx\in C$.} Using Lemma~\ref{lem:tauqi}(a), we have for all $t>0$, $M>0$
 \begin{eqnarray*}
    \P_i^{(k)}(\bA\bZ\in tC,\tau_{q,d}^{(k,i)}(\bA)>M)
    &\leq&\P_i^{(k)}(\tau_q^{(k)}(\bA\bZ)>t\delta_C,\tau_{q,d}^{(k,i)}(\bA)>M)\\
        &\leq& \P(\tau_{q,d}^{(k,i)}(\bA){\tau_d^{(i)}}(\bZ)>t\delta_C,\tau_{q,d}^{(k,i)}(\bA)>M,\Omega_i^{(k)}).
 \end{eqnarray*}
 Since $\tau_d^{(i)}(\bZ)=Z^{(i)}\in\RV_{-\alpha_{i}}$,
 and $\bA$ and $\bZ$ are assumed to be independent, the univariate version of Breiman's Theorem in combination with $\bE_i^{(k)}[\tau_{q,d}^{(k,i)}(\bA)^{\alpha_{i}+\delta}] <\infty$
 yields
 \begin{eqnarray*}
    \limsup_{t\to\infty}\frac{\P_i^{(k)}(\bA\bZ\in tC,\tau_{q,d}^{(k,i)}(\bA)>M)}{\P(\tau_{d}^{(i)}(\bZ)>t )}
        &\leq&\limsup_{t\to\infty}\frac{\P(\bone_{\{\tau_{q,d}^{(k,i)}(\bA)>M\}\cap \Omega_i^{(k)}} \tau_{q,d}^{(k,i)}(\bA) \tau_d^{(i)}(\bZ)>t\delta_C)}{\P(\tau_{d}^{(i)}(\bZ)>t )}\\
        &=&\delta_C^{-\alpha_{i}}\bE[\tau_{q,d}^{(k,i)}(\bA)^{\alpha_{i}}\bone_{\{\tau_{q,d}^{(k,i)}(\bA)>M\}\cap\Omega_i^{(k)}}].
 \end{eqnarray*}
 Note that $\bA^{-1}(C):=\{\bz\in\R_+^d:\bA\bz\in C\}$ is again a.s. bounded away from $\bCA^{(i-1)}_{d}$, since for {$\bx\in C$}, $\omega\in {\Omega_i^{(k)}}$,
 and {$\bz_{\bx}\in \bA_{\omega}^{-1}(C)\subseteq \R_+^d$ we have by Lemma~\ref{lem:tauqi}(a),}
 \begin{eqnarray} \label{eq:1}
    \tau_d^{(i)}(\bz_{\bx})\geq\frac{\tau_q^{(k)}(\bA_\omega\bz_{\bx})}{\tau_{q,d}^{(k,i)}(\bA_\omega)}=\frac{\tau_q^{(k)}(\bx)}{\tau_{q,d}^{(k,i)}(\bA_\omega)}
    >\frac{\delta_C}{\tau_{q,d}^{(k,i)}(\bA_\omega)}>0
 \end{eqnarray}
 and, thus, {$\P_i^{(k)}(\bA^{-1}(C)\subseteq \E_d^{(i)})=1$}.
  Hence abbreviating $\ba:=\bA_\omega$ and conditioning on $\bA$, by independence of $\bZ$ and $\bA$, we obtain
 \begin{eqnarray*}
    \lim_{t\to\infty}\frac{\P_i^{(k)}(\bA\bZ\in tC,\tau_{q,d}^{(k,i)}(\bA)\leq M)}{\P(\tau_{d}^{(i)}(\bZ)>t )} &=&
        \lim_{t\to\infty}\int_{\{\tau_{q,d}^{(k,i)}(\ba)\leq M\}}\frac{\P(\bZ\in t\ba^{-1}(C))}{\P (\tau_{d}^{(i)}(\bZ)>t)}\,\mathrm d\P_i^{(k)}(\ba)\\
        &=&\int_{\{\tau_{q,d}^{(k,i)}(\ba)\leq M\}}\mu_{i}(\ba^{-1}(C))\,\mathrm d\P_{i}^{(k)}(\ba)\\
        &=&\bE_i^{(k)}\left[ \mu_{i}\left(\bA^{-1}(C)\bone_{\{\tau_{q,d}^{(k,i)}(\bA)\leq M\}}\right)\right],
 \end{eqnarray*}
 where we used for the third equality that  $\bE_i^{(k)}[\mu_{i}(\partial \bA^{-1}(C))]=0$  in combination with Pratt's lemma \citep{pratt:1960}, since for $\tau_{q,d}^{(k,i)}(\bA_\omega)\leq M$ we have for the integrand
 \begin{eqnarray*}
        \frac{\P(\bZ\in t \bA_\omega^{-1}(C))}{\P(\tau_{d}^{(i)}(\bZ)>t)} & \leq & \frac{\P(\tau_{q,d}^{(k,i)}(\bA_\omega)\tau_d^{(i)}(\bZ)>t\delta_C)}{\P(\tau_{d}^{(i)}(\bZ)>t)}\\
           & \leq & \frac{\P(M\tau_d^{(i)}(\bZ)>t\delta_C)}{\P(\tau_{d}^{(i)}(\bZ)>t)}\to M^{\alpha_{i}}\delta_C^{-\alpha_{i}}, \quad t\to\infty.
 \end{eqnarray*}
We need to show that $\bE_i^{(k)}[\mu_{i}(\bA^{-1}(C))]<\infty$.
Define $B_d^{(i)}(\delta):=\{\bz\in \R_+^d:\tau_d^{(i)}(\bz)\leq \delta\}$.
 By the homogeneity of $\mu_{i}$  and \eqref{eq:1} we have
 \begin{eqnarray*}
   \bE_i^{(k)}\left[\mu_{i}(\bA^{-1}(C))\right]
   &\leq & \bE_i^{(k)}\left[\mu_{i}\left(B_d^{(i)}\left(\delta_C/\tau_{q,d}^{(k,i)}(\bA)\right)^c\right)\right]\\
   & = &\mu_{i}\left(\left(B_d^{(i)}(\delta_C)\right)^c\right)\bE_i^{(k)}\left[\tau_{q,d}^{(k,i)}(\bA)^{\alpha_{i}}\right]<\infty.
 \end{eqnarray*}
To finish the proof it remains to show that $\bE_i^{(k)}[\mu_{i}(\bA^{-1}(\E_q^{(k)}))] >0.$

\textbf{Case 1:} {Suppose $1\le i<d$.  Let $\omega\in\Omega_i^{(k)}$ .}
We know from Lemma~\ref{Lemma 3.4} that
$\bA^{-1}_\omega(\E_q^{(k)}) \subseteq \E_d^{(i)}$.
By definition, $\bCA_d^{(i)}\backslash \bCA_d^{(i-1)} \subset \E_d^{(i)}$.
 We claim that  $$(\bCA_d^{(i)}\backslash \bCA_d^{(i-1)})\cap \bA^{-1}_{\omega}(\E_q^{(k)})\not=\emptyset.$$ If not, then we have $\bA^{-1}_{\omega}(\E_q^{(k)}) \subseteq \E_d^{(i)} \setminus (\bCA_d^{(i)}\backslash \bCA_d^{(i-1)})
 =\R_+^d\backslash  \bCA_d^{(i)}= \E_d^{(i+1)}$.
 Therefore by Lemma \ref{Lemma 3.4}, $\tau_{q,d}^{(k,i+1)}(\bA_{\omega}) <\infty$. But this is a contradiction to the definition of $\Omega_i^{(k)}$ {since $\omega\in\Omega_i^{(k)}$}.\\
 So let $\bz\in (\bCA_d^{(i)}\backslash \bCA_d^{(i-1)})\cap \bA^{-1}_{\omega}(\E_q^{(k)})$.
 Then by \eqref{eq:cminusc}, we have $\bz \in \widetilde \bCA_d^{(i)}(j^*)$ for some $1\le j^*\le \binom{d}{i}$.
 Let $I_{\bz}:=\{j\in\{1,\ldots,d\}:\,z_j>0\}$. {Clearly,
 $$\bCA_d^{(i)}(j^*)=\{\bz\in \R_+^d: z_j>0 \; \text{for } j\in I_{\bz}\mbox{ and }\; z_j=0 \; \text{for } j\in \{1,\ldots,d\}\backslash  I_{\bz}\}.$$
Hence for every $\bz^* \in\bCA_d^{(i)}(j^*)$} we have that some component of $\bA_{\omega}\bz^*$ is positive if and only if the corresponding component of $\bA_{\omega}\bz$ is positive, since $\bA_{\omega}$ has only non-negative entries.
 Thus $\bA_{\omega}\bz^*\in \E_q^{(k)} $, i.e., $\bz^*\in \bA_{\omega}^{-1}(\E_q^{(k)})$.
Hence, we get that
 \[\widetilde\bCA_d^{(i)}(j^*) \subseteq \bA_{\omega}^{-1}(\E_q^{(k)}) \subseteq \E_d^{(i)}.\]
{Since} due to assumption (ii), $\mu_{i}$ has positive mass on each of the $\binom{d}{i}$ hyperplanes $\widetilde \bCA_d^{(i)}(j)$, this results in
 \[\mu_{i}(\bA_{\omega}^{-1}(\E_q^{(k)})) \ge \mu_{i}(\widetilde \bCA_d^{(i)}(j^*))\geq \min_{j} \mu_{i}(\widetilde \bCA_d^{(i)}(j)) >0, \]
and
$$\bE_i^{(k)}[\mu_{i}(\bA^{-1}(\E_q^{(k)}))]\geq \min_{j} \mu_{i}(\widetilde \bCA_d^{(i)}(j)) >0,$$
which proves the claim for $1\leq i<d$.

\textbf{Case 2:} {Suppose $i=d$. Let $\omega\in\Omega_d^{(d)}$}. Take $\bz\in \bA_{\omega}^{-1}(\E_q^{(k)}) \subseteq \E_d^{(d)}$, then all components of $\bz$ and  $\bA_{\omega}\bz\in\E_q^{(k)}$ are positive. Thus $\bA_{\omega}$ has no trivial row and we get that for every $\bz^*\in \E_d^{(d)}$ also $\bA_{\omega}\bz^*$ has only positive components, i.e., $\bA_{\omega}\bz^*\in \E_q^{(k)}$.
 This results in
 $\E_d^{(d)}=\bA_{\omega}^{-1}(\E_q^{(k)})$ and $\bE_d^{(k)}[\mu_{d}(\bA^{-1}(\E_q^{(k)}))]=\mu_{d}(\E_d^{(d)})>0$.
 \end{proof}


\begin{remark} \label{remarkCA}
 The condition that  {$\mu_{i}(\widetilde \bCA_d^{(i)}(j))>0$}  for all $j=1,\ldots,\binom{d}{i}$ could be relaxed to  {$\mu_{i}(\widetilde \bCA_d^{(i)}(j))>0$}  for \emph{at least one} $j\in\{1,\ldots,\binom{d}{i}\}$, but showing that the limit measure is non-zero turns out to be a cumbersome exercise and needs to be done with proper care. In many examples, the measures $\mu_i$ turn out to be exchangeable with respect to their co-ordinates and the assumption being true for all $j=1,\ldots,\binom{d}{i}$ is not uncommon.
 One such example is given in Example~\ref{ex:indpar} where $Z_{i}$ are iid Pareto$(\alpha)$ for $\alpha>0$ and we have $\mu_i(\widetilde\bCA_d^{(i)} (j))>0$ for all $j= 1,\ldots, \binom{d}{i}$.
 \end{remark}

 Theorem~\ref{thm:breiman} provides regular variation limit measures for sets in $\E_q^{(k)}$ restricted to $\Omega_i^{(k)}$, whenever the two conditions are satisfied and, as a consequence, we have the following limit probabilities.

\begin{theorem}\label{prop:mainreg}
Let  $\bZ\in \R_+^d$ be a random vector  such that for all $i =1,\ldots,d$, we have
$\bZ \in \MRV(\alpha_{i},b_{i}, \mu_{i},\E^{(i)}_d)$ with canonical choice of $b_i$ as in Definition~\ref{def:bit}. Moreover,
for all {$i=1,\ldots,d-1$} we assume $b_i(t)/b_{i+1}(t)\to \infty$ as $t\to\infty$.
Let $\bA \in \R_+^{q \times d}$ be a  random matrix  with almost surely no trivial rows independent of $\bZ$.
Furthermore, let $C\subset \E_q^{(k)}$ for $k\in\{1,\ldots,q\}$
be a  Borel set bounded away from $\bCA_q^{(k-1)}$ with $\bE_i^{(k)}[\mu_i(\partial\bA^{-1}(C))]=0$ for all $i=1,\ldots,d$.\\
Suppose further that
\begin{enumerate}[(i)]
\item
 $\bE_i^{(k)}\left[\tau_{q,d}^{(k,i)}(\bA)^{\alpha_{i}+\delta}\right] <\infty$ for some $\delta=\delta(i,k)>0$,
 \item   {$\mu_{i}(\widetilde \bCA_d^{(i)}(j))>0$}  for all $j=1,\ldots,\binom{d}{i}$.
\end{enumerate}
 Then the following results hold.
\begin{enumerate}[(a)]
\item We have
\begin{eqnarray}\label{eq:hidsum}
\P(\bA\bZ \in t C)
    & = &\sum\limits_{i=1}^d
{\P(Z^{(i)}>t)} \, \left[\bE_i^{(k)}[\mu_{i}(\bA^{-1}(C))] +o(1)\right], \quad t\to\infty.
\end{eqnarray}
\item Define
\begin{align}\label{eq:ik*}
i_k^* := \arg\min\{i\in \{1,\ldots,d\}: \P(\Omega_i^{(k)})>0\}.
    \end{align}
    Then we have
 \begin{align*}
 \frac{\P(\bA\bZ \in tC)}{\P(Z^{(i_k^*)}>t )} \to \bE_{i_k^*}^{(k)}\big[ \mu_{i_k^*}(\bA^{-1}(C))\big] \, =\overline{\mu}_{i_{k}^{*},k}(C)=:
 \overline{\mu}_{k}(C),\quad\tto,
  \end{align*}
   in $\M(\E^{(k)}_{q})$. Hence, $\bA\bZ \in \MRV(\alpha_{i_{k}^{*}},\overline{\mu}_{k},\E_q^{(k)})$.

\end{enumerate}

\end{theorem}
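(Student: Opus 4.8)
The plan is to reduce the whole statement to the single-index result Theorem~\ref{thm:breiman} by using the partition $\Omega = \bigcup_{i=1}^d \Omega_i^{(k)}$ already in place. Because $\P(\bA\bZ \in tC) = \sum_{i=1}^d \P_i^{(k)}(\bA\bZ \in tC)$ and the hypotheses (i), (ii) together with the continuity requirement $\bE_i^{(k)}[\mu_i(\partial\bA^{-1}(C))] = 0$ are assumed for \emph{every} $i$, Theorem~\ref{thm:breiman} applies on each block $\Omega_i^{(k)}$ with $\P(\Omega_i^{(k)}) > 0$. This is the engine of the argument; parts (a) and (b) are then accounting on top of it.

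For part (a), I would fix $i$ and read off from Theorem~\ref{thm:breiman}, using $\tau_d^{(i)}(\bZ) = Z^{(i)}$, the per-block asymptotics $\P_i^{(k)}(\bA\bZ \in tC) = \P(Z^{(i)}>t)\,(\bE_i^{(k)}[\mu_i(\bA^{-1}(C))] + o(1))$. When $\P(\Omega_i^{(k)}) = 0$ both $\P_i^{(k)}(\bA\bZ \in tC)$ and $\bE_i^{(k)}[\mu_i(\bA^{-1}(C))]$ vanish identically, so that summand is simply $0$. Summing the finitely many identities over $i = 1,\ldots,d$ yields \eqref{eq:hidsum} at once.

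For part (b) the task is to locate the dominant summand of \eqref{eq:hidsum}. Dividing by $\P(Z^{(i_k^*)}>t)$ gives $\P(\bA\bZ \in tC)/\P(Z^{(i_k^*)}>t) = \sum_{i=1}^d (\P(Z^{(i)}>t)/\P(Z^{(i_k^*)}>t))\,(\bE_i^{(k)}[\mu_i(\bA^{-1}(C))] + o(1))$. For $i < i_k^*$ minimality of $i_k^*$ forces $\P(\Omega_i^{(k)}) = 0$, so the summand is identically $0$; for $i = i_k^*$ the prefactor is $1$ and the summand tends to $\bE_{i_k^*}^{(k)}[\mu_{i_k^*}(\bA^{-1}(C))]$; for $i > i_k^*$ I would show the prefactor $\P(Z^{(i)}>t)/\P(Z^{(i_k^*)}>t) \to 0$, killing those summands. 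To upgrade this to $\M(\E_q^{(k)})$-convergence I would run the same decomposition at the level of the normalized measures $\P_i^{(k)}(\bA\bZ \in t\,\cdot\,)/\P(Z^{(i)}>t)$, each of which converges in $\M(\E_q^{(k)})$ by Theorem~\ref{thm:breiman}: the $i = i_k^*$ block survives with limit $\overline{\mu}_{i_k^*,k}$, while the $i > i_k^*$ blocks carry a vanishing scalar prefactor. Non-triviality of $\overline{\mu}_k = \overline{\mu}_{i_k^*,k}$ is precisely the inequality $\bE_{i_k^*}^{(k)}[\mu_{i_k^*}(\bA^{-1}(\E_q^{(k)}))] > 0$ proved inside Theorem~\ref{thm:breiman}, so that $\bA\bZ \in \MRV(\alpha_{i_k^*}, \overline{\mu}_k, \E_q^{(k)})$.

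The only genuinely technical point, and the one I expect to be the main obstacle, is the tail comparison $\P(Z^{(i)}>t)/\P(Z^{(i_k^*)}>t) \to 0$ for $i > i_k^*$. Since $b_i$ is the canonical scaling function (Definition~\ref{def:bit}), it satisfies $\P(Z^{(i)} > b_i(t)) \sim 1/t$, i.e.\ $b_i$ is the asymptotic inverse of $1/\P(Z^{(i)}>\cdot) \in \RV_{\alpha_i}$. Substituting $t = b_{i_k^*}(s)$ and invoking the uniform convergence theorem for regularly varying functions, the hypothesis $b_{i_k^*}(s)/b_i(s) \to \infty$ (an iterate of the assumed $b_j/b_{j+1} \to \infty$) translates into the desired tail ratio vanishing. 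This de~Bruijn-type inversion, carried out at the level of $\M$-convergence rather than for a single set, is the crux; the rest is manipulation of the finite sum in \eqref{eq:hidsum}.
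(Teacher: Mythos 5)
Your proposal is correct and follows essentially the same route as the paper's proof: partition $\Omega$ into the blocks $\Omega_i^{(k)}$, apply Theorem~\ref{thm:breiman} on each block to get \eqref{eq:hidsum}, and then identify the dominant term $i=i_k^*$ by noting that the terms with $i<i_k^*$ vanish identically and that $\P(Z^{(i)}>t)/\P(Z^{(i_k^*)}>t)\sim b_{i_k^*}^{\leftarrow}(t)/b_i^{\leftarrow}(t)\to 0$ for $i>i_k^*$ via the canonical scaling functions. The extra detail you give on the inversion step and on non-triviality of $\overline{\mu}_k$ is consistent with what the paper leaves implicit.
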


\begin{proof}~
\begin{enumerate}[(a)]
    \item   Since $\{\Omega_i^{(k)}, 1\le i \le d\}$ forms a partition of $\Omega$,
$\P(\bA\bZ \in t C) =\sum_{i=1}^d \P_i^{(k)}(\bA\bZ \in t C)$.
Hence using \eqref{eq:multBreimannew} and observing that
$$ \P(\tau_{d}^{(i)}(\bZ)>t )=\P(Z^{(i)}>t) \sim 1/b_{i}^{\leftarrow}(t),\quad t\to\infty,$$
we have
\begin{align*}
\P(\bA\bZ \in t C) = \sum\limits_{i=1}^d
\left[{\P(Z^{(i)}>t)} \, \big[\bE_i^{(k)}[\mu_{i}(\bA^{-1}(C))] + o(1)\big]\right], \quad t\to\infty.
\end{align*}

\item Now, since $\{\Omega_i^{(k)}, 1\le i \le d\}$ forms a partition of $\Omega$, there exists a $j\in\{1,\ldots,d\}$  with $\P(\Omega_{j}^{(k)})>0$ and hence, $i_k^*$ is well-defined.
Note that using \eqref{eq:hidsum}, we have for any Borel set $C\subset \E_q^{(k)}$  bounded away from $\bCA_q^{(k-1)}$ with $\bE_i^{(k)}(\mu_i(\partial \bA^{-1}(C)))=0$ for $i=1,\ldots,d$, the following asymptotic behavior:
\begin{eqnarray*}
\frac{\P(\bA\bZ \in tC)}{\P(Z^{(i_k^*)}>t)}
& = & \Big[\bE_{i_k^*}^{(k)}[\mu_{i_k^*}(\bA^{-1}(C))]+ o(1)\Big]\\
&&+ \sum_{i=i_k^*+1}^d \left[\frac{{\P(Z^{(i)}>t)}}{\P(Z^{(i_k^*)}>t)} \, \bE_i^{(k)}[\mu_{i}(\bA^{-1}(C))]+\frac{o(\P(Z^{(i)}>t)))}{\P(Z^{(i_k^*)}>t)}\right]\\
& \to& \bE_{i_k^*}^{(k)}\big[\mu_{i_k^*}(\bA^{-1}(C))\big] = \overline{\mu}_k(C), \quad  t\to\infty,
\end{eqnarray*}
    since for all $i=i_k^*+1,\ldots, d$ we have {$b_{i_k^*}(t)/b_{i}(t) \to \infty$}, and hence
    \begin{align*}
        \frac{{\P(Z^{(i)}>t)}}{\P(Z^{(i_k^*)}>t)} \sim \frac{b_{i_k^*}^{\leftarrow}(t)}{b_i^{\leftarrow}(t)} \to 0,\quad t\to\infty.
    \end{align*}
    Hence $\bA\bZ \in \MRV(\alpha_{i_{k}^{*}},\overline{\mu}_{k},\E_q^{(k)})$.
\end{enumerate}
\end{proof}

\begin{remark}\label{rem:aboutCA}
When we assume  $\bZ \in \MRV(\alpha_{i},b_{i}, \mu_{i},\E^{(i)}_d)$ for all $i=1,\ldots, d$, with \linebreak $b_i(t)/b_{i+1}(t)\to \infty$ as $t\to\infty$ for 
all {$i=1,\ldots,d-1$}, it results in restricting the supports for the measures $\mu_i$ to  $\bCA_d^{(i)}\setminus \CA_d^{(i-1)}$; a specific case is discussed in Example \ref{ex:indpar}.
\end{remark}


\begin{remark}
 If $\bZ$ has asymptotically independent components, and each component has distribution tail $\P(Z_j>t)\sim \kappa_j t^{-\al}$ as $t\to\infty$ for some $\al,\kappa_j>0$, then we get a generalization of Theorem~3.2 of \citet{kley:kluppelberg:reinert:2016}. We investigate such structures further in the next section.
\end{remark}

 The following example illustrates the image and pre-image of sets under the map $\bA:\bz\mapsto \bA\bz=\bx$ as well as the regions, where the limit measure is positive in a 3-dimensional setting.
We emphasize that for this example our theory is not really necessary, the calculations can be done by hand, but it  helps in clarifying the ideas and the notation needed for more complex examples to follow.

\bexam \label{Ex2}
Let $\bZ=(Z_{1},Z_{2},Z_{3})^{\top}$ have iid Pareto($\alpha$) marginal distributions with $\P(Z_i>z)=z^{-\alpha}, z>1$ for some $\alpha>0$ as in Example~\ref{ex:indpar}.
Then $\bZ\in \MRV(i\alpha,b_i,\mu_i,\E_3^{(i)})$ for $i=1,2,3$ where $b_1(t)= (3t)^{1/\alpha}, b_2(t)= (3t)^{1/(2\alpha)}$ and $b_3(t)= t^{1/(3\alpha)}$ are the canonical choices and
\begin{align*}
   & \mu_1\Big(\bigcup\limits_{i=1}^3 \left\{\bv\in \R_+^3: v_i>z_i\right\}\Big) = \frac 13(z_1^{-\alpha}+z_2^{-\alpha}+z_3^{-\alpha}),\\
    & \mu_2\Big(\bigcup\limits_{1\le i\neq j\le 3}\left\{\bv\in \R_+^3: v_i>z_i, v_j>z_j\right\}\Big) =  \frac 13\{(z_1z_2)^{-\alpha}+(z_2z_3)^{-\alpha}+(z_3z_1)^{-\alpha}\},\\     & \mu_3\left((z_1,\infty)\times(z_2,\infty)\times(z_3,\infty)\right) =  (z_1z_2z_3)^{-\alpha}
\end{align*}
for $z_1,z_2,z_3>0$. Consider the matrix
\begin{align*}
\bA = \begin{bmatrix}
       1 & 1 & 0         \\[0.3em]
       0 & 1 & 1         \\[0.3em]
       1 & 0 & 1
     \end{bmatrix}.
     \end{align*}
Then, under the map $\bA:\bz\mapsto \bA\bz=\bx$, the  region $C=(1,\infty)^{3}\subset\E_{3}^{(3)}$  has pre-image given by 
$$\bA^{-1}(C)= \{\bz\in \R_{+}^{3}: z_{1}>1,z_{2}>1\}\cup \{\bz\in \R_{+}^{3}: z_{2}>1,z_{3}>1\}\cup \{\bz\in \R_{+}^{3}: z_{3}>1,z_{1}>1\}.$$
It is easy to check that $i_3^*=2$, as defined in \eqref{eq:ik*}. Hence, with $\mu_2({\bA}^{-1}(C))=1$ we obtain
\[\P(\bA\bZ\in tC)  \sim \P(Z^{(2)}>t) \mu_2({\bA}^{-1}(C)) \sim 3t^{-2\alpha},\quad t\to\infty.\]
\begin{figure}[t]
    \centering
    \includegraphics[width=0.4\textwidth]{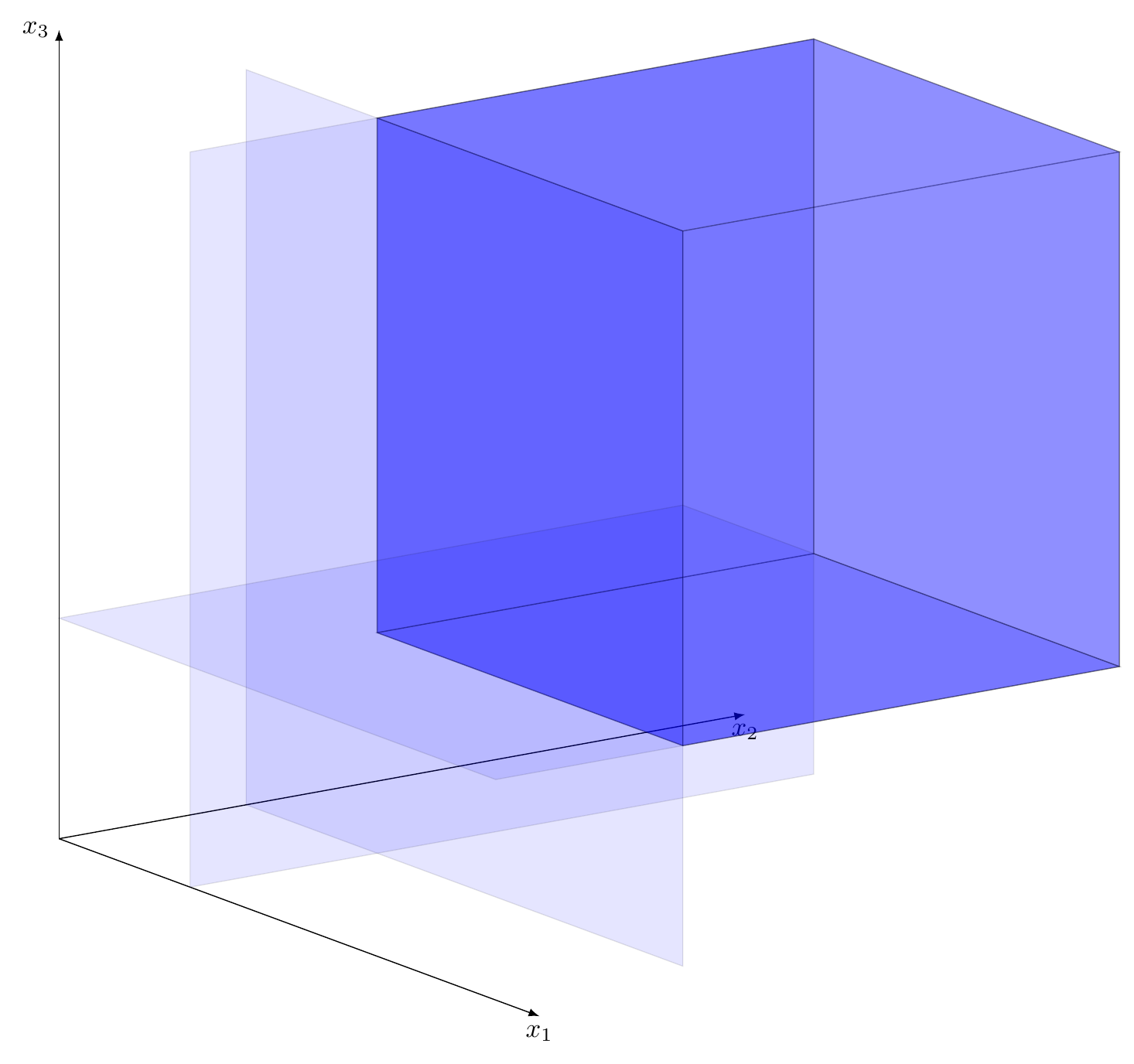} \hspace{1cm}
    \includegraphics[width=0.4\textwidth]{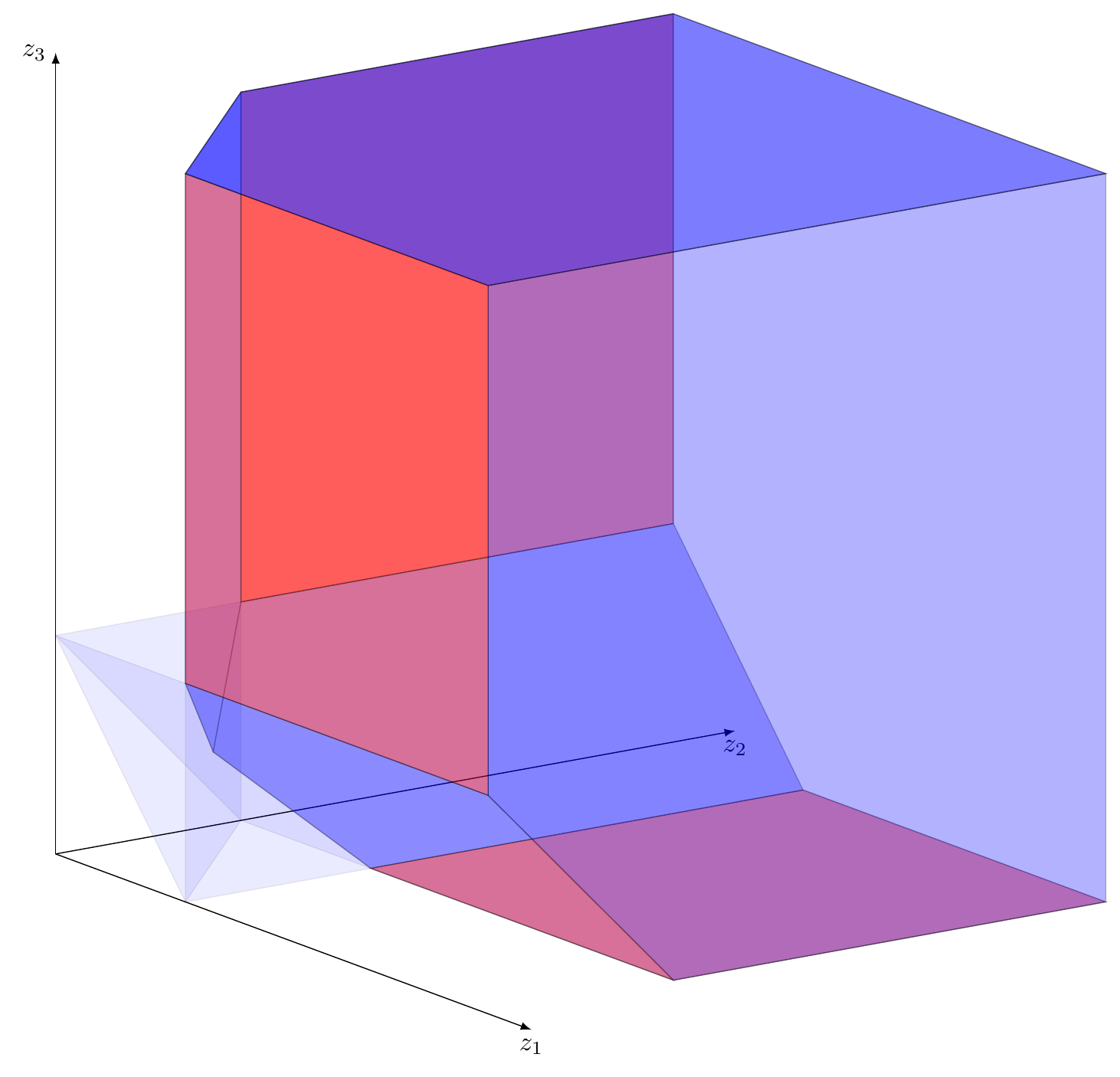}
    \caption{The left plot has the region $C=(\bone,\infty)^3$ in blue in $\bx=(x_1,x_2,x_3)$ co-ordinates. The right plot has the region $\bA^{-1}(C)$ in blue from Example~\ref{Ex2} in $\bz=(z_1,z_2,z_3)$ co-ordinates. The red region shows the support of the measure $\mu_2$.}
    \label{fig:transform}
\end{figure}
Figure \ref{fig:transform} gives a plot of the region $C$ and the transformed region $\bA^{-1}(C)$ colored in blue.
The red region on the right plot shows the support of the measure $\mu_2$.
\eexam

\begin{remark}
In Theorem 3.7, we ascertain  the asymptotic behavior of $\P(\bA\bZ\in tC)$  for certain sets $C\subset\E_q^{(k)}$. Specifically, from Theorem \ref{prop:mainreg} (b) we have
$$\P(\bA\bZ\in tC)\;  = \; {\P(Z^{(i_k^*)}>t)} \,\bE_{i_k^*}^{(k)}[\mu_{i_k^*}(\bA^{-1}(C))]  +o(\P(Z^{(i_k^*)}>t),\quad t\to\infty, $$ where $i_k^*$ is defined in \eqref{eq:ik*}.
If $\bE_{i_k^*}^{(k)}[\mu_{i_k^*}(\bA^{-1}(C))]=0$, we only get $$\P(\bA\bZ\in tC) \, = \,  o(\P(Z^{(i_k^*)}>t), \quad\quad t\to\infty.$$
However, under certain assumptions on $\bA$ and $C$ we can say more about the precise rates, illustrated in the following results.
\end{remark}

\begin{proposition}\label{prop:mainregext}
Let the assumptions and notations of  Theorem \ref{prop:mainreg} hold.
Define
\begin{align}\label{def:iota}
\bar{\iota}= \bar{\iota}_C:= \min\{d, \inf\{i\in\{i_k^*,\ldots,d\}: \bE_i^{(k)}\left[\mu_i(\bA^{-1}(C))\right]>0\}\}.
\end{align}
Suppose for all $i=i^*_k,\ldots,\bar{\iota}-1$ and $\omega\in \Omega_i^{(k)}$ that $\bA_{\omega}^{-1}(C)=\emptyset$.  Then
\begin{align}\label{eq:limiota}
\P(\bA\bZ\in tC)= {\P(Z^{(\bar{\iota})}>t)} \,\bE_{\bar{\iota}}^{(k)}[\mu_{\bar{\iota}}(\bA^{-1}(C))] + o(\P(Z^{(\bar{\iota})}>t)),\quad t\to\infty.
\end{align}
 \end{proposition}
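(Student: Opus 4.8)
The plan is to exploit the partition $\{\Omega_i^{(k)}\}_{i=1}^d$ of $\Omega$ together with the term-by-term asymptotics already furnished by Theorem~\ref{thm:breiman}, and to argue that under the additional hypothesis the single index $\bar\iota$ governs the whole tail at the rate $\P(Z^{(\bar\iota)}>t)$. First I would write
$$\P(\bA\bZ\in tC)=\sum_{i=1}^d \P_i^{(k)}(\bA\bZ\in tC),$$
and then dispose of the indices $i<\bar\iota$ \emph{exactly}, not merely asymptotically. This exactness is the whole point: the representation \eqref{eq:hidsum} of Theorem~\ref{prop:mainreg}(a) only controls these terms up to $\P(Z^{(i)}>t)\,o(1)$, and since $b_i(t)/b_{\bar\iota}(t)\to\infty$ for $i<\bar\iota$ we have $\P(Z^{(i)}>t)/\P(Z^{(\bar\iota)}>t)\to\infty$, so that $o(1)$ error bound is far too weak to guarantee these potentially dominant contributions are negligible relative to the target rate.

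The crucial step is therefore to show these low-index terms vanish identically. For $i<i_k^*$ this is immediate: $\P(\Omega_i^{(k)})=0$ by \eqref{eq:ik*}, hence $\P_i^{(k)}(\bA\bZ\in tC)=0$. For $i_k^*\le i<\bar\iota$ the extra hypothesis supplies $\bA_\omega^{-1}(C)=\emptyset$ for every $\omega\in\Omega_i^{(k)}$; since $\bA_\omega$ acts linearly, $\bA_\omega^{-1}(tC)=t\,\bA_\omega^{-1}(C)=\emptyset$, and conditioning on $\bA$ and using independence of $\bA$ and $\bZ$ gives
$$\P_i^{(k)}(\bA\bZ\in tC)=\int_{\Omega_i^{(k)}}\P\big(\bZ\in \bA_\omega^{-1}(tC)\big)\,\mathrm d\P(\omega)=0.$$
Thus $\P(\bA\bZ\in tC)=\sum_{i=\bar\iota}^d \P_i^{(k)}(\bA\bZ\in tC)$, with the slow-decaying nuisance terms now removed at the source rather than bounded.

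It then remains to read off the dominant rate from the surviving indices. Since $C\subset\E_q^{(k)}$ is bounded away from $\bCA_q^{(k-1)}$ and satisfies $\bE_i^{(k)}[\mu_i(\partial\bA^{-1}(C))]=0$, I would apply \eqref{eq:multBreimannew} to each $i\in\{\bar\iota,\ldots,d\}$ to obtain $\P_i^{(k)}(\bA\bZ\in tC)=\P(Z^{(i)}>t)\big[\bE_i^{(k)}[\mu_i(\bA^{-1}(C))]+o(1)\big]$, where each $\bE_i^{(k)}[\mu_i(\bA^{-1}(C))]$ is finite by the bound established in the proof of Theorem~\ref{prop:mainreg}. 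Dividing by $\P(Z^{(\bar\iota)}>t)$ and letting $t\to\infty$, the term $i=\bar\iota$ tends to the finite constant $\bE_{\bar\iota}^{(k)}[\mu_{\bar\iota}(\bA^{-1}(C))]$, while for each $i>\bar\iota$ the prefactor ratio $\P(Z^{(i)}>t)/\P(Z^{(\bar\iota)}>t)\sim b_{\bar\iota}^{\leftarrow}(t)/b_i^{\leftarrow}(t)\to 0$ (exactly as in the proof of Theorem~\ref{prop:mainreg}(b)) kills its finite coefficient. This yields $\P(\bA\bZ\in tC)/\P(Z^{(\bar\iota)}>t)\to \bE_{\bar\iota}^{(k)}[\mu_{\bar\iota}(\bA^{-1}(C))]$, which is precisely \eqref{eq:limiota}; note the case $\bar\iota=d$ with $\bE_d^{(k)}[\mu_d(\bA^{-1}(C))]=0$ is covered as well, the right-hand side then being $o(\P(Z^{(d)}>t))$. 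The only genuine obstacle is the exact cancellation of the indices below $\bar\iota$; once that is secured by the emptiness hypothesis, the remainder is a routine application of the convergence in Theorem~\ref{thm:breiman} and the ordering of the scaling functions.
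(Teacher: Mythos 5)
Your proposal is correct and follows essentially the same route as the paper: decompose over the partition $\{\Omega_i^{(k)}\}$, use the emptiness hypothesis to make the terms with $i<\bar\iota$ vanish identically (the paper's \eqref{eq:allizero}), and then apply Theorem~\ref{thm:breiman} together with the ordering of the scaling functions to the surviving indices $i\ge\bar\iota$. Your added remarks on why exact vanishing (rather than the $o(1)$-control of \eqref{eq:hidsum}) is indispensable, and on the linearity identity $\bA_\omega^{-1}(tC)=t\,\bA_\omega^{-1}(C)$, are sound elaborations of steps the paper leaves implicit.
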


\begin{proof}
Since by assumption, for all $i= i_k^*,\ldots,\bar{\iota}-1$ and $\omega\in \Omega_i^{(k)}$, we have $\bA_{\omega}^{-1}(C)=\emptyset$, we obtain
\begin{align}\label{eq:allizero}
\P_i^{(k)}\left(\bA\bZ\in t C\right) =0.
\end{align}
Therefore due to the definition of $i_k^*$, 
\begin{align*}
 \P(\bA\bZ\in tC)
                          & = \sum_{i=i^*_k}^{d}\P_i^{(k)}\left(\bA\bZ\in tC\right)\\ 
                          & = \sum_{i=\bar{\iota}}^{d}\P_i^{(k)}\left(\bA\bZ\in t C\right)\\ 
                          & = {\P(Z^{(\bar{\iota})}>t)} \,\bE_{\bar{\iota}}^{(k)}[\mu_{\bar{\iota}}(\bA^{-1}(C))] + o(\P(Z^{(\bar{\iota})}>t)),\quad t\to\infty,
                          \end{align*}
using Theorem \ref{thm:breiman}.
\end{proof}

The additional assumption made in Proposition~\ref{prop:mainregext} is often satisfied by random matrix structures.
One such example is a random matrix with only one positive entry in each row.  
Such matrices are for instance proposed in the examples of Section~\ref{ss42}. 
Moreover, if $\bA\in \R_+^{d\times d}$ 
and we follow the assumptions of  \cite[Theorem 2.3]{janssen:drees:2016}, we also obtain such matrices; cf. Remark~\ref{rem:jandrees}(iii). 
The following proposition formalizes the result in this case.

\begin{proposition}\label{prop:mainregextcor}
Let the assumptions and notations of  Theorem \ref{prop:mainreg} hold.
 Moreover, let $C\subset\E_q^{(k)}$ be such that $\displaystyle{C=\bigcup_{l=1}^N \Gamma_l}$ for some $N\in \mathbb{N}$, where each $\Gamma_l$ is of the form:
\begin{align*}
\Gamma&=\{\bx\in \R_+^q: x_{j_1}>\gamma_1,\ldots, x_{j_k}>\gamma_k\}.
\end{align*}
  and $\bE_i^{(k)}[\mu_i(\partial\bA^{-1}(C))]=0$ for all $i=1,\ldots,d$.
  Let $\bar{\iota}$ be defined as in \eqref{def:iota}. If the random matrix $\bA$ has a discrete distribution and has exactly one positive entry in each row, then  \eqref{eq:limiota} holds.
\end{proposition}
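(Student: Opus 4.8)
The plan is to obtain Proposition~\ref{prop:mainregextcor} as a direct specialization of Proposition~\ref{prop:mainregext}. All hypotheses and notation of Theorem~\ref{prop:mainreg} are already assumed, so the only thing left to verify is the extra assumption of Proposition~\ref{prop:mainregext}: that for every $i\in\{i_k^*,\dots,\bar{\iota}-1\}$ and every $\omega\in\Omega_i^{(k)}$ one has $\bA_\omega^{-1}(C)=\emptyset$. Once this is in place, \eqref{eq:limiota} follows verbatim.

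The first step is to unpack the structural hypothesis on a single realization. Fix $\omega$ and set $\ba:=\bA_\omega$. Since $\ba$ has exactly one positive entry per row, there are a selection map $\sigma:\{1,\dots,q\}\to\{1,\dots,d\}$ and positive scalars $a_1,\dots,a_q$ with $(\ba\bz)_r=a_r z_{\sigma(r)}$. Consequently, for a single generator $\Gamma=\{\bx\in\R_+^q:x_{j_1}>\gamma_1,\dots,x_{j_k}>\gamma_k\}$ the preimage is the coordinate rectangle
\[
\ba^{-1}(\Gamma)=\{\bz\in\R_+^d:\ z_s>c_s\ \text{for all } s\in S\},\qquad S:=\{\sigma(j_1),\dots,\sigma(j_k)\},
\]
with positive thresholds $c_s=\max\{\gamma_m/a_{j_m}:\sigma(j_m)=s\}$ and the remaining $d-|S|$ coordinates unconstrained. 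Hence $\ba^{-1}(C)=\bigcup_{l=1}^N \ba^{-1}(\Gamma_l)$ is a finite union of rectangles; each $\ba^{-1}(\Gamma_l)$ is bounded away from $\bCA_d^{(p_l-1)}$ with $p_l:=|S_l|$, and because $\mu_j$ charges only vectors having exactly $j$ positive coordinates, $\mu_j(\ba^{-1}(\Gamma_l))>0$ precisely when $j\ge p_l$.

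Next I would pin down the partition index. With $m_s:=\#\{r:\sigma(r)=s\}$ counting the rows reading coordinate $s$, the selection structure (via Lemma~\ref{Lemma 3.4}) identifies $i_k(\ba)=\min\{|J|:\sum_{s\in J}m_s\ge k\}$, the least number of coordinates whose activation already produces $k$ large entries of $\ba\bz$. Since the $k$ rows constraining each $\Gamma_l$ map into $S_l$, we get $\sum_{s\in S_l}m_s\ge k$ and hence $p_l\ge i_k(\ba)$ for all $l$, recovering $\ba^{-1}(C)\subseteq\E_d^{(i_k(\ba))}$. Discreteness of $\bA$ then turns these pointwise statements into the quantities defining $\bar{\iota}$: summing over the countably many realizations and using independence of $\bA$ and $\bZ$,
\[
\bE_i^{(k)}[\mu_i(\bA^{-1}(C))]=\sum_{\ba:\,i_k(\ba)=i}\P(\bA=\ba)\,\mu_i(\ba^{-1}(C)),
\]
which is strictly positive exactly when some realization in the cell $\Omega_i^{(k)}$ satisfies $\min_l p_l=i$.

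The step I expect to be the main obstacle is the upgrade from ``$\mu_i$-null'' to ``empty'' on the sub-critical cells: showing $\bA_\omega^{-1}(C)=\emptyset$, and not merely $\mu_i(\bA_\omega^{-1}(C))=0$, whenever $i=i_k(\bA_\omega)<\bar{\iota}$. This is delicate because each rectangle $\ba^{-1}(\Gamma_l)$ is non-empty in $\R_+^d$, so the conclusion can only come by excluding the offending realizations outright, namely those $\ba$ with $\ba^{-1}(C)\neq\emptyset$ for which every rectangle needs strictly more coordinates than $i_k(\ba)$, that is $\min_l p_l>i_k(\ba)$. Ruling these out is exactly where the one-positive-entry-per-row hypothesis together with discreteness must be used decisively: I would argue that for such matrices the cheapest coordinate set producing $k$ large outputs necessarily coincides with the support $S_l$ of some generator, forcing $i_k(\ba)=\min_l p_l$ on every realization with $\ba^{-1}(C)\neq\emptyset$. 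Granting this matching, the cells $\Omega_i^{(k)}$ with $i<\bar{\iota}$ carry only realizations with empty preimage, \eqref{eq:allizero} applies, and Proposition~\ref{prop:mainregext} delivers \eqref{eq:limiota}.
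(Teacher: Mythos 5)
Your overall strategy coincides with the paper's: verify the extra hypothesis of Proposition~\ref{prop:mainregext}, namely that $\bA_\omega^{-1}(C)=\emptyset$ for every $\omega\in\Omega_i^{(k)}$ with $i_k^*\le i<\bar{\iota}$, and then invoke that proposition. Your preparatory observations are correct: for a one-entry-per-row matrix the preimage of each $\Gamma_l$ is a nonempty coordinate rectangle supported on $S_l$, the identity $i_k(\ba)=\min\{|J|:\sum_{s\in J}m_s\ge k\}$ holds, and $p_l=|S_l|\ge i_k(\ba)$. The genuine gap is that the step you yourself flag as the main obstacle is never carried out: the argument ends with ``I would argue that \dots forcing $i_k(\ba)=\min_l p_l$'' and then ``Granting this matching\dots''. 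That matching claim \emph{is} the proposition; without it nothing has been proved. Worse, the claim is false in general when $k<q$ and several rows read the same coordinate. Take $q=3$, $d=2$, $k=2$, let $\ba$ be the selection matrix with $\sigma=(1,1,2)$ and unit weights, and let $C=\{\bx\in\R_+^3:\ x_1>1,\ x_3>1\}$. Then $S=\{1,2\}$, so $\min_l p_l=2$ and $\ba^{-1}(C)=\{\bz:z_1>1,\ z_2>1\}\ne\emptyset$; but $(\ba\bz)^{(2)}=z_1$ gives $\tau_{3,2}^{(2,2)}(\ba)=\infty$ and hence $i_2(\ba)=1<2$. So the ``offending realizations'' you propose to exclude do exist, their preimages are nonempty rectangles carrying no $\mu_{i_k(\ba)}$-mass, and the route through the emptiness hypothesis of Proposition~\ref{prop:mainregext} cannot close for them.

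For comparison, the paper argues differently at this point: it first uses discreteness of $\bA$, the upward-closedness of $C$, and assumption (ii) of Theorem~\ref{prop:mainreg} to show by contradiction that $\bA_\omega^{-1}(C)\cap(\CA_d^{(i)}\setminus\CA_d^{(i-1)})=\emptyset$, and then deduces $\bA_\omega^{-1}(C)=\emptyset$ by projecting any putative preimage point onto the nonzero columns of $\bA_\omega$, invoking the assertion that on $\Omega_i^{(k)}$ exactly $i$ columns of $\bA_\omega$ are nonzero. You should be aware that this last assertion is precisely the paper's analogue of your unproved matching claim: it is transparent when $k=q$ (cf.\ the identification of $\Omega_i^{(q)}$ in the proof of Proposition~\ref{prop:onerisk1}) but fails for the example above, where both columns of $\ba$ are nonzero while $i_2(\ba)=1$. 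So your instinct about where the difficulty sits was exactly right, but the proposal neither fills the gap nor can be completed along the proposed lines without restricting to $k=q$ or otherwise excluding matrices whose rows share columns.
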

\begin{proof} If we show that for all $i=  i^*_k,\ldots,\bar{\iota}-1$ and $\omega\in \Omega_i^{(k)}$, we have $\bA_{\omega}^{-1}(C)=\emptyset$, then applying Proposition \ref{prop:mainregext} we get the result.
 Fix $i\in\{i_k^*,\ldots,\bar{\iota}-1\}$.  By definition, we have $ \bE_i^{(k)}[\mu_i(\bA^{-1}(C))]=0$.
 Suppose there exists $\omega\in \Omega_i^{(k)}$ with $$\bA_{\omega}^{-1}(C)\cap (\CA_d^{(i)}\setminus\CA_d^{(i-1)})\neq\emptyset.$$ Then there exists $\bx^*\in C\subset \E_q^{(k)}$ and $\bz^*\in \CA_d^{(i)}\setminus\CA_d^{(i-1)} \subset \E_d^{(i)}$ with $\bA_{\omega}\bz^*=\bx^*\in C$. \\
Since $\bz^*\in \CA_{d}^{(i)}\setminus\CA_d^{(i-1)}$, exactly $i$ components of $\bz^*$ are positive. Without loss of generality let $z_1^*,\ldots,z_i^*>0.$
Now for any $\bz=(z_1,\ldots,z_i,0,\ldots,0)$ with $z_j\ge z_j^*$, we have $\bA\bz\ge\bA\bz^*=\bx^*$ and by the structure of $C$, we have $\bA\bz\in C.$ Hence
\begin{eqnarray*}
    \{\bz\in \R_+^d: z_j \ge z_j^*,\, j=1,\ldots,i \text{ and } z_{i+1}=\ldots=z_d=0\}
      \subseteq \bA_{\omega}^{-1}(C)\cap  (\CA_{d}^{(i)}\setminus\CA_d^{(i-1)}).
\end{eqnarray*}
Now from assumption (ii) of Theorem \ref{prop:mainreg} and the homogeneity of the measure $\mu_i$, we have
\begin{eqnarray*}
0&<&\mu_i\left( \{\bz\in \R_+^d: z_j \ge z_j^*,\, j=1,\ldots,i \text{ and } z_{i+1}=\ldots=z_d=0\} \right)\\
    &\le& \mu_i(\bA_{\omega}^{-1}(C)) \le \frac{\bE_i^{(k)}[\mu_i(\bA^{-1}(C))]}{\P(\bA=\bA_{\omega})},
\end{eqnarray*}
since $\bA$ has a discrete distribution and $\P(\bA=\bA_{\omega})>0$.
Hence $\bE_i^{(k)}[\mu_i(\bA^{-1}(C))]>0$, which is a contradiction. Thus
  \begin{align}\label{eq:cacaempty}
  \bA_{\omega}^{-1}(C)\cap (\CA_d^{(i)}\setminus\CA_d^{(i-1)})=\emptyset.
  \end{align}
 Now suppose that $\bA_{\omega}^{-1}(C)\neq\emptyset$. Then there exist $\bx \in C\subset \E_q^{(k)}$ and $\bz \in\E_d^{(i)}$ with $\bA_{\omega}\bz=\bx\in C$. Since $\omega\in \Omega_i^{(k)}$, exactly $i$ columns of $\bA_{\omega}$ have at least one positive entry. 
  W.l.o.g. assume these are the first $i$ columns of $\bA_{\omega}$. 
  Then {for}
 \[{\bz}^*:= (z_1,\ldots,z_i,0,\ldots,0) \in \CA_d^{(i)}\setminus\CA_d^{(i-1)}\] 
we have
\[\bA_{\omega}{\bz}^*=\bA_{\omega}\bz=\bx\in C\]
 since columns $i+1,\ldots,d$ of $\bA_{\omega}$ have all entries zero and hence the last $d-i$ entries of $\bz$ or ${\bz}^*$ do not count towards the computation of $\bx$. 
 Hence ${\bz}^*\in \bA_{\omega}^{-1}(C)\cap (\CA_d^{(i)}\setminus\CA_d^{(i-1)})$, which is a contradiction to \eqref{eq:cacaempty}. This gives the statement.
\end{proof}

\section{Bipartite networks}\label{sec:applications}

Risk-sharing in complex systems is often modeled using a graphical network model, one such example being the bipartite network structure for modeling losses in insurance markets or financial investment risk as proposed in  \citet{kley:kluppelberg:reinert:2016,kley:kluppelberg:reinert:2017}.
In these papers, only first order asymptotics of risk measures based on the agents' and market's tail risks are derived.
In the same spirit, but going beyond first order approximations, we consider a vertex set of agents $\mathcal{A}=\{1,\ldots,q\}$  and a vertex set of objects (insurance claims or investment risks) $\mathcal{O}=\{1,\ldots, d\}$.

\begin{figure}[h]
\[\begin{tikzpicture}
    \node at (-1.5,1.5) {\sc{Agents:}};
    \node at (-1.5,-1.3) {\sc{Objects:}};
	\vertex[fill] (A1) at (1,1) [label=above:$A_1$] {};
	\vertex[fill] (A2) at (3,1) [label=above:$A_2$] {};
	\vertex[fill] (A3) at (5,1) [label=above:$A_3$] {};
	\vertex (O1) at (0,-1) [label=below:$O_1$] {};
	\vertex (O2) at (2,-1) [label=below:$O_2$] {};
	\vertex (O3) at (4,-1) [label=below:$O_3$] {};
	\vertex (O4) at (6,-1) [label=below:$O_4$] {};
	\path
		(A1) edge (O1)
		(A1) edge (O4)
		(A2) edge (O1)
		(A2) edge (O3)
		(A3) edge (O2)
		(A3) edge (O3)
		(A3) edge (O4)
	;
\end{tikzpicture}\]
\caption{A bipartite network with $q=3$ agents and $d=4$ objects.}\label{fig:bipnet}
\end{figure}
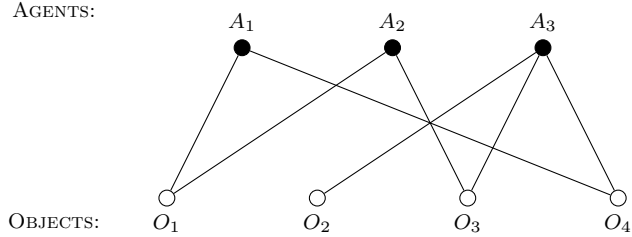

Each agent $k \in \mathcal{A} $ chooses a number of objects $i \in \mathcal{O}$
to connect with. Figure \ref{fig:bipnet} provides an example of such a network. This choice can be random according to some probability distribution.
 A basic model assumes $k$ and $i$ connect with probability
 $$\P(k\sim i) = p_{k i} \in [0,1], \quad k=1,\ldots,q,\, i=1,\ldots,d.$$
Let $Z_{i}$ denote the risk
attributed to the $i$-th object and $\bZ=(Z_{1},\ldots, Z_{d})^{\top}$ forms the risk vector. Assume that the graph creation process is independent of $\bZ$.
The proportion of loss of object $i$ affecting agent $k$ is denoted by
\begin{align*} 
 f_{k}(Z_{i}) = \bone{(k\sim i)} W_{k i} Z_{i},
\end{align*}
where $W_{k i}>0$ denotes the effect of the $i$-th object on the $k$-th agent.
Now define the $q\times d$ {\em weighted adjacency matrix} $\bA=(A_{ki})_{k=1,\ldots,q,i=1,\ldots,d}\in \R_+^{q\times d}$
 by
\begin{align} \label{Aarbitrary}
    A_{k i} &= \bone{(k \sim i)}W_{k i},.
\end{align}

The total exposure of the agents given by $\bX=(X_{1}, \ldots, X_{q})^{\top}$, where $X_{k } = \sum_{i=1}^{d} f_{k }(Z_{i})$ can be represented as
\begin{align*}
    \bX=\bA\bZ.
\end{align*}
Our goal is to find the probability of tail risks of some or all agents in terms of $\bX$.

In \cite{kley:kluppelberg:reinert:2016,kley:kluppelberg:reinert:2017},  proportional weights are used to distribute the insurance loss of object $i$ affecting agent $k$ or to diversify the investment risk of an agent. 
The weights complicate calculations and only affect the values of the limit measures, resulting in different constants, whereas
the rate of convergence remains the same.
As they rather blur the mathematical insight, we work in Section~\ref{ss42}  with unweighted adjacency matrices. 
However, they can be incorporated in the calculations by appropriate multiplications using the independence of $\bA$ and $\bZ$. 
We consider a weighted adjacency matrix in Section~\ref{ss43}, when we investigate dependent objects in contrast to independent ones; see Examples~\ref{ex:det} and \ref{ex:nonind} below.
Throughout we formulate our examples and results in terms of investment risk.

\subsection{Independent objects}\label{ss42}

For illustration we start with an example, which shows how regular variation of $\bX=\bA\bZ$ for independent Pareto-tailed components of $\bZ$ and random adjacency matrices transforms into regular variation of $\bX$. The choice of  $C$ specifies the tail risk, and in this example we calculate the asymptotic tail risk explicitly for two different kinds of sets leading to two different asymptotic rates.

\begin{example}\label{ex:taylornew}
Suppose there are two  products  $P_1$ and  $P_2$ in the market with associated risks $Z_1$ and $Z_2$ respectively, which are independent and  $\P(Z_{i}>z) \sim \kappa_{i} z^{-\al}$ for $\alpha>0$ as $z\to \infty$ with constants $\kappa_{i}>0$ for $i=1,2$. Assume there are three investors, each of whom may either invest in one unit of $P_1$ or one unit of $P_2$ or one unit of both (we assume that they always invest). Hence there are $3\times 3\times3=27$ possible market investments which may be represented by the matrix $\bA$ so that the joint risk 
of the investors is given by $\bX=\bA\bZ.$ Now the 27 possibilities for the matrix $\bA$ are given by
\begin{equation*} \small
   \bA_1= \begin{bmatrix}
    1 & 1\\
    1 & 1\\
    1 & 1
    \end{bmatrix}
    , \; \bA_2=
    \begin{bmatrix}
     1 & 0\\
    1 & 0\\
    1 & 0
     \end{bmatrix}
     , \; \bA_3 =
     \begin{bmatrix}
     0 & 1\\
     0 & 1\\
     0 & 1
     \end{bmatrix}, \quad\quad\quad\quad\quad\quad\quad\quad\quad\quad\quad\quad\quad\quad\quad\quad\quad\quad\quad\quad\quad\quad\;
  \end{equation*}
  \begin{equation*} \small
 \bA_4 =
        \begin{bmatrix}
     1 & 1\\
    1& 0\\
    1 & 0
    \end{bmatrix},\;
    \bA_5 =
        \begin{bmatrix}
     1 & 0\\
    1 & 1\\
    1 & 0
     \end{bmatrix},
     \; \bA_6 =
     \begin{bmatrix}
     1 & 0\\
    1 & 0\\
    1 & 1
    \end{bmatrix},
         \; \bA_7 =
        \begin{bmatrix}
     1 & 1\\
    0& 1\\
    0 & 1
    \end{bmatrix}, \;
    \bA_8 =
        \begin{bmatrix}
     0 & 1\\
    1 & 1\\
    0 & 1
     \end{bmatrix}, \; \bA_9 =
     \begin{bmatrix}
     0 & 1\\
    0 & 1\\
    1 & 1
    \end{bmatrix}, \quad\;\;\;
\end{equation*}
  \begin{equation*} \small
 \bA_{10} =
        \begin{bmatrix}
     1 & 1\\
    1& 1\\
    1 & 0
    \end{bmatrix},\;
    \bA_{11} =
        \begin{bmatrix}
     1 & 0\\
    1 & 1\\
    1 & 1
     \end{bmatrix},
     \; \bA_{12} =
     \begin{bmatrix}
     1 & 1\\
    1 & 0\\
    1 & 1
    \end{bmatrix},
         \; \bA_{13} =
        \begin{bmatrix}
     1 & 1\\
    1& 1\\
    0 & 1
    \end{bmatrix}, \;
    \bA_{14} =
        \begin{bmatrix}
     0 & 1\\
    1 & 1\\
    1 & 1
     \end{bmatrix}, \;
     \bA_{15} =
     \begin{bmatrix}
     1 & 1\\
    0 & 1\\
    1 & 1
    \end{bmatrix},
\end{equation*}

  \begin{equation*} \small
 \bA_{16} =
        \begin{bmatrix}
     1 & 1\\
    1 & 0\\
     0 & 1
    \end{bmatrix},\;
    \bA_{17} =
        \begin{bmatrix}
     1 & 0\\
    1 & 1\\
    0 & 1
     \end{bmatrix},
     \; \bA_{18} =
     \begin{bmatrix}
     1 & 0\\
    0 & 1\\
    1 & 1
    \end{bmatrix},
         \; \bA_{19} =
        \begin{bmatrix}
     1 & 1\\
    0& 1\\
    1 & 0
    \end{bmatrix}, \;
    \bA_{20} =
        \begin{bmatrix}
     0 & 1\\
    1 & 1\\
    1 & 0
     \end{bmatrix}, \; \bA_{21} =
     \begin{bmatrix}
     0 & 1\\
    1& 0\\
    1 & 1
    \end{bmatrix},
\end{equation*}

  \begin{equation*} \small
 \bA_{22} =
        \begin{bmatrix}
     1 & 0\\
    1& 0\\
    0 & 1
    \end{bmatrix},\;
    \bA_{23} =
        \begin{bmatrix}
     0 & 1\\
    1 & 0\\
    1 & 0
     \end{bmatrix},
     \; \bA_{24} =
     \begin{bmatrix}
     1 & 0\\
    0 & 1\\
    1 & 0
    \end{bmatrix},
         \; \bA_{25} =
        \begin{bmatrix}
     0 & 1\\
    0& 1\\
    1 & 0
    \end{bmatrix}, \;
    \bA_{26} =
        \begin{bmatrix}
     1 & 0\\
    0 & 1\\
    0 & 1
     \end{bmatrix}, \; \bA_{27} =
     \begin{bmatrix}
     0 & 1\\
    1 & 0\\
    0 & 1
    \end{bmatrix},
\end{equation*}
and let $q_m:=\P(\bA=\bA_m)\ge 0$ for $m=1,\ldots,27$ such that $\sum_{k=1}^{27}q_m=1$,  $\sum_{m=1}^{15}q_m>0$ and
$q_{16}+q_{19}>0$.\\
Suppose  we want to assess the risk of all investors being above a high threshold $t>0$. Moreover, we also want to find the probability that when all 
risks of the investors are above $t$, the risk of the first investor is larger than that of the second which is larger than the one of the third, i.e., $X_1>X_2>X_3>t$.
Hence given $t>0$ and
\begin{align}
    C_1&=\{\bx\in\E_3^{(3)}: x_i>1, i=1,2,3\} = (\bx,\binfty), \label{set:C1}\\
   C_2&=  \{\bx\in\E_3^{(3)}: x_1>x_2>x_3>1\},  \label{set:C2}
\end{align}
 we want to compute $\P(\bX\in t C_i)$ for $i=1,2.$ First note that $\bZ\in\MRV(\alpha,b_1(t),\mu_1,\E_2^{(1)})$ and $\bZ\in\MRV(2\alpha,b_2(t),\mu_2,\E_2^{(2)})$ where $b_1(t)=((\kappa_1+\kappa_2)t)^{1/\alpha}, b_2(t)=(\kappa_1\kappa_2t)^{1/(2\alpha)}$ and for $(z_1,z_2)\in(0,\infty)^2,$
\begin{align*}
   &\mu_1(\{\bv\in\E_2^{(1)}:v_1>z_1 \text{ or } v_2>z_2\})=\frac{\kappa_1}{\kappa_1+\kappa_2}z_1^{-\alpha}+\frac{\kappa_2}{\kappa_1+\kappa_2}z_2^{-\alpha},\\
  &\mu_2(\{\bv\in\E_2^{(2)}:v_1>z_1,v_2>z_2\})=(z_1z_2)^{-\alpha}.
\end{align*}
In order to compute the necessary probabilities, we first need to compute $i_k^{*}$ as defined in Theorem~\ref{prop:mainreg} based on $\tau_{3,2}^{(k,i)}(\bA_m)$ for $k=1,2,3$ and $m=1,\ldots, 27$. We can check that for $m=1,\ldots, 15$,
\begin{align*}
&\tau_{3,2}^{(k,1)}(\bA_{m}) < \infty, \; k=1,2,3,\\
&\tau_{3,2}^{(k,2)}(\bA_{m}) =\infty, \; k=1,2,3.
\end{align*}
Hence for $m=1,\ldots, 15$, we get $i_k(\bA_m)=1$ for $k=1,2,3.$ On the other hand for $m=16,\ldots, 27$, we observe that
\begin{align*}
&\tau_{3,2}^{(k,1)}(\bA_{m}) < \infty, \; k=1,2,3,\\
&\tau_{3,2}^{(3,2)}(\bA_{m}) < \infty, \quad \tau_{3,2}^{(2,2)}(\bA_{m}) =\tau_{3,2}^{(1,2)}(\bA_{m}) = \infty.
\end{align*}
Therefore for $m=16,\ldots, 27$, we get $i_3(\bA_m)=2, i_2(\bA_m)=1, $ and  $i_1(\bA_m)=1.$\\
Clearly from our assumptions, \begin{align*}
\P(\Omega_1^{(3)})=\sum_{m=1}^{15} q_m=:q_1+Q_1+Q_2>0, \quad  \text{and,} \quad  \P(\Omega_2^{(3)})=\sum_{m=16}^{27} q_m >0,
\end{align*}
where
\begin{align*}
Q_1&=q_2+q_4+q_5+q_6+q_{10}+q_{11}+q_{12}, &&Q_2= q_3+q_7+q_8+q_9+q_{13}+q_{14}+q_{15}.
\end{align*}
Note that both $C_1, C_2 \subset \E_3^{(3)}$. Applying Theorem~\ref{prop:mainreg}(b) results in $i_3^*=1$ and we have
\begin{align*}
\bX \in\MRV(\alpha,b_1(t)=((\kappa_1+\kappa_2)t)^{1/\alpha},\overline{\mu}_3,\E_3^{(3)}),\; \;\quad{where }\;\; \overline{\mu}_3(\cdot) = \bE_1^{(3)}[\mu_1(\bA^{-1}(\cdot))].
\end{align*}
First consider the set $C_1$ as defined in \eqref{set:C1}. Since
\begin{align}
\bE_1^{(3)}[\mu_1(\bA^{-1}(C_1))]&= \sum_{m=1}^{15} q_{m}\mu_1(\{\bz\in\E_2^{(1)} : \bA_m\bz\in C_1\}) \nonumber\\
&=\frac{[(q_1+Q_1)\kappa_1+(q_1+Q_2)\kappa_2]}{\kappa_1+\kappa_2}>0,\label{eq:C1pos}
\end{align}
we have
\beao
\P(\bX \in tC_1) &\sim&  \P(\bZ^{(1)}>t) \bE_1^{(3)}[\mu_1(\bA^{-1}(C_1))]\\
 &\sim&  \left[(q_1+Q_1)\kappa_1+(q_1+Q_2)\kappa_2\right]t^{-\alpha},  \quad t\to\infty.
\eeao
The same result can be obtained from \Cref{prop:mainregext}, since \eqref{eq:C1pos} indicates $\bar{\iota}=\bar{\iota}_{C_1}=1.$
%
Now consider the set $C_2$ as in \eqref{set:C2}. Note that in this case,
\begin{eqnarray*}
    \bE_1^{(3)}[\mu_1(\bA^{-1}(C_2))] &=& \sum_{m=1}^{15}q_m\mu_1(\bA_m^{-1}(C_2))=0, \quad \text{ and,}\\
\bE_2^{(3)}[\mu_2(\bA^{-1}(C_2))]
 &=&q_{16}\mu_2(\{\bz\in\E_2^{(2)} : \bA_{16}\bz\in C_2\})+q_{19}\mu_2(\{\bz\in\E_2^{(2)} : \bA_{19}\bz\in C_2\})\\
& =&\frac12 (q_{16}+q_{19})>0.
\end{eqnarray*}
Hence using notation from  \Cref{prop:mainregext}, we have $\bar{\iota}=\bar{\iota}_{C_2}=2$.  Furthermore, for $m=1,\ldots, 15$ we have $\bA_m^{-1}(C_2)=\emptyset$.
Therefore the assumptions of \Cref{prop:mainregext} are satisfied and we obtain
\begin{align*}
 \P(\bX \in tC_2)\sim \P(Z^{(2)}>t)  \bE_2^{(3)}[\mu_2(\bA^{-1}(C_2))] \sim\frac12 \kappa_1\kappa_2(q_{16}+q_{19})t^{-2\alpha}.
\end{align*}

\halmos
\end{example}
Examples for different choices of risk sets $C$ are aplenty considering the numerous risk situations for a group of agents.
In what follows, we address tail risks for extreme events where the portfolio risk of all agents are above a high threshold in a more systematic way. Such events are represented by sets of the form $t(\bx,\infty)$.  In case we want to study the problem for a specific set of agents, we need only to consider a reduced set of rows of the adjacency matrix $\bA$.

We suppose that each agent is able to take investment decisions according to a probability distribution, where the agents' choices are independent of each other.
Hence, we may assume for each agent $k\in\mathcal{A}=\{1,\ldots,q\}$ that there exist subsets $J_{k1},\ldots,J_{km_k}$ of  investments $\mathcal{O}=\{1,\ldots, d\}$ such that
\[\P(A_{k i}= 1 \text{ for }i\in J_{kl}, \text{and } A_{k i}=0 \text{ for } i\in J_{kl}^{c}) = p_{kl}\]
for $p_{kl}\in[0,1]$ and $\sum_{l=1}^{m_k}p_{kl}=1$. 
We may also consider $A_{ki}=W_{ki}>0$ for $i\in J_{kl}$ as in \eqref{Aarbitrary}.

Our examples also show exactly how our results extend those of Theorem~\ref{lem:jd2017thm23} of Janssen and Drees \cite{janssen:drees:2016} in multiple directions. Firstly, we allow for a non-square matrix $\bA$ with $q\ge d$, whereas  the result in \cite{janssen:drees:2016} was restricted to $q=d$. For computational ease we restrict to the case where the components of $\bZ$ are independent.
Although this results in $\bZ\in\MRV(d\al ,\mu_d,\E^{(d)}_{d})$ as required in \cite{janssen:drees:2016},
 we obtain $\bA\bZ$ to be multivariate regularly varying with different indices in different spaces; whereas in the aforementioned paper, the indices of regular variation of $\bZ$ and $\bA\bZ$ on $\E^{(d)}_{d}$ remain identical.

Our first result provides tail probabilities for the agents' risk exposures for a model, where each agent invests in exactly one investment possibility and the investment possibilities are independent of each other. {Moreover, agents take their investment decisions independently. }

To invest into one investment possibility is a risk averse investment strategy for small $\alpha$. According to Remark 13.3(b) of R\"uschendorf \cite{Ruschendorf}, for $\alpha\le 1$ portfolio diversification does not reduce the danger of extreme losses, but typically increases extreme risks.

\begin{proposition}\label{prop:onerisk1}
Let $Z_{1},\ldots, Z_{d}$ be independent random variables such that \linebreak
$\P(Z_{i}>t)\sim \kappa_{i}t^{-\alpha}$ for $\alpha>0$ as $t\to\infty$ with constants $\kappa_{i}>0$ for $i\in\mathcal{O}=\{1,\ldots, d\}$.
Let $\bA\in \{0,1\}^{q\times d}$ for $q\geq d$ be a random adjacency matrix, where for all $k \in \mathcal{A}=\{1,\ldots,q\}$ independently,

\begin{eqnarray*}
\begin{array}{rccll}
\P(\,A_{k i}=1 \text{ and } A_{k j}=0 \text{ for } j\neq i ) &=& \frac{1}{d-1}, &\quad i\in\{1,\ldots,d\}\backslash\{k\}, & k\in\{1,\ldots,d\},\\
\P(\,A_{k i}=1 \text{ and } A_{k j}=0 \text{ for } j\neq i ) &=& \frac{1}{d}, &\quad i\in\{1,\ldots,d\},& k\in\{d+1,\ldots,q\}.
\end{array}
\end{eqnarray*}
\begin{enumerate}[(a)]
\item
For $1\leq k\leq q-1$
we have $\bA\bZ\in\MRV(\alpha,b_1(t),\overline{\mu}_k,\E^{(k)}_{q})$ with
$$\overline{\mu}_k(\cdot)=\bE_1^{(k)}\left[ \mu_1(\{\bz\in \E^{(1)}_{d}:\bA\bz \in \,\cdot\,\})\right],$$
where $\mu_1(\left[0,\bz\right]^c)=K_1^{-1}\sum_{i=1}^d\kappa_i z_i^{-\alpha}$ for $\bz\in \E^{(1)}_{d}, b_1(t)\sim  (K_1t)^{1/\alpha}$ and $K_1=\sum_{i=1}^d \kappa_i$.
    \item
  We have $\P(\Omega_1^{(q)})=0$  and for $2\leq i\leq d$ and $\bx=(x_{1},\dots,x_{q})^\top\in\E_q^{(q)}$ we have as $t\to\infty$,
\begin{eqnarray*}
 \P_{i}^{(q)}(\bA\bZ \in t(\bx,\binfty) )   &=&  \Delta_i \sum\limits_{{1\le j_{1}<\cdots<j_{i}\le d}}  \left\{\prod\limits_{l=1}^{i } \kappa_{j_{l}}x_{j_l}\right\} t^{-i\alpha} + o(t^{-i \alpha}).
\end{eqnarray*}
where
\beao
  \Delta_i=\left(\frac{i-1}{d-1}\right)^{i}\left(\frac{i}{d-1}\right)^{d-i}
        \left(\frac{i}{d}\right)^{q-d}-i\left(\frac{i-2}{d-1}\right)^{i-1}
        \left(\frac{i-1}{d-1}\right)^{d+1-i}\left(\frac{i-1}{d}\right)^{q-d}.
\eeao
\item
For $k=q$ we have  $\bA\bZ\in\MRV(2\alpha,b_2(t),\overline{\mu}_q,\E^{(q)}_{q})$ with $b_2(t)\sim  (K_2t)^{1/2\alpha}$ where \linebreak $K_2=\sum_{1\le i<j\le d}^d \kappa_i\kappa_j$ and
\beao
    \overline{\mu}_q(\left(\bx,\binfty\right))=\frac{2^{q-2}}{(d-1)^dd^{q-d}}K_2^{-1}\sum_{1\leq i<j\leq d} \kappa_i\kappa_{j}(x_ix_j)^{-\alpha},
\eeao
such that for $\bx\in\E_q^{(q)}$ as $t\to\infty$,
\begin{align*}
\P(\bA\bZ \in t(\bx,\binfty) )   = & K_2\overline{\mu}_q(\left(\bx,\binfty\right)) t^{-2\alpha} + o(t^{-2\alpha}).
\end{align*}
\end{enumerate}
\end{proposition}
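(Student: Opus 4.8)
The plan is to reduce everything to Theorems~\ref{thm:breiman} and \ref{prop:mainreg} by first pinning down the hidden regular variation of $\bZ$ and then identifying the partition sets $\Omega_i^{(q)}$ (and $\Omega_i^{(k)}$) explicitly. Since $Z_1,\dots,Z_d$ are independent with $\P(Z_j>t)\sim\kappa_j t^{-\alpha}$, the computation of Example~\ref{ex:indpar} carries over verbatim: $\bZ\in\MRV(i\alpha,b_i,\mu_i,\E_d^{(i)})$ for every $i=1,\dots,d$, with $\P(Z^{(i)}>t)\sim K_i t^{-i\alpha}$ where $K_i=\sum_{1\le j_1<\cdots<j_i\le d}\prod_{l}\kappa_{j_l}$, canonical $b_i(t)\sim(K_it)^{1/(i\alpha)}$ (so that $b_i/b_{i+1}\to\infty$ as required), and
\[
\mu_i(\{\bz:z_{j_1}>y_1,\dots,z_{j_i}>y_i\})=K_i^{-1}\prod_{l=1}^i\kappa_{j_l}\,y_l^{-\alpha}
\]
for each $i$-subset $\{j_1,\dots,j_i\}$. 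In particular assumption~(ii) of Theorem~\ref{prop:mainreg} holds since all $\kappa_j>0$, and this also supplies the explicit $\mu_1,b_1,K_1$ in part~(a).

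Next I would make the partition $\{\Omega_i^{(k)}\}$ completely explicit. Each row of $\bA$ carries exactly one $1$, so $\bA$ is encoded by the map $k\mapsto s(k)$ sending agent $k$ to its chosen object, and $\bA\bz$ lists the entries $z_{s(k)}$. Writing $c_s=\#\{k:s(k)=s\}$ and $c_{(1)}\ge c_{(2)}\ge\cdots$ for the ordered counts, the pre-image $\bA^{-1}(\E_q^{(k)})$ is $\{\bz:\sum_{s:z_s>0}c_s\ge k\}$, and combining this with Lemma~\ref{Lemma 3.4} gives $i_k(\bA)=\min\{m:\sum_{l\le m}c_{(l)}\ge k\}$. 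For $k=q$, since $\sum_s c_s=q$ this forces $m=n(\bA)$, the number of distinct objects actually chosen, so $\Omega_i^{(q)}=\{n(\bA)=i\}$. Because agent $j$ never picks object $j$ for $j\le d$, no single object can be chosen by all agents, whence $n(\bA)\ge 2$ almost surely and $\P(\Omega_1^{(q)})=0$; the configuration on $\{1,2\}$ with agent $1\to2$, agent $2\to1$ shows $\P(\Omega_2^{(q)})>0$, so $i_q^*=2$. For $1\le k\le q-1$, letting one object be chosen by $q-1$ agents gives $c_{(1)}\ge q-1\ge k$ with positive probability, hence $\P(\Omega_1^{(k)})>0$ and $i_k^*=1$.

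With these facts, part~(a) is immediate from Theorem~\ref{prop:mainreg}(b): $i_k^*=1$ gives $\bA\bZ\in\MRV(\alpha,b_1,\overline\mu_k,\E_q^{(k)})$ with $\overline\mu_k=\bE_1^{(k)}[\mu_1(\bA^{-1}(\cdot))]$, the moment condition~(i) being trivial since $\bA$ has finite support and $\tau_{q,d}^{(k,1)}(\bA)<\infty$ on $\Omega_1^{(k)}$ by Lemma~\ref{lem:tauqi}(e). For part~(c), $i_q^*=2$ yields $\bA\bZ\in\MRV(2\alpha,b_2,\overline\mu_q,\E_q^{(q)})$ with $\overline\mu_q=\bE_2^{(q)}[\mu_2(\bA^{-1}(\cdot))]$; and part~(b) follows by applying Theorem~\ref{thm:breiman} on each $\Omega_i^{(q)}$, $2\le i\le d$, giving $\P_i^{(q)}(\bA\bZ\in t(\bx,\binfty))\sim\P(Z^{(i)}>t)\,\bE_i^{(q)}[\mu_i(\bA^{-1}((\bx,\binfty)))]$ and, since $\P(Z^{(i)}>t)\sim K_it^{-i\alpha}$, the factor $K_i$ cancels against $K_i^{-1}$ in $\mu_i$.

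It remains to evaluate these expectations, which is the combinatorial heart of the statement and where I expect the main difficulty. For $\bA\in\Omega_i^{(q)}$ with chosen-object set $S$ one has $\bA^{-1}((\bx,\binfty))=\{\bz:z_s>\max_{k:s(k)=s}x_k,\ s\in S\}$, so $\mu_i(\bA^{-1}((\bx,\binfty)))=K_i^{-1}\prod_{s\in S}\kappa_s(\max_{k:s(k)=s}x_k)^{-\alpha}$; the delicate point is that the relevant threshold per object is the \emph{maximum} of the thresholds of the agents routed to it. The expectation then splits as a sum over all agent-to-object assignments using exactly $i$ objects, where the assignment probability factorizes over agents through the weights $1/(d-1)$ (for $k\le d$, excluding object $k$) and $1/d$ (for $k>d$). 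Grouping by the used set $S$ and performing inclusion–exclusion in the number of used objects — the ``at most $i$ objects used'' probability minus the $i$ single-object-drop terms — should produce the constant $\Delta_i$ of part~(b); for part~(c) the two-object configuration $\{i',j'\}$ forces $i'\to j'$ and $j'\to i'$, so ``at most two'' already equals ``exactly two'' and the single clean weight $2^{q-2}/((d-1)^dd^{q-d})$ emerges with no correction. Carefully accounting for which agent supplies the binding (maximal) threshold for each object, so as to collapse the product to the stated $\prod_l\kappa_{j_l}x_{j_l}^{-\alpha}$ respectively $\kappa_{i'}\kappa_{j'}(x_{i'}x_{j'})^{-\alpha}$, is the subtle bookkeeping step; the remaining algebraic simplifications are routine.
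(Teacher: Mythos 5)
Your proposal is correct and follows essentially the same route as the paper: establish $\bZ\in\MRV(i\alpha,b_i,\mu_i,\E_d^{(i)})$ as in Example~\ref{ex:indpar}, identify $\Omega_i^{(q)}$ as the event that exactly $i$ columns of $\bA$ are used (whence $\P(\Omega_1^{(q)})=0$ and $i_q^*=2$, while $i_k^*=1$ for $k\le q-1$), and then invoke Theorems~\ref{thm:breiman} and~\ref{prop:mainreg} together with the combinatorial evaluation of $\P(\Omega_{j_1,\ldots,j_i}^{(q)})=\Delta_i$. The only difference is one of polish: you leave the inclusion--exclusion for $\Delta_i$ and the reduction of the per-object thresholds $\max_{k:s(k)=j_l}x_k$ to $x_{j_l}$ as a sketch, whereas the paper states these directly (and is itself no more explicit about the latter point, which you rightly flag as the delicate step).
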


\begin{proof}
   First note that using similar arguments as in Lemma~\ref{ex:indpar}, we have for any $i=1,\dots,d$ that
    $\bZ\in \MRV(i\alpha,b_i,\mu_i,\E_d^{(i)})$ with canonical choices ${b_i(t) \sim (K_it)^{1/(i\alpha )}}$ and
    \beam\label{eq:Ki}
    K_i=\sum_{1\le j_{1}<\cdots<j_{i}\le d} \prod_{l=1}^{i} \kappa_{j_{l}}.
    \eeam
     Moreover,
for $\bz=(z_{1},\ldots,z_{d})^{\top} \in \E^{(d)}_d$ we have
 \begin{align} \label{limit:mui1}
\nonumber \mu_i(\{\bv\in \E^{(i)}_d: v_{j_{1}} > z_{j_{1}}, & \ldots, v_{j_{i}} > z_{j_{i}} \text{ for some }1\le  j_{1} <\cdots< j_{i}\le d\})\\ 
 & = \frac{1}{K_i}\sum_{1\le j_{1}<\cdots<j_{i}\le d} {\prod_{l=1}^{i} \kappa_{j_l} z_{j_{l}}^{-\alpha}},
 \end{align}
and
\begin{align*}
    \P(\tau_{d}^{(i)}(\bZ)>t )=\P(Z^{(i)}>t) {\sim 1/b_i^\leftarrow(t)} \sim K_it^{-i\alpha},\quad t\to \infty.
\end{align*}
The structure of $\bA$ guarantees that $\bE_i^{(q)}\big[\tau_{q,d}^{(k,i)}(\bA)^{i\alpha}\big]=1$, satisfying condition (i) of Theorems~\ref{thm:breiman} and \ref{prop:mainreg}.
 Also, referring to Remark~\ref{remarkCA} and \eqref{limit:mui1}, we have
$\mu_i(\widetilde\bCA_d^{(i)} (j))>0$ for all $j=1,\ldots, \binom{d}{i}$, satisfying condition (ii) in Theorems \ref{thm:breiman} and \ref{prop:mainreg}.
 Now we show the various parts of the result.
\begin{enumerate}[(a)]
\item
Let $1\leq k\leq q-1$. Then
$\P(\Omega_1^{(k)})>0$ and hence, the conclusion follows from Theorem~\ref{prop:mainreg}.
\item
Since each row of $\bA$ has exactly one entry 1 and all others zero, we have for $i=1,\ldots, d$,
\[\Omega_i^{(q)} =\{\omega\in \Omega: \text{exactly $i$ columns of $\bA_{\omega}$ {have at least one entry 1}} \},\]
because ${\tau^{(k,i)}_{q,d}}(\bA_\omega)<\infty$ if and only if  there are not more than $i$ columns of
$\bA_\omega$ with {positive} entries.  \\
Clearly $\Omega_1^{(q)}=\emptyset$ and we have $\P(\Omega_1^{(q)})=0$.

Now for $1\leq j_1<\ldots<j_i\leq d$ and $i=1,\ldots,d$ define
\beao
    \Omega_{j_1,\ldots,j_i}^{(q)} :=\{\omega\in \Omega: \text{exactly columns $j_1,\ldots,j_i$ of $\bA_{\omega}$ {have at least one entry 1}} \}.
\eeao
Hence for $2\leq i\leq d$,
\begin{align*}
    \P(\Omega_{j_1,\ldots,j_i}^{(q)}) &=\left(\frac{i-1}{d-1}\right)^{i}\left(\frac{i}{d-1}\right)^{d-i}
        \left(\frac{i}{d}\right)^{q-d}-i\left(\frac{i-2}{d-1}\right)^{i-1}
        \left(\frac{i-1}{d-1}\right)^{d+1-i}\left(\frac{i-1}{d}\right)^{q-d}\\
         &=:\Delta_i.
\end{align*}
Now an application of Theorem \ref{thm:breiman} yields
\beao
    \lefteqn{\P_{i}^{(q)}(\bA\bZ \in t(\bx,\binfty) )}\\
    &=&\P(\tau_{d}^{(i)}(\bZ)>t )\bE_i^{(k)}\big[ \mu_{i}(\{\bz\in\E_d^{(i)}:\bA\bz \in (\bx,\binfty)\})\big]+o\big(\P(\tau_{d}^{(i)}(\bZ)>t )\big)\\[2mm]
    &=&\sum_{1\le j_{1}<\cdots<j_{i}\le d}\mu_i\big(\bz\in\R^d_+:z_{j_1}>x_{j_1},\ldots,z_{j_i}>x_{j_i}\big) \P\big(\Omega_{j_1,\ldots,j_i}^{(q)}\big)K_it^{-i\alpha}+o(t^{-i\alpha})\\
    &=& \Delta_i \sum_{1\le j_{1}<\cdots<j_{i}\leq d}\prod_{l=1}^{i}\kappa_{j_l}x_{j_l}^{-\alpha} t^{-i\alpha}+o(t^{-i\alpha})
\eeao
which is the result in (b).
\item
 Using notation from Theorem \ref{prop:mainreg}, we have $i_d^*=2$. Also note that for $i<j$, $$\P(\Omega_{i,j}^{(q)})=\Delta_2=\frac{2^{q-2}}{(d-1)^qd^{q-d}}.$$ Therefore using Theorem~\ref{prop:mainreg}(b) we have $\bA\bZ\in\MRV(2\alpha,b_2(t),\overline{\mu}_q,\E^{(q)}_{q})$ and as $t\to\infty$,
\begin{eqnarray*}
    \P(\bA\bZ\in t(\bx,\infty))
    &=& \bE_2^{(q)} [\mu_2(\bA^{-1}((\bx,\binfty)))] \P(Z^{(2)}>t)+o(\P(Z^{(2)}>t))\\[2mm]
   &=& \Big\{\sum_{1\leq i<j\leq d} \mu_2(\bz\in \R_+^d:z_i>x_i,z_j>x_j)
        \P\left(\Omega_{i,j}^{(q)}\right)\Big\} \, K_2t^{-2\alpha}+o(t^{-2\alpha})\\
   & =&\Big\{\sum_{1\leq i<j\leq d} \kappa_i\kappa_{j}(x_ix_j)^{-\alpha}\Big\}\frac{2^{q-2}}{(d-1)^qd^{q-d}} t^{-2\alpha}+o(t^{-2\alpha})
\end{eqnarray*}
which shows (c).
    \end{enumerate}
\end{proof}

Proposition~\ref{prop:onerisk1} shows that for sets  $C \subset \E_q^{(k)}$ for $k\in\{1,\ldots, q-1\}$,  $\P(\bA\bZ \in tC)$ is of the order $t^{-\alpha}$. But for sets  of the form $t(\bx,\binfty)$ which belong to $\E_q^{(q)}$, we observe a tail probability of the order $t^{-2\alpha}$. However, if we restrict $\bA$ to $\Omega_i^{(k)}$ as in part (b), we may observe tail probabilities of the order $t^{-i\alpha}$ for all $i=2,\ldots,d$. 

In the next example we show that tail probabilities of other orders can also be observed for risk sets of the form $t(\bx,\binfty)$.
Here we fix $q=d$ and consider the same investment scenario as in Proposition~\ref{prop:onerisk1}; i.e., each agent invests in exactly one investment possibility and agents take their investment decisions independently.
As before each row is a unit vector, but the distribution of $\bA$ changes. 
Given $m\in \{1,\ldots, d-1\}$,  the single 1 in each row is  chosen uniformly on a subset of size $d-m$, the subset changing across each row.

From a mathematical point of view,  we obtain multivariate regular variation with different indices on $\E_d^{(d)}$ depending on the choice of $m$. Such a model leads to explicit expressions for the asymptotic tail probabilities for $\P(\bA\bZ\in t(\bx,\binfty))$.

\begin{proposition}\label{prop:onerisk2}
Let  $Z_{1},\ldots, Z_{d}$ be independent random variables such that \linebreak
$\P(Z_{i}>t)\sim \kappa_{i}t^{-\alpha}$ {for $\alpha>0$} as $t\to\infty$ with constants $\kappa_{i}>0$ for $i\in\mathcal{O}=\{1,\ldots, d\}$.
Let {$1\le m\le d-1$, $m\in\N$} and $\bA\in \{0,1\}^{d\times d}$ be a random adjacency matrix, where for all $k \in \mathcal{A}=\{1,\ldots,d\}$ independently
\[\P(\,A_{k i}=1 \text{ and } A_{k j}=0 \text{ for } j\neq i ) = \frac{1}{d-m}, \quad i\in I_k,
\]
where $I_k$ is defined as
\beao
    I_k=\left\{\begin{array}{ll}
        \{1,\ldots,d\}\backslash\{k,\ldots,k+m-1\} & \text{ if } k+m-1\leq d,\\
        \{1,\ldots,d\}\backslash\{\{k,\ldots,d\}\cup\{1,\ldots,k+m-1-d\}\}     & \text{ if } k+m-1> d.
    \end{array}
    \right.
\eeao
Also define for $m+1\leq i \leq d$,
\beao
    q_i^{(m)}=\frac{i^{d-(i+m-1)}(i-m)^{i-m+1}\prod_{l=1}^{m-1} (i-l)^2}{(d-m)^d}.
\eeao
Then the following assertions hold:
\begin{enumerate}[(a)]
  \item
For $1\leq k\leq d-m$
we have $\bA\bZ\in\MRV(\alpha,b_1,\overline{\mu}_k,\E^{(k)}_{d})$ with
$$ \overline{\mu}_k(\cdot)=\bE_1^{(k)}\left[ \mu_1(\{\bz\in \E^{(1)}_{d}:\bA\bz \in \,\cdot\,\})\right],$$
and for $d-m<k\leq d$ we have
$\bA\bZ\in\MRV(j\alpha,b_j,\overline{\mu}_{k},\E^{(k)}_{d})$
with $$\overline{\mu}_k(\cdot)=\bE^{(k)}_{j}\left[ \mu_{j}(\{\bz\in \E^{(j)}_{d}:\bA\bz \in \,\cdot\,\})\right],$$
where $j=k+m+1-d$, $\mu_j$ is defined as in \eqref{limit:mui1}, and $b_j(t)\sim(K_jt)^{1/j\alpha}$ with $K_j$ as in \eqref{eq:Ki}.
\item For  $m+1\leq i \leq d$, and $\bx\in \E_d^{(d)}$ we have as $t\to\infty$,
\begin{eqnarray*}
 \P_{i}^{(d)}(\bA\bZ \in t(\bx,\binfty) )   &=& \left\{\sum\limits_{{1\le j_{1}<\cdots<j_{i}\le d}}  \prod\limits_{l=1}^{i } \kappa_{j_{l}}x_{j_l}^{-\alpha}\right\} \left\{q_i^{(m)}-q_{i-1}^{(m)}\right\} t^{-i \alpha} + o(t^{-i \alpha}).
\end{eqnarray*}
\item
For $k=d$, part (a) applies with
\beao
    \overline{\mu}_d(\left(\bx,\binfty\right))=K_{m+1}^{-1}q_{m+1}^{(m)}\left\{\sum\limits_{{1\le j_{1}<\cdots<j_{i}\le d}}  \prod\limits_{l=1}^{i } \kappa_{j_{l}}x_{j_l}\right\}, \quad \bx\in\E_d^{(d)},
\eeao
with $K_{m+1}$ as in \eqref{eq:Ki}, and we have as $t\to\infty$,
\begin{align*}
\P(\bA\bZ \in t(\bx,\binfty) )   = & K_{m+1}\overline{\mu}_d(\left(\bx,\binfty\right)) t^{-(m+1)\alpha} + o(t^{-(m+1)\alpha}).
\end{align*}
\end{enumerate}
\end{proposition}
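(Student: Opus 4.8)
The plan is to run the template of the proof of Proposition~\ref{prop:onerisk1}, the only genuinely new work being the combinatorial analysis of the cyclic support sets $I_k$. First I would record, exactly as in the opening of that proof, that independence of the $Z_i$ together with the tail asymptotics $\P(Z_i>t)\sim\kappa_it^{-\alpha}$ gives $\bZ\in\MRV(i\alpha,b_i,\mu_i,\E_d^{(i)})$ for every $i=1,\dots,d$, with $b_i(t)\sim(K_it)^{1/(i\alpha)}$ and $K_i$ as in \eqref{eq:Ki}, $\mu_i$ as in \eqref{limit:mui1}, and $\P(Z^{(i)}>t)\sim K_it^{-i\alpha}$; this is the same computation as in Example~\ref{ex:indpar}. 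Since every row of $\bA$ is a single unit vector, $\tau_{d,d}^{(k,i)}(\bA)\le 1$ almost surely, so the moment condition (i) of Theorems~\ref{thm:breiman} and \ref{prop:mainreg} holds trivially, and condition (ii) is immediate from \eqref{limit:mui1}. Both theorems are therefore applicable, and the whole argument reduces to identifying the partition $\{\Omega_i^{(k)}\}$ and evaluating the resulting limit measures.

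The structural heart is the description of $\Omega_i^{(k)}$. Because each row carries exactly one $1$, the reasoning in Proposition~\ref{prop:onerisk1}(b) shows that for $k=q=d$ one has $\tau_{d,d}^{(d,i)}(\bA_\omega)<\infty$ precisely when $\bA_\omega$ has at most $i$ nonzero columns, so $\Omega_i^{(d)}=\{\omega:\bA_\omega\text{ has exactly }i\text{ hit columns}\}$; more generally $i_k(\bA_\omega)$ is the least number of columns whose rows account for at least $k$ of the $d$ rows. For part (a) I would then track how the cyclic exclusion $I_k=\{1,\dots,d\}\setminus\{k,\dots,k+m-1\}$ limits how many rows a prescribed set of columns can absorb; this determines $i_k^*$ and produces the two regimes stated, namely $i_k^*=1$ for $1\le k\le d-m$ and $i_k^*=k+m+1-d$ for $d-m<k\le d$. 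Part (a) then follows verbatim from Theorem~\ref{prop:mainreg}(b), giving $\bA\bZ\in\MRV(\alpha_{i_k^*},b_{i_k^*},\overline{\mu}_k,\E_d^{(k)})$ with $\overline{\mu}_k=\bE_{i_k^*}^{(k)}[\mu_{i_k^*}(\bA^{-1}(\,\cdot\,))]$.

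For part (b) I would apply Theorem~\ref{thm:breiman} on each block $\Omega_i^{(d)}$ separately, writing
\[
\P_i^{(d)}(\bA\bZ\in t(\bx,\binfty))=\P(Z^{(i)}>t)\,\bE_i^{(d)}\bigl[\mu_i\bigl(\bA^{-1}((\bx,\binfty))\bigr)\bigr]+o(t^{-i\alpha}),
\]
and decompose $\bE_i^{(d)}[\mu_i(\bA^{-1}((\bx,\binfty)))]$ over the possible hit-column sets $\{j_1,\dots,j_i\}$. On the event that exactly these columns are hit, the support of $\mu_i$ forces the relevant preimage to be $\{z_{j_1}>x_{j_1},\dots,z_{j_i}>x_{j_i}\}$, whose $\mu_i$-mass is $K_i^{-1}\prod_{l=1}^i\kappa_{j_l}x_{j_l}^{-\alpha}$ by \eqref{limit:mui1}; multiplying by the $K_i$ coming from $\P(Z^{(i)}>t)$ yields the sum $\sum_{j_1<\cdots<j_i}\prod_l\kappa_{j_l}x_{j_l}^{-\alpha}$. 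What remains is the probability of each block, and this is where the cyclic combinatorics enters: the chance that the hit columns are confined to a given set $S$ equals $\prod_{k=1}^d|I_k\cap S|/(d-m)$, a quantity that depends on how $S$ meets the windows $\{k,\dots,k+m-1\}$. The task is to show that after an inclusion--exclusion over nested column sets the net contribution collapses to the telescoped factor $q_i^{(m)}-q_{i-1}^{(m)}$, which I expect to be the single delicate computation of the proof.

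Finally, part (c) is the instance $i=i_d^*=m+1$ of part (b): since $\Omega_i^{(d)}$ is empty for $i\le m$ and the contributions of the blocks with $i>m+1$ are $o(t^{-(m+1)\alpha})$ by the tail ordering $\P(Z^{(i)}>t)=o(\P(Z^{(m+1)}>t))$, only the $i=m+1$ term survives, and Theorem~\ref{prop:mainreg}(b) upgrades the resulting limit to the regular-variation statement $\bA\bZ\in\MRV((m+1)\alpha,b_{m+1},\overline{\mu}_d,\E_d^{(d)})$ with the displayed $\overline{\mu}_d$ and the constant $q_{m+1}^{(m)}$. The main obstacle throughout is the bookkeeping of the third paragraph: correctly counting the block probabilities under the cyclic support constraints and verifying that the inclusion--exclusion telescopes to $q_i^{(m)}-q_{i-1}^{(m)}$. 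The regular-variation machinery itself is applied essentially unchanged from Proposition~\ref{prop:onerisk1}.
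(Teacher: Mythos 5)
Your proposal follows essentially the same route as the paper: the paper's own proof consists only of defining $q_i^{(m)}$ as the probability that all ones of $\bA_\omega$ lie within a prescribed set of $i$ columns, asserting $\P(\Omega_{j_1,\ldots,j_i}^{(d)})=q_i^{(m)}-q_{i-1}^{(m)}$, and then invoking the argument of Proposition~\ref{prop:onerisk1} verbatim, which is exactly your plan (including the correct identification of the partition via hit columns, of $i_k^*$ in the two regimes, and of part (c) as the $i=m+1$ block). The one combinatorial step you flag as delicate and leave unverified --- that the containment probabilities under the cyclic windows $I_k$ reduce to the stated $q_i^{(m)}$ and telescope --- is precisely the step the paper also asserts without proof, so your write-up is at the same level of detail as the original.
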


\begin{proof}
For $m\leq i+1\leq d$,
\beao
    q_i^{(m)}=\P(\{\omega\in \Omega: \text{only in columns $j_1,\ldots,j_i$ of $\bA_{\omega}$ appears 1} \})
\eeao
such that $\P(\Omega_{j_1,\ldots,j_i}^{(d)})=q_i^{(m)}-q_{i-1}^{(m)}$ and $\P(\Omega_{j_1,\ldots,j_{m+1}}^{(d)})=q_{m+1}^{(m)}$.
The proposition  can then be proved in a similar manner as Proposition~\ref{prop:onerisk1}, and is omitted here.
\end{proof}

\subsection{Dependent objects}\label{ss43}


In this section we contrast independent objects as we have considered previously with a specific dependence structure of the components of $\bZ$. Moreover, we also investigate the influence of weights in a numerical example.
Let $\bX=\bA\bZ$ be the investment portfolios of five agents, each of whom connects to a subset of three objects whose risks are given by $\bZ$.
We estimate the tail risks for $k=1,\ldots,5$:
\begin{align*}
&\P(\text{risk of at least $k$ of the portfolios}>t) =: \P(\bX\in tD_k).
\end{align*}
We use a weighted adjacency matrix, which is now, however, deterministic and in both examples given by
\begin{align*}
\bA = \begin{bmatrix}
       W_{11} & 0 & 0          \\[0.3em]
       0 & W_{22} & 0         \\[0.3em]
       0 & 0 & W_{33}          \\[0.3em]
       W_{41} & W_{42} & 0 \\[0.3em]
       0 & W_{52} & W_{53}
     \end{bmatrix}
 \end{align*}
with weights $W_{11},W_{22},  W_{33},  W_{41},  W_{42},  W_{52},  W_{53}>0$.
Also for the convenience of computing the limit measures of the sets $D_k$ we assume $ W_{41} W_{11}^{-1}+ W_{42} W_{22}^{-1}>1$ and $ W_{52} W_{22}^{-1}+ W_{53} W_{33}^{-1}>1$.

Moreover, we assume that $\bZ=(Z_1,Z_2,Z_3)^{\top}$ has a probability distribution given by
\beam\label{depdis}
\P(Z_1\le z_1, Z_2 \le z_2, Z_3\le z_3) = (1+\theta(\kappa_1\kappa_2\kappa_3)^{\rho}(z_1z_2z_3)^{-\rho\alpha})\prod_{i=1}^3(1-\kappa_iz_i^{-\alpha}),
\eeam
for $ z_i\ge \kappa_i^{1/\alpha}$, where $\kappa_i>0, i=1,2,3,\; \alpha>0,\rho\ge 1, 0\le \theta\le 1$.
For $\theta=0$ the components of $\bZ$ are independent Pareto (cf. Example~\ref{ex:det}) and for $\rho=1, \theta=1$ they are dependent (cf. Example~\ref{ex:nonind}).
Such dependence in terms of copulas has been discussed in \citet{rodriguesetal:2004}.

This setting implies that in the two examples below, the underlying distribution of $\bZ$ has either independent marginals or at least it has a tractable form; the adjacency matrix $\bA$ is relatively simple, in order to provide an interpretable illustration.

\begin{example}\label{ex:det}
Suppose $Z_{1}, Z_{2}, Z_{3}$ are independent random variables such that \linebreak $\P(Z_{i}>z)= \kappa_{i}z^{-\alpha}, z>\kappa_{i}^{1/\alpha}$ with constants $\kappa_{i}>0$ for $i=1,2,3$.
\begin{figure}[t!]
    \centering
    \includegraphics[width=0.44\textwidth]{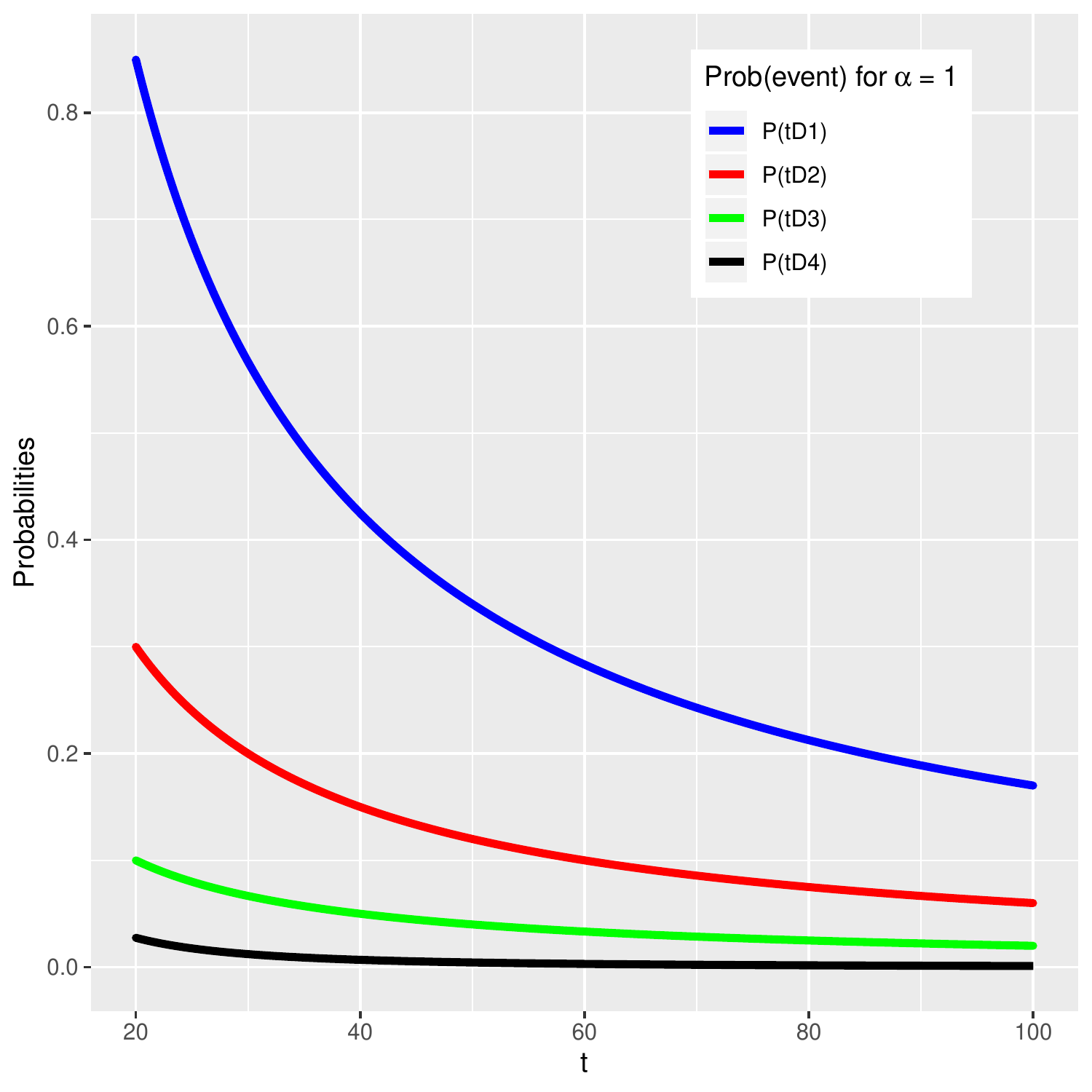}
    \includegraphics[width=0.44\textwidth]{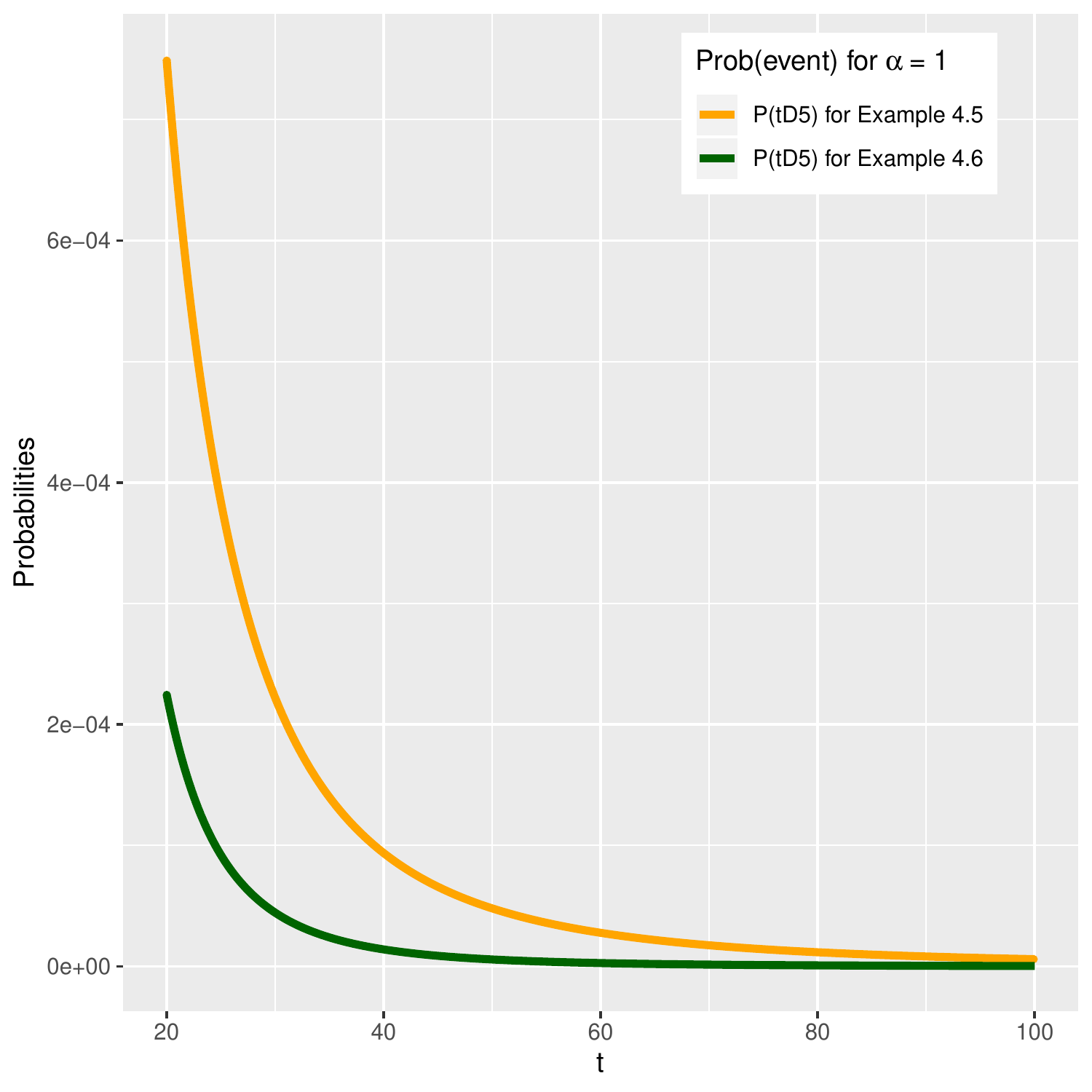}
    \includegraphics[width=0.44\textwidth]{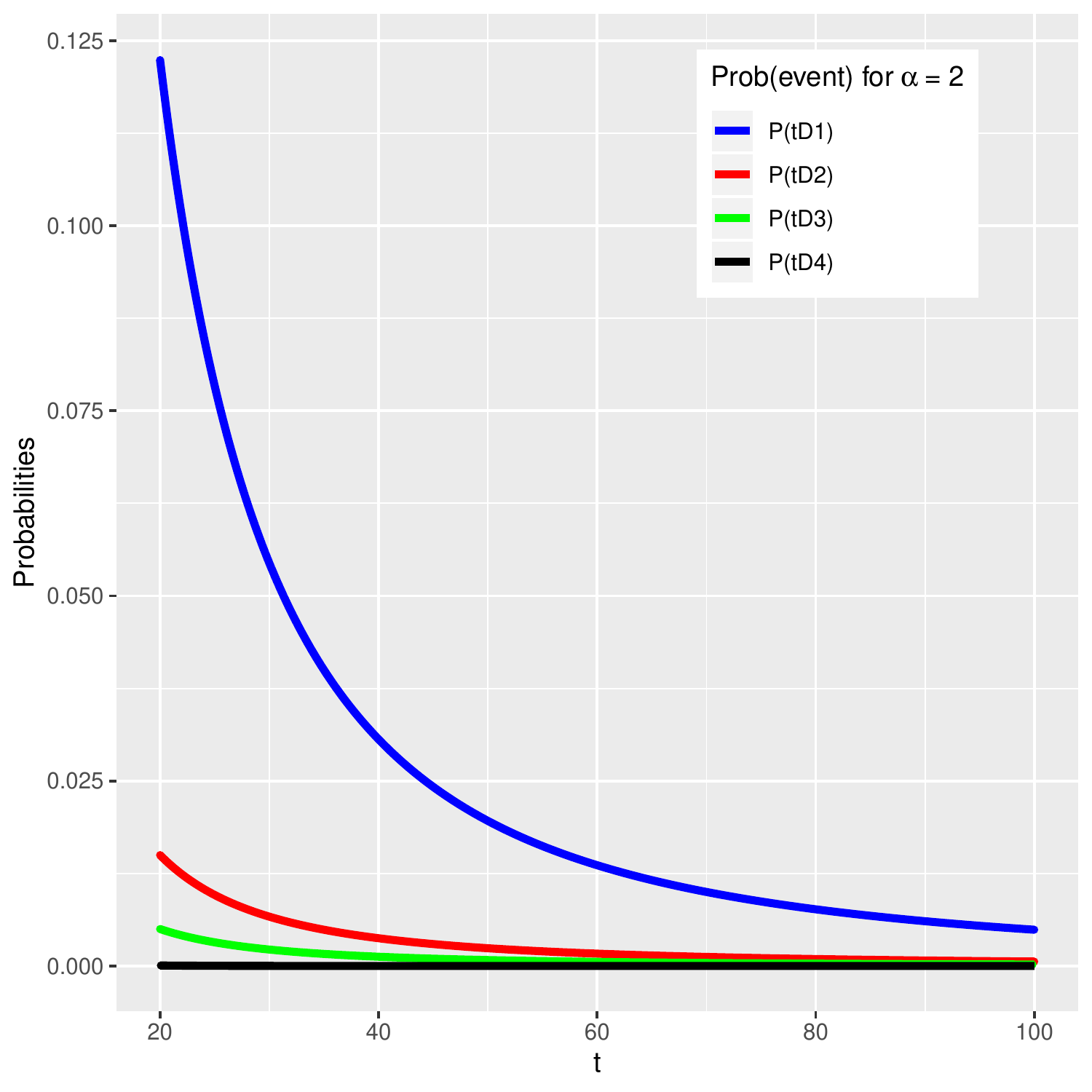}
    \includegraphics[width=0.44\textwidth]{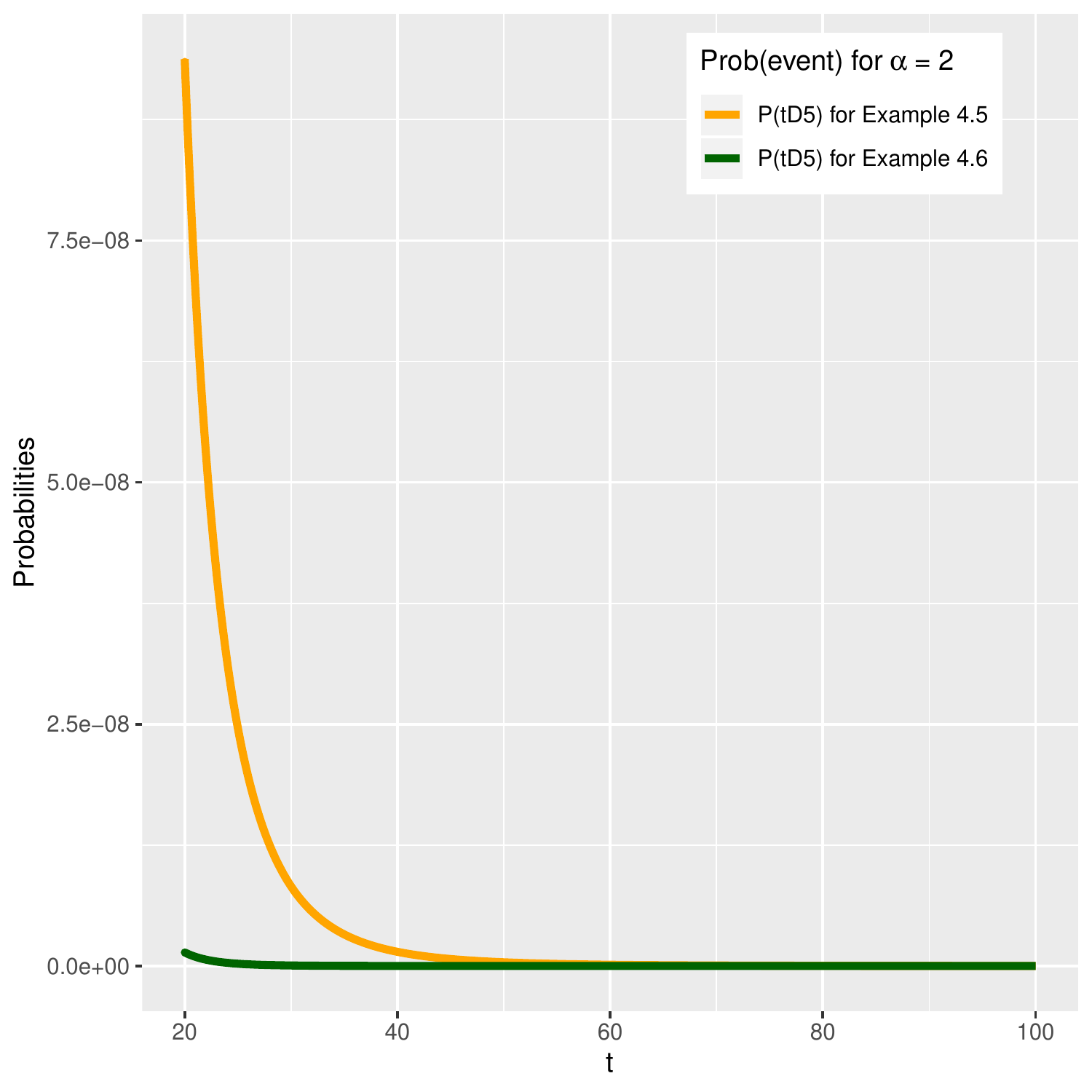}
\caption{The probabilities of the tail events $tD_1, tD_2, tD_3, tD_4$ are asymptotically equal in both Examples~\ref{ex:det} and~\ref{ex:nonind} and are plotted in the left panels with $\alpha=1$ in the top and $\alpha=2$ in the bottom. The probabilities of the tail events $tD_5$ are asymptotically different in the two examples and are plotted in the right panels ($\alpha=1$ in the top and $\alpha=2$ in the bottom). Values are plotted for $20\le t\le 100$.}
    \label{fig:plotprob1}
\end{figure}
We calculate all relevant quantities.
First, the tails of the order statistics are given for $t\to\infty$,
\begin{align}\begin{split}\label{eq:Pzi}
    \P(Z^{(1)}>t) &= (\kappa_1+\kappa_2+\kappa_3)t^{-\alpha} + o(t^{-\alpha}),\\
     \P(Z^{(2)}>t) &= (\kappa_1\kappa_2+\kappa_2\kappa_3+\kappa_3\kappa_1)t^{-2\alpha} + o(t^{-2\alpha}),\\
      \P(Z^{(3)}>t) &= (\kappa_1\kappa_2\kappa_3)t^{-3\alpha}.
      \end{split}
\end{align}
We have $\bZ\in \MRV(i\alpha,b_i,\mu_i,\E_3^{(i)})$ with canonical $b_i$ given by
\begin{align}
b_1(t) & = (\kappa_1+\kappa_2+\kappa_3)^{1/\alpha}t^{1/\alpha},\nonumber\\
 b_2(t) &= (\kappa_1\kappa_2+\kappa_2\kappa_3+\kappa_3\kappa_1)^{1/2\alpha}t^{1/2\alpha}, \label{def:b1b2}\\
b_3(t) & = (\kappa_1\kappa_2\kappa_3)^{1/3\alpha}t^{1/3\alpha}, \nonumber
\end{align}
and limit measures
\begin{align}
\begin{split}\label{eq:mui}
   & \mu_1\Big(\bigcup\limits_{i=1}^3 \left\{\bv\in \R_+^3: v_i>z_i\right\}\Big) =  ({\kappa_1+\kappa_2+\kappa_3})^{-1} \sum_{i=1}^3 \kappa_iz_i^{-\alpha},\\
    & \mu_2\Big(\bigcup\limits_{1\le i\neq j\le 3}\left\{\bv\in \R_+^3: v_i>z_i, v_j>z_j\right\}\Big) =  ({\kappa_1\kappa_2+\kappa_2\kappa_3+\kappa_3\kappa_1})^{-1}\sum\limits_{1\le i\neq j \le 3} \kappa_i\kappa_j(z_iz_j)^{-\alpha},\\
     & \mu_3\left((z_1,\infty)\times(z_2,\infty)\times(z_3,\infty)\right) =  (z_1z_2z_3)^{-\alpha}.
\end{split}
\end{align}
Note that $D_k\subset\E_5^{(k)}$ for $k=1,\ldots,5$. We can check from the form of $\bA$ that
\[i_1^*=1,\quad i_2^*=1, \quad i_3^*=1,\quad i_4^*=2, \quad i_5^*=3.\]
Hence using Theorem~\ref{prop:mainreg}, along with \eqref{eq:Pzi} and \eqref{eq:mui}, we have as $t\to\infty$,
\begin{align*}
 \P(\bA\bZ\in tD_1) &\sim \P(Z^{(1)}>t)\mu_1(\bA^{-1}(D_1)) \\
 &\sim \left[\kappa_1(\max( W_{11}^{\alpha}, W_{41}^{\alpha}))+\kappa_2(\max( W_{22}^{\alpha}, W_{42}^{\alpha}, W_{52}^{\alpha}))+\kappa_3(\max( W_{33}^{\alpha}, W_{53}^{\alpha}))\right]\, t^{-\alpha}.
 \end{align*}
 Similarly, we can show that as $t\to\infty$,
 \begin{align*}
 \P(\bA\bZ\in tD_2) & \sim \P(Z^{(1)}>t)\mu_1(\bA^{-1}(D_2)) \\
  &\sim \left[\kappa_1(\min( W_{11}^{\alpha}, W_{41}^{\alpha}))+\kappa_2(\min( W_{22}^{\alpha}, W_{42}^{\alpha}, W_{52}^{\alpha}))+\kappa_3(\min( W_{33}^{\alpha}, W_{53}^{\alpha}))\right]\, t^{-\alpha}.\\
 \P(\bA\bZ\in tD_3) & \sim \P(Z^{(1)}>t)\mu_1(\bA^{-1}(D_3))\\
                 &\sim \kappa_2\min( W_{22}^{\alpha}, W_{42}^{\alpha}, W_{52}^{\alpha})\, t^{-\alpha},\\
 \P(\bA\bZ\in tD_4) & \sim \P(Z^{(2)}>t)\mu_2(\bA^{-1}(D_4))\\
                 & \sim \left[\kappa_1\kappa_2 {W_{11}^{\alpha}}\min( W_{22}^{\alpha}, W_{52}^{\alpha})+ \kappa_2\kappa_3\min( W_{22}^{\alpha}, W_{42}^{\alpha}) W_{33}^{\alpha}\right.\\ & \quad\quad \quad\quad \quad\quad + \left.\kappa_3\kappa_1\min( W_{33}^{\alpha}, W_{53}^{\alpha})\min( W_{11}^{\alpha}, W_{41}^{\alpha})\right]\, t^{-2\alpha},\\
 \P(\bA\bZ\in tD_5) & \sim \P(Z^{(3)}>t)\mu_3(\bA^{-1}(D_5)) \sim \kappa_1\kappa_2\kappa_3 W_{11}^{\alpha}  W_{22}^{\alpha}  W_{33}^{\alpha} t^{-3\alpha}.
\end{align*}
The forms for $\P(\bA\bZ\in tD_4)$ and $\P(\bA\bZ\in tD_5)$ become more complicated if we do not assume $ W_{11}, W_{22},  W_{33},  W_{41},  W_{42},  W_{52},  W_{53}>0$. Furthermore, if all weights are equal to one, the above formulas hold true.

As an illustration, we fix $\kappa_1=1,\kappa_2=2,\kappa_3=3$.
Moreover, let $ W_{11}= W_{22}= W_{33}=1$, $ W_{41}=2,  W_{42}=2$, and $ W_{52}= W_{53}=3$. The five probabilities obtained above are plotted for the case $\alpha=1, 2$ in Figure \ref{fig:plotprob1} for $20\le t \le 100$.
We also compare the probability for the event $tD_5$ in this example and in Example \ref{ex:nonind}; see the right two panels of Figure \ref{fig:plotprob1}.
\halmos
\end{example}

\begin{example}\label{ex:nonind}
Suppose $Z_1,Z_2,Z_3$ are dependent with joint distribution \eqref{depdis} for $\rho=\theta=1$.
Otherwise assume the setting as in Example~\ref{ex:det}
We calculate again all relevant quantities. First, the tails of the order statistics are given for $t\to\infty$,
\begin{align*}\begin{split}
    \P(Z^{(1)}>t) &= (\kappa_1+\kappa_2+\kappa_3)t^{-\alpha} + o(t^{-\alpha}),\\
     \P(Z^{(2)}>t) &= (\kappa_1\kappa_2+\kappa_2\kappa_3+\kappa_3\kappa_1)t^{-2\alpha} + o(t^{-2\alpha}),\\
      \P(Z^{(3)}>t) &= \kappa_1\kappa_2\kappa_3(\kappa_1+\kappa_2+\kappa_3)t^{-4\alpha} + o(t^{-4\alpha}).
      \end{split}
\end{align*}
Notice that the only difference from \eqref{eq:Pzi} is in the term $\P(Z^{(3)}>t)$.
Hence,  for $i=1,2$ we have $\bZ\in \MRV(i\alpha,b_i,\mu_i,\E_3^{(i)})$ with canonical choice for $b_i$ as in \eqref{def:b1b2} and $\mu_i$ as in \eqref{eq:mui}.
 On the other hand, we have $\bZ\in \MRV(4\alpha,b_3,\mu_3,\E_3^{(3)})$ with
\[b_3(t)= (\kappa_1\kappa_2\kappa_3(\kappa_1+\kappa_2+\kappa_3))^{1/(4\alpha)}t^{1/(4\alpha)},\]
and
\begin{align*}
\begin{split}
     & \mu_3\left((z_1,\infty)\times(z_2,\infty)\times(z_3,\infty)\right) = ({\kappa_1+\kappa_2+\kappa_3})^{-1} \sum_{i=1}^3 \kappa_iz_i^{-\alpha} (z_1z_2z_3)^{-\alpha}.
\end{split}
\end{align*}
As in Example \ref{ex:det}, we have $i_1^*=1,i_2^*=1,i_3^*=1,i_4^*=2,i_5^*=3.$
Using Theorem~\ref{prop:mainreg}, along with \eqref{eq:Pzi} and \eqref{eq:mui}, we have the same limits for $\P(\bA\bZ\in tD_k)$ for $k=1,\ldots,4$. The only difference occurs for $k=5$, where we have for $t\to\infty$,
\begin{align*}
 \P(\bA\bZ\in tD_5) & \sim \P(Z^{(3)}>t)\mu_3(\bA^{-1}(D_5))\\
                               &  \sim \left[\kappa_1\kappa_2\kappa_3 W_{11}^{\alpha}  W_{22}^{\alpha}  W_{33}^{\alpha} (\kappa_1 W_{11}^{\alpha}+\kappa_2 W_{22}^{\alpha}+\kappa_3 W_{33}^{\alpha})\right]t^{-4\alpha}.
\end{align*}
{Again we fix $\kappa_1=1,\kappa_2=2,\kappa_3=3$ and  let $ W_{11}= W_{22}= W_{33}=1$, $ W_{41}=2,  W_{42}=2$, and $ W_{52}=W_{53}=3$, as in Example \ref{ex:det}. The  probabilities for events $tD_1, tD_2, tD_3, tD_4$ asymptotically remain the same as  in Example \ref{ex:det} (matching the plots in the left two panels of Figure \ref{fig:plotprob1} for the case $\alpha=1, 2$). 
In the right panels Figure \ref{fig:plotprob1} we plot the values for $tD_5$ when $\alpha=1, 2$; clearly these values differ in the two examples.}
\halmos
\end{example}

\section{Conclusion}\label{sec:concl}

This work is motivated by the need to find probabilities of a variety of extreme events under a linear transformation of regularly varying random vectors.  By an extension of Breiman's Theorem we have shown that  probabilities of many such events can be calculated, if we have information on the regular variation property of the underlying random vector on specific subcones of the positive quadrant. Most of the subsets $C$ of such cones have linear boundaries and hence form a polytope, whose pre-image under linear transformation also turns out to be a polytope in $\R_+^d$. Computing the limit measures $\bE_i^{(k)}[\mu_i(\bA^{-1}(C))]$ in such cases means finding the appropriate boundaries of the polytope which can become quite complicated.  For moderate dimensions of the matrix $\bA$, numerical solutions can be obtained even when the distributional forms of $\bZ$ and $\bA$ are more complicated.

We envisage wide application of such results in  areas of risk management. There are clear implications  for computing conditional value at risk, as well as a variety of conditional risk measures. We also believe that an alternative characterization of the rate of decay of tail probabilities can be provided via connectivity of the row components (in the bipartite network model, the agents); this work is under current investigation.

\bibliographystyle{imsart-nameyear}
\bibliography{bibfilenew}

\def\cprime{$'$}
\begin{thebibliography}{27}

\bibitem[\protect\citeauthoryear{Basrak, Davis and
  Mikosch}{2002}]{basrak:davis:mikosch:2002a}
\begin{barticle}[author]
\bauthor{\bsnm{Basrak},~\bfnm{B.}\binits{B.}},
  \bauthor{\bsnm{Davis},~\bfnm{R.~A.}\binits{R.~A.}} \AND
  \bauthor{\bsnm{Mikosch},~\bfnm{T.}\binits{T.}}
(\byear{2002}).
\btitle{Regular variation of {GARCH} processes}.
\bjournal{Stoch. {P}roc. and their {A}ppl.}
\bvolume{99}
\bpages{95--115}.
\end{barticle}
\endbibitem

\bibitem[\protect\citeauthoryear{Bingham, Goldie and
  Teugels}{1989}]{bingham:goldie:teugels:1989}
\begin{bbook}[author]
\bauthor{\bsnm{Bingham},~\bfnm{N.~H.}\binits{N.~H.}},
  \bauthor{\bsnm{Goldie},~\bfnm{C.~M.}\binits{C.~M.}} \AND
  \bauthor{\bsnm{Teugels},~\bfnm{J.~L.}\binits{J.~L.}}
(\byear{1989}).
\btitle{Regular Variation}.
\bseries{Encyclopedia of Mathematics and its Applications}
\bvolume{27}.
\bpublisher{Cambridge University Press}, \baddress{Cambridge}.
\end{bbook}
\endbibitem

\bibitem[\protect\citeauthoryear{Breiman}{1965}]{breiman:1965}
\begin{barticle}[author]
\bauthor{\bsnm{Breiman},~\bfnm{L.}\binits{L.}}
(\byear{1965}).
\btitle{On some limit theorems similar to the arc-sin law}.
\bjournal{Theory Probab. Appl.}
\bvolume{10}
\bpages{323--331}.
\end{barticle}
\endbibitem

\bibitem[\protect\citeauthoryear{Buraczewski, Damek and
  Mikosch}{2016}]{buraczewski:damek:mikosch:2016}
\begin{bbook}[author]
\bauthor{\bsnm{Buraczewski},~\bfnm{D.}\binits{D.}},
  \bauthor{\bsnm{Damek},~\bfnm{E.}\binits{E.}} \AND
  \bauthor{\bsnm{Mikosch},~\bfnm{T.}\binits{T.}}
(\byear{2016}).
\btitle{Stochastic Models with Power-law Tails}.
\bseries{Springer Series in Operations Research and Financial Engineering}.
\bpublisher{Springer}.
\end{bbook}
\endbibitem

\bibitem[\protect\citeauthoryear{Chakraborty and
  Hazra}{2018}]{chakraborty:hazra:2018}
\begin{barticle}[author]
\bauthor{\bsnm{Chakraborty},~\bfnm{S.}\binits{S.}} \AND
  \bauthor{\bsnm{Hazra},~\bfnm{R.~S}\binits{R.~S.}}
(\byear{2018}).
\btitle{Boolean convolutions and regular variation}.
\bjournal{ALEA Lat. Am. J. Probab. Math. Stat.}
\bvolume{15}
\bpages{961--991}.
\end{barticle}
\endbibitem

\bibitem[\protect\citeauthoryear{Cline and
  Samorodnitsky}{1991}]{cline:samorodnitsky:1994}
\begin{barticle}[author]
\bauthor{\bsnm{Cline},~\bfnm{D.~B.~H.}\binits{D.~B.~H.}} \AND
  \bauthor{\bsnm{Samorodnitsky},~\bfnm{G.}\binits{G.}}
(\byear{1991}).
\btitle{Subexponentiality of the product of independent random variables}.
\bjournal{Stoch. {P}roc. and their {A}ppl.}
\bvolume{49}
\bpages{75-98}.
\end{barticle}
\endbibitem

\bibitem[\protect\citeauthoryear{Das, Mitra and
  Resnick}{2013}]{das:mitra:resnick:2013}
\begin{barticle}[author]
\bauthor{\bsnm{Das},~\bfnm{B.}\binits{B.}},
  \bauthor{\bsnm{Mitra},~\bfnm{A.}\binits{A.}} \AND
  \bauthor{\bsnm{Resnick},~\bfnm{S.~I.}\binits{S.~I.}}
(\byear{2013}).
\btitle{Living on the multidimensional edge: seeking hidden risks using regular
  variation}.
\bjournal{Adv. in Appl. Probab.}
\bvolume{45}
\bpages{139--163}.
\end{barticle}
\endbibitem

\bibitem[\protect\citeauthoryear{Das and Resnick}{2015}]{das:resnick:2015}
\begin{barticle}[author]
\bauthor{\bsnm{Das},~\bfnm{B.}\binits{B.}} \AND
  \bauthor{\bsnm{Resnick},~\bfnm{S.~I.}\binits{S.~I.}}
(\byear{2015}).
\btitle{Models with hidden regular variation: Generation and detection}.
\bjournal{Stochastic Systems}
\bvolume{5}
\bpages{195-238}.
\end{barticle}
\endbibitem

\bibitem[\protect\citeauthoryear{de~Haan and
  Ferreira}{2006}]{dehaan:ferreira:2006}
\begin{bbook}[author]
\bauthor{\bparticle{de} \bsnm{Haan},~\bfnm{L.}\binits{L.}} \AND
  \bauthor{\bsnm{Ferreira},~\bfnm{A.}\binits{A.}}
(\byear{2006}).
\btitle{Extreme Value Theory: An Introduction}.
\bpublisher{Springer-Verlag}, \baddress{New York}.
\end{bbook}
\endbibitem

\bibitem[\protect\citeauthoryear{Denisov and Zwart}{2007}]{denisov:zwart:2007}
\begin{barticle}[author]
\bauthor{\bsnm{Denisov},~\bfnm{D.}\binits{D.}} \AND
  \bauthor{\bsnm{Zwart},~\bfnm{B.}\binits{B.}}
(\byear{2007}).
\btitle{On a theorem of {B}reiman and a class of random difference equations}.
\bjournal{J. Appl. Probab.}
\bvolume{44}
\bpages{1031--1046}.
\end{barticle}
\endbibitem

\bibitem[\protect\citeauthoryear{Ding and Rhee}{2014}]{dingrhee}
\begin{barticle}[author]
\bauthor{\bsnm{Ding},~\bfnm{J.}\binits{J.}} \AND
  \bauthor{\bsnm{Rhee},~\bfnm{N.~H.}\binits{N.~H.}}
(\byear{2014}).
\btitle{When a matrix and its inverse are nonnegative}.
\bjournal{Missouri J. of Math. Sci.}
\bvolume{26}.
\end{barticle}
\endbibitem

\bibitem[\protect\citeauthoryear{Fougeres and
  Mercadier}{2012}]{fougeres:mercadier:2012}
\begin{barticle}[author]
\bauthor{\bsnm{Fougeres},~\bfnm{A.~L.}\binits{A.~L.}} \AND
  \bauthor{\bsnm{Mercadier},~\bfnm{C.}\binits{C.}}
(\byear{2012}).
\btitle{Risk measures and multivariate extensions of {B}reiman's theorem}.
\bjournal{J. Appl. Probab.}
\bvolume{49}
\bpages{364--384}.
\end{barticle}
\endbibitem

\bibitem[\protect\citeauthoryear{Horn and Johnson}{2013}]{horn:johnson:2013}
\begin{bbook}[author]
\bauthor{\bsnm{Horn},~\bfnm{R.~A.}\binits{R.~A.}} \AND
  \bauthor{\bsnm{Johnson},~\bfnm{C.~R.}\binits{C.~R.}}
(\byear{2013}).
\btitle{Matrix Analysis},
\bedition{Second} ed.
\bpublisher{Cambridge University Press, Cambridge}.
\end{bbook}
\endbibitem

\bibitem[\protect\citeauthoryear{Hult and Lindskog}{2006}]{hult:lindskog:2006a}
\begin{barticle}[author]
\bauthor{\bsnm{Hult},~\bfnm{H.}\binits{H.}} \AND
  \bauthor{\bsnm{Lindskog},~\bfnm{F.}\binits{F.}}
(\byear{2006}).
\btitle{Regular variation for measures on metric spaces}.
\bjournal{Publications de l'Institut Math\'{e}matique, Nouvelle S\'{e}rie}
\bvolume{80(94)}
\bpages{121--140}.
\end{barticle}
\endbibitem

\bibitem[\protect\citeauthoryear{Janssen and Drees}{2016}]{janssen:drees:2016}
\begin{barticle}[author]
\bauthor{\bsnm{Janssen},~\bfnm{A.}\binits{A.}} \AND
  \bauthor{\bsnm{Drees},~\bfnm{H.}\binits{H.}}
(\byear{2016}).
\btitle{A stochastic volatility model with flexible extremal dependence
  structure}.
\bjournal{Bernoulli}
\bvolume{22}
\bpages{1448-1490}.
\end{barticle}
\endbibitem

\bibitem[\protect\citeauthoryear{Jessen and
  Mikosch}{2006}]{jessen:mikosch:2006}
\begin{barticle}[author]
\bauthor{\bsnm{Jessen},~\bfnm{A.~H.}\binits{A.~H.}} \AND
  \bauthor{\bsnm{Mikosch},~\bfnm{T.}\binits{T.}}
(\byear{2006}).
\btitle{Regularly varying functions}.
\bjournal{Publ. Inst. Math. (Beograd) (N.S.)}
\bvolume{80}
\bpages{171--192}.
\end{barticle}
\endbibitem

\bibitem[\protect\citeauthoryear{Kley, Kl{\"u}ppelberg and
  Reinert}{2016}]{kley:kluppelberg:reinert:2016}
\begin{barticle}[author]
\bauthor{\bsnm{Kley},~\bfnm{O.}\binits{O.}},
  \bauthor{\bsnm{Kl{\"u}ppelberg},~\bfnm{C.}\binits{C.}} \AND
  \bauthor{\bsnm{Reinert},~\bfnm{G.}\binits{G.}}
(\byear{2016}).
\btitle{Risk in a large claims insurance market with bipartite graph
  structure}.
\bjournal{Operations Research}
\bvolume{64}
\bpages{1159--1176}.
\end{barticle}
\endbibitem

\bibitem[\protect\citeauthoryear{Kley, Kl\"uppelberg and
  Reinert}{2018}]{kley:kluppelberg:reinert:2017}
\begin{barticle}[author]
\bauthor{\bsnm{Kley},~\bfnm{O.}\binits{O.}},
  \bauthor{\bsnm{Kl\"uppelberg},~\bfnm{C.}\binits{C.}} \AND
  \bauthor{\bsnm{Reinert},~\bfnm{G.}\binits{G.}}
(\byear{2018}).
\btitle{Conditional risk measures in a bipartite market structure}.
\bjournal{Scandinavian Actuarial Journal}
\bvolume{2018}
\bpages{328-355}.
\end{barticle}
\endbibitem

\bibitem[\protect\citeauthoryear{Lindskog, Resnick and
  Roy}{2014}]{lindskog:resnick:roy:2014}
\begin{barticle}[author]
\bauthor{\bsnm{Lindskog},~\bfnm{F.}\binits{F.}},
  \bauthor{\bsnm{Resnick},~\bfnm{S.~I.}\binits{S.~I.}} \AND
  \bauthor{\bsnm{Roy},~\bfnm{J.}\binits{J.}}
(\byear{2014}).
\btitle{Regularly varying measures on metric spaces: hidden regular variation
  and hidden jumps}.
\bjournal{Probability Surveys}
\bvolume{11}
\bpages{270--314}.
\end{barticle}
\endbibitem

\bibitem[\protect\citeauthoryear{Maulik, Resnick and
  Rootz{\'e}n}{2002}]{maulik:resnick:rootzen:2002}
\begin{barticle}[author]
\bauthor{\bsnm{Maulik},~\bfnm{K.}\binits{K.}},
  \bauthor{\bsnm{Resnick},~\bfnm{S.~I.}\binits{S.~I.}} \AND
  \bauthor{\bsnm{Rootz{\'e}n},~\bfnm{H.}\binits{H.}}
(\byear{2002}).
\btitle{Asymptotic independence and a network traffic model}.
\bjournal{J. Appl. Probab.}
\bvolume{39}
\bpages{671--699}.
\end{barticle}
\endbibitem

\bibitem[\protect\citeauthoryear{Maulik and
  Resnick}{2005}]{maulik:resnick:2005}
\begin{barticle}[author]
\bauthor{\bsnm{Maulik},~\bfnm{K.}\binits{K.}} \AND
  \bauthor{\bsnm{Resnick},~\bfnm{S.~I.}\binits{S.~I.}}
(\byear{2005}).
\btitle{Characterizations and examples of hidden regular variation}.
\bjournal{Extremes}
\bvolume{7}
\bpages{31--67}.
\end{barticle}
\endbibitem

\bibitem[\protect\citeauthoryear{Mitra and
  Resnick}{2011}]{mitra:resnick:2011hrv}
\begin{barticle}[author]
\bauthor{\bsnm{Mitra},~\bfnm{A.}\binits{A.}} \AND
  \bauthor{\bsnm{Resnick},~\bfnm{S.~I.}\binits{S.~I.}}
(\byear{2011}).
\btitle{Hidden regular variation and detection of hidden risks}.
\bjournal{Stochastic Models}
\bvolume{27}
\bpages{591--614}.
\end{barticle}
\endbibitem

\bibitem[\protect\citeauthoryear{Pratt}{1960}]{pratt:1960}
\begin{barticle}[author]
\bauthor{\bsnm{Pratt},~\bfnm{J.~W.}\binits{J.~W.}}
(\byear{1960}).
\btitle{On interchanging limits and integrals}.
\bjournal{Ann. Math. Statist.}
\bvolume{31}
\bpages{74--77}.
\end{barticle}
\endbibitem

\bibitem[\protect\citeauthoryear{Resnick}{2008}]{resnickbook:2008}
\begin{bbook}[author]
\bauthor{\bsnm{Resnick},~\bfnm{S.~I.}\binits{S.~I.}}
(\byear{2008}).
\btitle{Extreme Values, Regular Variation and Point Processes}.
\bseries{Springer Series in Operations Research and Financial Engineering}.
\bpublisher{Springer}, \baddress{New York}.
\bnote{Reprint of the 1987 original}.
\end{bbook}
\endbibitem

\bibitem[\protect\citeauthoryear{Rodr\'{\i}guez-Lallena and \'{U}beda
  Flores}{2004}]{rodriguesetal:2004}
\begin{barticle}[author]
\bauthor{\bsnm{Rodr\'{\i}guez-Lallena},~\bfnm{Jos\'{e}~Antonio}\binits{J.~A.}}
  \AND \bauthor{\bparticle{\'{U}beda} \bsnm{Flores},~\bfnm{Manuel}\binits{M.}}
(\byear{2004}).
\btitle{A new class of bivariate copulas}.
\bjournal{Statist. Probab. Lett.}
\bvolume{66}
\bpages{315--325}.
\end{barticle}
\endbibitem

\bibitem[\protect\citeauthoryear{R\"uschendorf}{2013}]{Ruschendorf}
\begin{bbook}[author]
\bauthor{\bsnm{R\"uschendorf},~\bfnm{L.}\binits{L.}}
(\byear{2013}).
\btitle{Mathematical Risk Analsys}.
\bpublisher{Springer}, \baddress{Heidelberg}.
\end{bbook}
\endbibitem

\bibitem[\protect\citeauthoryear{Tillier and
  Wintenberger}{2017}]{tillier:wintenberger:2018}
\begin{barticle}[author]
\bauthor{\bsnm{Tillier},~\bfnm{C.}\binits{C.}} \AND
  \bauthor{\bsnm{Wintenberger},~\bfnm{O.}\binits{O.}}
(\byear{2017}).
\btitle{Regular variation of a random length sequence of random variables and
  application to risk assessment}.
\bjournal{Extremes}
\bvolume{21}
\bpages{27--56}.
\end{barticle}
\endbibitem

\end{thebibliography}

\end{document}